\numberwithin{equation}{section}
\numberwithin{figure}{section}
\theoremstyle{plain}
\newtheorem*{lem*}{\protect\lemmaname}
\theoremstyle{plain}
\newtheorem*{prop*}{\protect\propositionname}
\providecommand{\lemmaname}{Lemma}
\providecommand{\propositionname}{Proposition}
\newtheorem{theorem}{Theorem}
\newtheorem{lemma}[theorem]{Lemma}
\newtheorem{proposition}[theorem]{Proposition}
\begin{document}

\title{Exact Sampling of Stationary and Time-Reversed Queues}
\author{Jose Blanchet and Aya Wallwater }
\affil{Columbia University}
\date{}
\maketitle
\begin{abstract}
We provide the first algorithm that, under minimal assumptions, allows to
simulate the stationary waiting-time sequence of a single-server queue
backwards in time, jointly with the input processes of the queue
(inter-arrival and service times). The single-server queue is useful in
applications of DCFTP (Dominated Coupling From The Past), which is a well
known protocol for simulation without bias from steady-state distributions.
Our algorithm terminates in finite time assuming only finite mean of the
inter-arrival and service times. In order to simulate the single-server
queue in stationarity until the first idle period in finite expected
termination time we require the existence of finite variance. This
requirement is also necessary for such idle time (which is a natural
coalescence time in DCFTP applications) to have finite mean. Thus, in this
sense, our algorithm is applicable under minimal assumptions.
\end{abstract}

\markboth{J. Blanchet and A. Wallwater}{Exact Sampling of Stationary and
Time-Reversed Queues}

\section{Introduction}

It is a pleasure to contribute to this special issue in honor of Professor
Don Iglehart, whose scientific contributions have had an enormous impact in
the applied probability and stochastic simulation communities. Professor
Iglehart research contributions expand areas such as steady-state simulation
and queueing analysis. We are glad, in this paper, to contribute to both of
these areas from the standpoint of exact (also known as perfect) simulation
theory, which aims at sampling without any bias from the steady-state
distribution of stochastic systems.

The theory of exact simulation has attracted substantial attention,
particularly since the ground breaking paper \cite{PW96}. In their paper,
the authors introduced the most popular sampling protocol for exact
simulation to date; namely, Coupling From The Past (CFTP). CFTP is a
simulation technique which results in samples from the steady-state
distribution of a Markov chain under certain compactness assumptions. The
paper \cite{Ken98} describes a useful variation of CFTP, called Dominated
CFTP (DCFTP). Like CFTP, DCFTP aims to sample from the steady-state
distribution of a Markov chain, but this technique can also be applied to
cases in which the state-space is unbounded.

The idea in the DCFTP method is to simulate a dominating stationary process
backwards in time until the detection of a so-called coalescence time, in
which the target and dominating processes coincide. The sample path of the
target process can then be reconstructed forward in time from coalescence up
to time zero. The state of the target process at time zero is a sample from
the associated stationary distribution.

Our contribution in this paper is to provide, under nearly minimal
assumptions (finite-mean service and inter-arrival times), an exact
simulation algorithm for the stationary workload of a single-server queue
backwards in time. This is a fundamental queueing system which can be used
in many applications as a natural dominating process when applying DCFTP.
Usually, additional assumptions, beyond the ones we consider here, have been
imposed to enable the simulation of the stationary single-server queue
backwards in time. For example, in \cite{Si11a,Si11b} the author takes
advantage of a single-server queue with Poisson arrivals for exact
simulation of a multi-server system; see also the recent work of \cite{CK14},
which dramatically improves the running time in \cite{Si11b},  but also requires the Poisson arrivals assumption.
In the paper 
\cite{BS11}, under the existence of a finite moment generating function for
the service times, the single-server queue, simulated backwards in time, is
used to sample from a general class of perpetuities. The paper \cite{BD12},
which builds upon the ideas in \cite{BS11}, also uses the single-server
queue backwards in time to sample the state descriptor of the infinite
server queue in stationarity; in turn, the infinite-server queue is used
to simulate loss networks in stationarity. 
Other example in which the
single-server queue arises as a natural dominating process occurs in the
setting of so-called multi-dimensional stochastic-fluid networks, see \cite{BC11}. Our contribution here allows to extend the applicability these
instances, in which the single-server queue has been used as a dominated
process under stronger assumptions than the ones we impose here. The
extensions are direct in most cases, the multi-server queue with general renewal arrivals requires the application of an additional coupling idea and
it is reported in \cite{BDP15}.

The first idle period (backwards in time starting from stationarity) is a
natural coalescence time when applying DCFTP. Therefore, we are specially
interested in an algorithm that has finite expected termination time to
simulate such first idle period. Moreover, it is well known that
finite-variance service times are necessary if the first idle period
(starting from stationarity) has finite expected time (this follows from
Wald's identity, \cite{Durrett} p. 178, and from Theorem 2.1 in \cite{As03},
p. 270). While our algorithm terminates with probability one imposing only
the existence of finite mean of service times and inter-arrival times, when
we assume finite variances we obtain an algorithm that has finite expected
running time (see Theorem \ref{Thm_MAIN} in Section \ref%
{sec:Implementation-of-the algorithm}).

Let us now provide the mathematical description of the problem we want to
solve. Consider a random walk $S_{n}=X_{1}+\ldots+X_{n}$ for $n\geq1$, and $%
S_{0}=0$. We assume that $(X_{k}:\,k\geq1)$ is a sequence of independent and
identically distributed (IID) random variables with 
\begin{equation}
\begin{array}{cccc}
EX_{k}=0 & \text{ and } & E\left\vert X_{k}\right\vert ^{\beta}<\infty & 
\text{ for some }\beta>1.%
\end{array}
\label{eq:assumption on X_k}
\end{equation}
As we indicated earlier, of special interest is the case $E\left\vert
X_{k}\right\vert ^{\beta}<\infty$ for some $\beta>2$. Now, for $\mu>0$ and $%
n\geq0$ we define the negative-drift random walk and its associated running
(forward) maximum by 
\begin{equation}
S_{n}\left( \mu\right) =S_{n}-n\mu\ \ \text{and}\ \ M_{n}=\max_{m\geq
n}\{S_{m}\left( \mu\right) -S_{n}\left( \mu\right) \},
\label{eq:drifted RW and Max}
\end{equation}
respectively. Note that the maximum is taken over an infinite time-horizon,
so the process $(M_{n}:n\geq0)$ is not adapted to the random walk $%
(S_{n}\left( \mu\right) :n\geq0)$. Our aim in this paper is to design an
algorithm that samples jointly from the sequence $\left( S_{n}\left(
\mu\right) ,\,M_{n}:\,0\leq n\leq N\right) $ for any finite $N$ (potentially
a stopping time adapted to $(S_{n}\left( \mu\right) ,M_{n}:n\geq0)$). Of
particular interest is the first idle time, $N=\min\{n\geq0:M_{n}=0\}$,
which can often be used as a coalescence time.

Note that if we define $W_{m}=M_{-m}$ for $m\leq0$, then we can easily
verify the so-called Lindley's recursion (see \cite{As03}, p. 92) namely 
\begin{equation}
M_{-m}=\left( M_{-m+1}+X_{-m}-\mu\right) ^{+}=\left(
W_{m-1}+X_{-m}-\mu\right) ^{+}=W_{m}, 
\end{equation}
and therefore $\left( W_{m}:m\leq0\right) $ corresponds to a single-server
queue waiting time sequence backwards in time; the sequence is clearly
stationary since the $M_{n}$'s are all equal in distribution. Simulating $%
(S_{n}\left( \mu\right) ,M_{n}:n\geq0)$ jointly allows to couple the single-server queue backwards in time with the driving sequence (i.e. the $X_{n}$%
's). Such coupling is required in the applications of the DCFTP method. 

The algorithm that we propose here extends previous work in \cite{EG00},
which shows how to simulate $M_{0}$ assuming the existence of the so-called
Cramer root (i.e. $\theta>0$ such that $E\left(\exp\left( \theta
X_{1}\right)\right) =1$). The paper \cite{BS11} explains how to simulate $%
(S_{n}\left( \mu\right) ,M_{n}:n\geq0)$ assuming a finite moment generating
function in a neighborhood of the origin. Multidimensional extensions, also
under the assumption of a finite moment generating function around the
origin, are discussed in \cite{BC11}.

\medskip

Our strategy for simulating the sequence $(S_{n}\left( \mu \right)
,M_{n}:n\geq 0)$ relies on certain \textquotedblleft upward
events\textquotedblright\ and \textquotedblleft downward
events\textquotedblright\ that occur at random times. These
\textquotedblleft milestone events\textquotedblright\ will be discussed in
Section \ref{sec:Construction S_n M_n}. In Section \ref{sec:Construction S_n
M_n} we will also present the high-level description of our proposed
algorithm, which will be elaborated in subsequent sections. Section \ref%
{Sec_Sampling_M0} explains \ how to simulate $M_{0}$ under the assumption
that $E\left\vert X_{k}\right\vert ^{\beta }<\infty $ for $\beta >2$. In
Section \ref{sec:Implementation-of-the algorithm} we built on our
construction for the sampling of $M_{0}$ to simulate the sequence $%
(S_{k}\left( \mu \right) ,M_{k}:k\leq n)$. Section \ref{Sec_Add_Cons} will
explain how to extend our algorithm to the case $E\left\vert
X_{k}\right\vert ^{\beta }<\infty $ for $\beta >1$ and also discuss
additional considerations involved in evaluating certain normalizing
constants. Finally, in Section \ref{Sec: Numerical} we will present a
numerical example that tests the empirical performance of our
proposed algorithm.


\section{Construction of $\left( S_{\lowercase{n}}\left( \protect\mu\right)
,M_{\lowercase{n}}:\,\lowercase{n}\geq0\right) $ via \textquotedblleft milestone
events\textquotedblright}

\label{sec:Construction S_n M_n}

We will describe the construction of a pair of sequences of stopping times
(with respect to the filtration generated by $(S_{n}\left(\mu\right):n\geq0)$), denoted by $%
(D_{n}:n\geq0)$ and $(U_{n}:n\geq1)$, which track certain downward and
upward milestones in the evolution of $\left( S_{n}\left( \mu\right)
:\,n\geq0\right) $. We follow similar steps as described in \cite{BS11}.
These \textquotedblleft milestone events\textquotedblright\ will be used in
the design of our proposed algorithm. The elements of the two stopping times
sequences interlace with each other (when finite) and their precise description follows next.



\begin{SCfigure}
\label{fig:Construction of stopping times}%
   \centering
    \includegraphics[width=0.5\textwidth]%
    {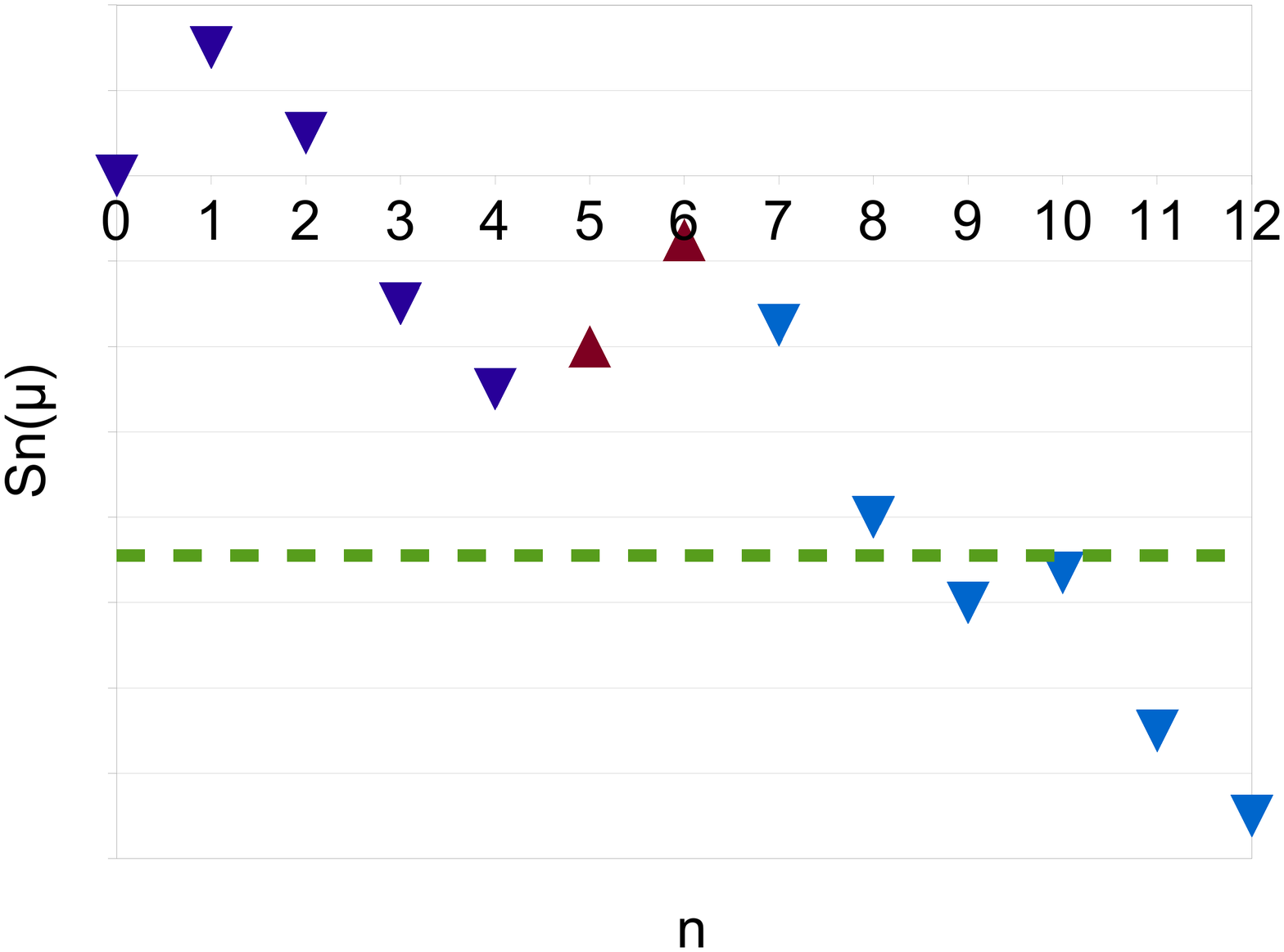}
    \vspace{-27pt}

    \caption{ \textbf{Figure \ref{fig:Construction of stopping times}}

   Figure
  \ref{fig:Construction of stopping times} illustrates a sample path $\left\lbrace S_n\left(\mu\right): \, 0\leq n\leq 12\right\rbrace$.
  If we set $m=1$ and $L=2$  then the corresponding stopping times  are $D_{1}=4$, $U_{1}=6$, $D_{2}=9$. 
  If in addition $U_{2}=\infty$, then $S_{n}\left(\mu\right)$ stays below the bold dashed line for all $n\geq D_2$. 
  Following Proposition \ref{pro:behavior of Delta_n and Gamma_n} we can now evaluate $M_n$ satisfying 
  $\lbrace M_n: n\leq 9, \, 
  S_n\left(\mu\right)\geq 
  S_9\left(\mu\right)+1 \rbrace
  $
  In this example, at time $t=D_2=9$ the values of $\left\lbrace M_n:0\leq n\leq 7\right\rbrace$ can be calculated and
  we can update $C_{UB}\leftarrow S_{D_{2}%
  }\left(  \mu\right)  +1$.
  Notice that $S_8\left(\mu\right)\leq S_9\left(\mu\right)+1$ and therefore, in order to determine $M_8$ we need to keep on tracking the path until the next time we spot $U_n =\infty$.
   }
\end{SCfigure}

We start by fixing any $m>0$, $L\geq 1$. Eventually we will choose $m$ as
small as possible subject to certain constraints described in Section \ref%
{Sec_Sampling_M0}, and then we can choose $L$ as small as possible to satisfy

\begin{equation}
P\left( m<M_{0}\leq (L+1)m\right) >0.  \label{C_m_0}
\end{equation}%
Typically, $L=1$ is feasible. This constraint on $L$ will be used in the
proof of Proposition \ref{pro:behavior of Delta_n and Gamma_n} and also in
the implementation of Step 2 in Procedure \ref{Algo Hueristics for S_n M_n}
below.

Now set $D_{0}=0$. We observe the evolution of the process $(S_{n}\left( \mu
\right) :\,n\geq 0)$ and detect the time $D_{1}$ (the first downward
milestone),
\[
D_{1}=\inf \left\{ n\geq D_{0}:\,S_{n}(\mu )<-Lm\right\} .
\]

Once $D_{1}$ is detected we check whether or not $\left\{ S_{n}\left(
\mu\right) :\,n\geq D_{1}\right\} $ ever goes above the height $%
S_{D_{1}}\left( \mu\right) +m$ (the first upward milestone); namely we
define
\[
U_{1}=\inf\left\{ n\geq D_{1}:\,S_{n}(\mu)>m+S_{D_{1}}\left( \mu\right)
\right\}
\]

For now let us assume that we can check if $U_{1}=\infty$ or $U_{1}<\infty$
(how exactly to do so will be explained in Section \ref{Sec_Sampling_M0}).
To continue simulating the rest of the path, namely $\left\{ S_{n}\left(
\mu\right) :\,n>D_{1}\right\} $, we potentially need to keep track of the
conditional upper bound implied by the fact that $U_{1}=\infty$. To this
end, we introduce the conditional upper bound variable $C_{UB}$ (initially $%
C_{UB}=\infty$). If at time $D_{1}$ we detect that $U_{1}=\infty$, then we
set $C_{UB}=S_{D_{1}}\left( \mu\right) +m$ and continue sampling the path of
the random walk conditional on never crossing the upper bound $%
S_{D_1}\left(\mu\right)+m$, that is, conditional on $\left\{ S_{n}\left(
\mu\right) <C_{UB}:\,n>D_{1}\right\} $. Otherwise, if $U_{1}<\infty$, we
simulate the path conditional on $U_{1}<\infty$, until we detect the time $%
U_{1}$. We continue on sequentially checking whenever a downward or an
upward milestone is crossed as follows: For $j\geq2$, define

\begin{equation}
\begin{array}{l}
D_{j}=\inf\left\{ n\geq U_{j-1}I\left( U_{j-1}<\infty\right) \vee
D_{j-1}:\,S_{n}\left( \mu\right) <S_{D_{j-1}}\left( \mu\right) -Lm\right\}
\\
U_{j}=\inf\left\{ n\geq D_{j}:\,S_{n}\left( \mu\right) -S_{D_{j}}\left(
\mu\right) >m\right\} ,%
\end{array}
\label{eq:construction of D_j and U_j}
\end{equation}
with the convention that if $U_{j-1}=\infty$, then $U_{j-1}I\left(
U_{j-1}<\infty\right) =0$. Therefore, we have that $U_{j-1}I\left(
U_{j-1}<\infty\right) >D_{j-1}$ if and only if $U_{j-1}<\infty$.

Let us define
\begin{equation}
\Delta=\inf\{D_{n}:U_{n}=\infty,n\geq1\}.  \label{eq def Delta}
\end{equation}
So, for example, if $U_{1}=\infty$ we have that $\Delta=D_{1}$ and the
drifted random walk will never reach level $S_{D_{1}}\left( \mu\right) +m$
again. This allows us to evaluate $M_{0}$ by computing
\begin{equation}
M_{0}=\max\left\{ S_{n}\left( \mu\right) :\,0\leq n\leq\Delta\right\} .
\label{Eq_EVAL_M0}
\end{equation}

Similarly, the event $U_{j}=\infty$, for some $j\geq1$, implies that the
level $S_{D_{j}}\left( \mu\right) +m$ is never crossed for all $n\geq D_{j}$%
, and we let $C_{UB}=S_{D_{j}}\left( \mu\right) +m$. The value of $C_{UB}$
keeps updating as the random walk evolves, at times where $U_{j}=\infty$.

The advantage of considering these stopping times is the following: once we
observed that some $U_{j}=\infty$, the values of $\left\{ M_{n}:\,n\leq
D_{j},S_{n}\left( \mu\right) \geq S_{D_{j}}\left( \mu\right)+m\right\} $ are
known without a need of further simulation. A detailed example is
illustrated in Figure \ref{fig:Construction of stopping times}.

Before we summarize the properties of the stopping times $D_n$'s and $U_n $%
's it will be useful to introduce the following. For any $a$ and $b>0$  let

\begin{equation}
\begin{array}{lc}
T_{b}  =\inf\left\{ n\geq0:\,S_{n}-\mu n>b\right\} , &  \\
T_{-b}  =\inf\left\{ n\geq0:\,S_{n}-\mu n<-b\right\}, & \\
P_a\left(\cdot\right)=P\left(\cdot \, \mid S_0=a\right).
\end{array}
\label{eq: Crossig times T_m T_(-m)}
\end{equation}

\medskip{}

\begin{proposition}
\label{pro:behavior of Delta_n and Gamma_n} Set $D_{0}=0$ and let $%
(D_{n}:\,n\geq1)$ and $(U_{n}:\,n\geq1)$ be as (\ref{eq:construction of D_j
and U_j}). We have that
\begin{equation}
\begin{array}{cccc}
P_{0}\left( \lim_{n\rightarrow\infty}D_{n}=\infty\right) =1 & \text{ and } &
P_{0}\left( D_{n}<\infty\right) =1,\, & \forall n\geq1.%
\end{array}
\label{eq:behavior of D_n}
\end{equation}
Furthermore,
\begin{equation}
P_{0}\left( U_{n}=\infty,\,\mathnormal{i.o.}\,\right) =1.
\label{eq: behavior U_n io as}
\end{equation}
\end{proposition}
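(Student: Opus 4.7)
The two statements are essentially separate: the $D_n$ facts follow from the negative-drift assumption together with the strong Markov property, while the statement about $U_n$ amounts to a Borel--Cantelli argument where the only issue is ensuring I have the right conditional independence.

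\textbf{Part 1 ($D_n<\infty$ a.s.\ and $D_n\to\infty$).} The plan is to use the strong law of large numbers, which gives $S_n(\mu)/n \to -\mu<0$ and hence $S_n(\mu)\to -\infty$ a.s. In particular, for every $b>0$ the downward crossing time $T_{-b}$ defined in \eqref{eq: Crossig times T_m T_(-m)} is finite almost surely. I would then argue by induction on $j$: assuming $D_{j-1}<\infty$ (and, on $\{U_{j-1}<\infty\}$, also $U_{j-1}<\infty$), the post-$(U_{j-1}I(U_{j-1}<\infty)\vee D_{j-1})$ increments of $S_n(\mu)$ form, by the strong Markov property, an independent copy of the drifted random walk, and this copy too goes to $-\infty$ a.s. Hence it crosses the level $S_{D_{j-1}}(\mu)-Lm$ in finite time, giving $D_j<\infty$ a.s. Finally, since by construction $S_{D_j}(\mu) < S_{D_{j-1}}(\mu)-Lm < S_{D_{j-1}}(\mu)$, the integer-valued sequence $(D_n)$ is strictly increasing, so $D_n\ge n\to\infty$.

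\textbf{Part 2 ($U_n=\infty$ i.o.).} Set $q:=P_0(T_m=\infty)$. Since $S_n(\mu)\to-\infty$ a.s., the global maximum $\max_{n\ge 0}S_n(\mu)$ is a.s.\ finite, so $q=P_0(\max_{n\ge 0}S_n(\mu)\le m)>0$. By the strong Markov property applied at the stopping time $D_n$, the event $\{U_n=\infty\}$ depends only on the shifted increments $(S_{D_n+k}(\mu)-S_{D_n}(\mu))_{k\ge 0}$, which are independent of $\mathcal{F}_{D_n}$ and distributed as the original walk; hence
\[
P(U_n=\infty\mid\mathcal{F}_{D_n})=q\quad\text{a.s.}
\]
Writing $\xi_n:=I(U_n<\infty)$, I want to show $P(\xi_n=1\text{ eventually})=0$. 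The key observation is that on $\{U_j<\infty\}$ one automatically has $U_j\le D_{j+1}$ by the definition \eqref{eq:construction of D_j and U_j}, so the event $\{\xi_j=1\}=\{U_j\le D_{j+1}\}$ is $\mathcal{F}_{D_{j+1}}$-measurable. This lets me condition cleanly: for every $M$ and $k\ge 0$,
\[
P(\xi_M=\cdots=\xi_{M+k}=1) = E\Bigl[I(\xi_M=\cdots=\xi_{M+k-1}=1)\,P(\xi_{M+k}=1\mid\mathcal{F}_{D_{M+k}})\Bigr]=(1-q)\,P(\xi_M=\cdots=\xi_{M+k-1}=1),
\]
so by induction the joint probability equals $(1-q)^{k+1}\to 0$. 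Therefore $P(\bigcap_{n\ge M}\{U_n<\infty\})=0$ for every $M$, and a countable union gives $P(U_n=\infty\text{ i.o.})=1$.

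\textbf{Main obstacle.} The nontrivial step is justifying the ``independent Bernoulli'' structure used in Part 2. The events $\{U_n=\infty\}$ are infinite-horizon tail events and are not obviously adapted to the natural filtration $(\mathcal{F}_{D_n})$, which at first sight blocks a direct application of the conditional Borel--Cantelli lemma. The resolution is the simple but essential observation that the \emph{complementary} event $\{U_j<\infty\}$ is a finite-horizon event that is completed by time $D_{j+1}$; this measurability, combined with strong Markov at $D_n$, gives the clean recursion $P(\xi_M=\cdots=\xi_{M+k}=1)=(1-q)^{k+1}$ without ever needing true independence of the $\xi_n$.
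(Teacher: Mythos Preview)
Your Part 1 is fine. In Part 2 there is a genuine gap: the assertion $P(U_n=\infty\mid\mathcal F_{D_n})=q$ rests on the strong Markov property at $D_n$, but for $n\ge 2$ the random time $D_n$ is \emph{not} a stopping time for the natural filtration of the walk. Its very definition branches on the tail event $\{U_{n-1}=\infty\}$; for instance, if after $D_{n-1}$ the walk first drops below $S_{D_{n-1}}(\mu)-Lm$ at some time $\tau$ without yet having exceeded $S_{D_{n-1}}(\mu)+m$, then whether $D_n=\tau$ cannot be decided at time $\tau$. Consequently the post-$D_n$ increments are not an unconditional fresh copy of the walk: on the event that $U_{l}=\infty$ for some $l<n$, the path after $D_n$ is constrained never to exceed level $S_{D_{l}}(\mu)+m$, and the correct conditional probability is $P_0(T_m=\infty\mid T_r=\infty)$ for a suitable $r>m$, which is not equal to $q$. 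Your identity $P(\xi_M=\cdots=\xi_{M+k}=1)=(1-q)^{k+1}$ therefore fails for $M\ge 2$, and the $M=1$ case alone does not yield the ``infinitely often'' conclusion. Your finite-horizon observation about $\{U_j<\infty\}$ does not rescue this, since $D_{j+1}$ itself is defined through the infinite-horizon event $\{U_j=\infty\}$.

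The paper's remedy is to condition on the richer $\sigma$-field $\sigma(S_1,\ldots,S_{D_j},U_1,\ldots,U_{j-1})$, which absorbs the tail information, and to replace your equality by the one-sided bound
\[
P_0\bigl(U_j=\infty\mid S_1,\ldots,S_{D_j},U_1,\ldots,U_{j-1}\bigr)=P_0(T_m=\infty\mid T_r=\infty)=\frac{P(M_0\le m)}{P(M_0\le r)}\ge P(M_0=0)=:\delta>0.
\]
With this inequality in hand your recursion becomes $P(\xi_M=\cdots=\xi_{M+k}=1)\le(1-\delta)^{k+1}\to 0$ for every $M$, and the Borel--Cantelli step then goes through exactly as you outlined.
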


\medskip{}

\begin{proof}
The statement in (\ref{eq:behavior of D_n}) follows easily from the Law of
Large Numbers since $ES_{1}\left( \mu \right) =-\mu <0$. 
Now we will verify that $%
P_{0}\left( U_{n}=\infty ,\,\mathnormal{i.o.}\,\right) =1$. Recall that $%
U_{1}$ was defined by $U_{1}=\inf \left\{ n\geq D_{1}:\,S_{n}(\mu
)-S_{D_{1}}\left( \mu \right) >m\right\} $. Therefore, since $ES_{1}(\mu )<0$%
, for all $m\geq 0$ we have (see \cite{As03} p. 224),
\[
\begin{array}{c}
P_{0}\left( U_{1}=\infty |S_{1},...,S_{D_{1}}\right) =P_{0}\left(
T_{m}=\infty \right) =P\left( M_{0}\leq m\right) \geq P\left( M_{0}=0\right)
>0.%
\end{array}%
\]%
Our next goal is to show that for $j\geq 2$ we can find $\delta >0$ such
that
\[
P_{0}\left( U_{j}=\infty |S_{1},...,S_{D_{j}},U_{1},...,U_{j-1}\right) \geq
\delta >0.
\]%
Suppose first that $U_{l}<\infty $ for each $l=1,2,...,j-1$. Then, by the
strong Markov property we have that
\[
P_{0}\left( U_{j}=\infty |S_{1},...,S_{D_{j}},U_{1},...,U_{j-1}\right)
=P_{0}\left( T_{m}=\infty \right) \geq P\left( M_{0}=0\right) >0.
\]%
Now suppose that $U_{l}=\infty $ for some $l\leq j-1$ and let $l^{\ast
}=\max \left\{ l\leq j-1:\,U_{l}=\infty \right\} $. Define $r=S_{D_{l^{\ast
}}}\left( \mu \right) +m-S_{D_{j}}\left( \mu \right) \geq (L+1)m$. Note that
\begin{equation}
P_{0}\left( U_{j}=\infty |S_{1},...,S_{D_{j}},U_{1},...,U_{j-1}\right)
=P_{0}\left( T_{m}=\infty |T_{r}=\infty \right) .
\label{eq: Pr T_m finite given T_M infinite}
\end{equation}%
Keep in mind that the right hand side of (\ref{eq: Pr T_m finite given T_M
infinite}) regards $r$ as a deterministic constant and note that
\begin{equation}
P_{0}\left( T_{m}=\infty |T_{r}=\infty \right) =P_{0}\left( M_{0}\leq
m|M_{0}\leq r\right) \geq \frac{P_{0}\left( M_{0}=0\right) }{P\left(
M_{0}\leq r\right) }\geq P_{0}\left( M_{0}=0\right) >0
\label{Good_Lower_Bound_p}
\end{equation}%
Hence, we conclude that%
\[
\begin{array}{c}
P_{0}\left( U_{j}=\infty |S_{1},...,S_{D_{j}},U_{1},...,U_{j-1}\right) \geq
P\left( M_{0}=0\right) :=\delta >0%
\end{array}%
.
\]%
It then follows by the Borel-Cantelli lemma that $P_{0}\left( U_{n}=\infty
,\,\text{\textnormal{i.o.}}\,\right) =1$.
\end{proof}

\medskip

In the setting of Proposition \ref{pro:behavior of Delta_n and Gamma_n}, for
each $k\geq0$ we can define $N_{0}\left( k\right) =\inf\left\{
n\geq1:\,D_{n}\geq k\right\} $ and $\mathcal{T}\left( k\right) =\inf\left\{
j\geq N_{0}\left( k\right) +1:\,U_{j}=\infty\right\} $, both finite random
variables such that%
\begin{equation}
M_{k}=-S_{k}\left(\mu\right)+ \max\lbrace S_{n}\left(
\mu\right):\,k\leq n\leq D_{\mathcal{T}\left( k\right)} \rbrace
\label{eq:connection M_n and stopping times}
\end{equation}


In words, $D_{\mathcal{T}\left( k\right) }$ is the time, not earlier than $k$%
, at which we detect a second unsuccessful attempt at building an upward\
patch directly. The fact that the relation in (\ref{eq:connection M_n and
stopping times}) holds, follows easily by construction of the stopping times
in (\ref{eq:construction of D_j and U_j}). Note that it is important,
however, to define $\mathcal{T}\left( k\right) \geq N_{0}\left( k\right) +1$
so that $D_{N_{0}\left( k\right) +1}$ is computed first. That way we can
make sure that the maximum of the sequence $\left( S_{n}\left( \mu\right)
:\,n\geq k\right) $ is achieved between $k$ and $D_{\mathcal{T}\left(
k\right) }$ (see Figure \ref{fig:Construction of stopping times}).

\medskip{}

Proposition \ref{pro:behavior of Delta_n and Gamma_n} ensures that it
suffices to sequentially simulate $(D_{n}:n\geq0)$ and $(U_{n}:n\geq1)$
jointly with the underlying random walk in order to sample from the sequence
$\left( S_{n}\left( \mu\right) ,\,M_{n}:\,n\geq0\right) $. This observation
gives rise to our suggested scheme. The procedure sequentially constructs
the random walk in the intervals $\left[ D_{n-1},D_{n}\right) $ for $n\geq1$%
. Here is the\textbf{\ }high-level procedure to construct $\left(
S_{n}\left( \mu\right) ,\,M_{n}:n\geq0\right) $:

\begin{figure}[tbp]
        \begin{minipage}[b]{0.43\textwidth}
                \includegraphics[scale=0.217]{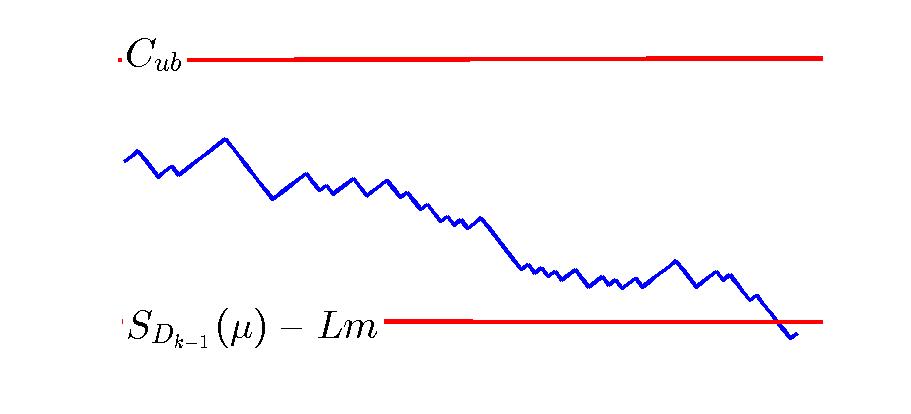}
                
        \begin{center} a. Step 1.
        \end{center}       
        \end{minipage}
        \begin{minipage}[b]{0.43\textwidth}
        \includegraphics[scale=0.217]{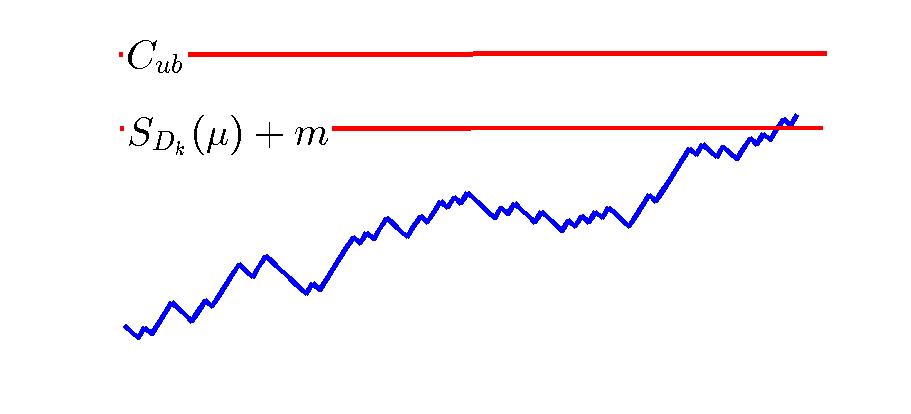}
        
        \begin{center}b. Step 2.
        \end{center}
        \end{minipage}
        \caption{High-level description of the algorithm}
\label{fig:Heuristics of algorithm}%
\end{figure}



\begin{procedure}[H]
\caption{1 () Milestone Construction of $\left(S_{n}\left(\mu \right),\, M_{n}: \,n\geq 0 \right)$ (see Figure \ref{fig:Heuristics of algorithm})}
\label{Algo Hueristics for S_n M_n}
At $k$th iteration, $k\geq1$: \\
{\bf {Step 1:  ``downward patch". } } Conditional on the path
not crossing $C_{UB}$ we simulate the path until we detect $D_{k}\,-$
the first time the path crosses the level $S_{D_{k-1}}(\mu)-Lm$ \label{Hueristics Step_1}
(see Figure \ref{fig:Heuristics of algorithm}a).\\
\smallskip{}
{\bf {Step 2: ``upward patch". }}  Check whether or not the
level $S_{D_{k}}(\mu)+m$ is ever crossed. That is, whether $U_{k}<\infty$
or not. If the answer is ``Yes'', then conditional on the path crossing
the level $S_{D_{k}}(\mu)+m$ but not crossing the level $C_{UB}$ we simulate
the path until we detect $U_{k}$, the first time the level $S_{D_{k}}(\mu)+m$
is crossed\label{Hueristics Step_2}
(see Figure \ref{fig:Heuristics of algorithm}b).
Otherwise $\left(U_{j}=\infty \right)$, and we can update $C_{UB}$: $C_{UB}\leftarrow S_{D_{j}}(\mu)+m$
\end{procedure}

\bigskip{}

The implementation of the steps in Procedure \ref{Algo Hueristics for S_n
M_n} will be discussed in detail in the next sections, culminating with the
precise description given in Algorithm \ref{algo last} at the end of
Subsection \ref{Sub_Algo_last}. The following result summarizes the main
contribution of this paper. The development in the next sections provides
the proof of this result, which will ultimately be given after the
description of Algorithm \ref{algo last}. 

Throughout the rest of the paper
a function evaluation is considered to be any of the following operations:
evaluation of a sum, a product, the exponential of a number, the underlying
increment distribution at a given point, the simulation of a uniform number, and the simulation of a single increment conditioned on lying on a given interval.

\begin{theorem}
\label{Thm_MAIN}Suppose that $E\left\vert X_{k}\right\vert ^{\beta}<\infty$
for some $\beta>1$. If $m>0$ is suitably chosen (see Subsection \ref%
{Sub_Select_m}) then for each $n\geq0$ deterministic it is possible to
simulate exactly the sequence $(D_{j}:0\leq j\leq n)$ and $(U_{j}:0\leq
j\leq n)$ jointly with $(S_{j}\left( \mu\right) :j\leq n)$ and therefore
(given our previous discussion on the evaluation of $M_{k}$), the sequence $%
(S_{k}\left( \mu\right) ,M_{k}:0\leq k\leq n)$.

Moreover, if $\beta>2$, the
expected number of function evaluations required to simulate $(S_{k}\left(
\mu\right) ,M_{k}:0\leq k\leq n)$ is finite. In particular, since $EN<\infty$
for $N=\inf\{k\geq0:M_{k}=0\}$, the expected running time to simulate $%
(S_{k}\left( \mu\right) ,M_{k}:0\leq k\leq N)$ is also finite.
\end{theorem}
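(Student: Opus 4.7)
My plan is to deduce Theorem~\ref{Thm_MAIN} by combining Procedure~\ref{Algo Hueristics for S_n M_n} with the exact-sampling subroutines to be built in Sections~\ref{Sec_Sampling_M0}--\ref{Sec_Add_Cons}, then to bound the expected number of function evaluations by means of Wald's identity and the uniform lower bound $P_0(U_k=\infty\mid\cdots)\ge \delta:=P(M_0=0)>0$ established in the proof of Proposition~\ref{pro:behavior of Delta_n and Gamma_n}. Throughout, the algorithm maintains $C_{UB}$ as part of its state and works with the conditional law of the random walk given $\{S_j(\mu)<C_{UB}:j>D_k\}$.

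For the first (exactness) assertion I would treat two subroutines as black boxes supplied by the later sections: (i) a Bernoulli sampler that decides whether $U_k=\infty$, i.e.\ returns $I(T_m=\infty)$ under the appropriate conditional law; and (ii) an increment sampler that, conditional on the outcome of (i) and on the path not exceeding $C_{UB}$, generates the trajectory up to $D_k$ or $U_k$. Granted (i) and (ii), Procedure~\ref{Algo Hueristics for S_n M_n} outputs the joint distribution of $(S_j(\mu),D_j,U_j)_{j\ge 1}$ by construction. Proposition~\ref{pro:behavior of Delta_n and Gamma_n} guarantees $\mathcal{T}(n)<\infty$ almost surely, so the loop terminates a.s., and the representation~(\ref{eq:connection M_n and stopping times}) recovers every $M_k$ with $0\le k\le n$ exactly. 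This handles the first assertion under $\beta>1$.

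For the expected-cost bound under $\beta>2$ I would decompose the work into the number of milestone cycles executed and the average cost per cycle. Since $D_j\ge j$ deterministically we have $N_0(n)\le n$, while by~(\ref{Good_Lower_Bound_p}) the number of cycles after $N_0(n)$ until $\mathcal{T}(n)$ is stochastically dominated by a $\mathrm{Geometric}(\delta)$ variable; hence $E\mathcal{T}(n)<\infty$. Within each cycle, Wald's identity applied to the negative-drift walk gives $ET_{-Lm}=O(m/\mu)$, bounding the expected length of every downward patch; for the upward patch and the Bernoulli decision on $U_k=\infty$, the acceptance--rejection schemes of Sections~\ref{Sec_Sampling_M0} and~\ref{Sec_Add_Cons} are designed to have finite expected number of function evaluations precisely because $E|X_k|^\beta<\infty$ for some $\beta>2$ supplies the second-moment control on $T_{\pm m}$ that is needed to bound their rejection ratios. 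A further application of Wald's identity on the random number of cycles yields the overall $O(n)$ bound. The closing statement about $N$ follows because, as noted in the introduction, $EN<\infty$ under $EX_k^2<\infty$ (Wald's identity together with Theorem~2.1 of~\cite{As03}, p.~270), so one last Wald step on the per-cycle costs completes the argument. The principal obstacle is the analysis underlying~(i) and~(ii): quantifying the rejection cost of sampling under the conditional laws $\{T_m=\infty\}$ and $\{\max_{j\ge 0} S_j(\mu)<C_{UB}\}$, which is the content of the subsequent sections and is where the moment gap between $\beta>1$ and $\beta>2$ manifests itself.
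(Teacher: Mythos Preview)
Your proposal is correct and follows essentially the same architecture as the paper's proof: defer the exactness and per-call cost of the Bernoulli and patch-sampling subroutines to Sections~\ref{Sec_Sampling_M0}--\ref{Sec_Add_Cons}, invoke Proposition~\ref{pro:behavior of Delta_n and Gamma_n} and~(\ref{eq:connection M_n and stopping times}) for a.s.\ termination and recovery of $M_k$, then combine the uniform geometric bound~(\ref{Good_Lower_Bound_p}) on the number of cycles with Wald's identity on the per-cycle costs.

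Two small points where the paper is slightly more concrete than your sketch. First, the mechanism by which $\beta>2$ enters is not ``second-moment control on $T_{\pm m}$'' per se; rather, $\beta>2$ permits the choice $\alpha>2$ in~(\ref{Cond_Alpha}), which is exactly what makes $\sum_k n_k g(k)<\infty$ and hence $E_0[T_m I(T_m<\infty)]<\infty$. Second, the paper explicitly bounds the \emph{conditional} upward-patch length $E_0(T_m\mid T_m<\infty,\,T_\sigma=\infty)$ by $E_0[T_m I(T_m<\infty)]/P_0(m<M_0\le\sigma)$, which is finite thanks to assumption~(\ref{C_m_0}); your treatment of the conditional (on $C_{UB}$) patch lengths is left to the black boxes, which is fine, but this is the specific inequality the paper uses. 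Otherwise the logic and the order of the argument match.
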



\section{Sampling $M_0$ jointly with $\left( S_{1}\left( \protect\mu\right)
,...,S_{\Delta}\left( \protect\mu\right) \right) $}

\label{Sec_Sampling_M0}

The goal of this section is to sample exactly from the steady-state
distribution of the single-server queue, namely $M_{0}$. To this end we need
to simulate the sample path up to the first $U_{j}$ such that $U_{j}=\infty$ (recall that $\Delta$ was defined to be the corresponding $D_j$). 
This sample path will be used in the construction of further steps in
Procedure \ref{Algo Hueristics for S_n M_n}.

Throughout this section, in order to simplify the exposition, we will assume
that $E\left\vert X_{k}\right\vert ^{2+\varepsilon}<\infty$ (i.e. $%
\beta=2+\varepsilon$). This will allow us to conclude that our algorithm has
finite expected termination time. We will discuss the case $E\left\vert
X_{k}\right\vert ^{1+\varepsilon}<\infty$ only (for $\varepsilon\in(0,1)$)
in Section \ref{Sec_Add_Cons} for completeness, but in such case the
algorithm may take infinite expected time to terminate.

Let us recall the definition of the crossing stopping times $T_{b}$ and $%
T_{-b}$, for $b>0$, introduced in (\ref{eq: Crossig times T_m T_(-m)}).
Since we concentrate on $M_{0}$, we have that $C_{UB}=\infty$. We first need
to explain a procedure to generate a Bernoulli random variable with success
parameter $P_{0}\left( T_{m}<\infty\right) $, for suitably chosen $m>0$.
Also, this procedure, as we shall see will allow us to simulate $\left(
S_{1}\left( \mu\right) ,...,S_{T_{m}}\left( \mu\right) \right) $ given that $%
T_{m}<\infty$.

\subsection{Sampling $Ber\left( P_{0}\left( T_{m}<\infty\right) \right) $
and $\left( S_{1}\left( \protect\mu\right) ,...,S_{T_{m}}\left( \protect\mu%
\right) \right) $ given $T_{m}<\infty$}

\label{Sampling_Up_Path}

Let us denote by $J$  a Bernoulli random variable with success
parameter $P_{0}\left( T_{m}<\infty\right) $. The constant $m>0$ will be
selected below in Subsection \ref{Sub_Select_m}. There are several ways of
sampling $J$, we use a strategy similar to that considered in \cite{MJB13},
in connection to a different sampling problem.

In order to sample $J$ we first introduce a partition on the natural numbers
(i.e. the positive time line on the lattice) as follows.
Let
\begin{equation}
\begin{array}{cc}
n_{k}=2^{k-1}, & k=1,2,\ldots%
\end{array}
.  \label{eq:Step_2  natural numbers partition}
\end{equation}
This sequence define a partition of the natural numbers via the sets $%
[n_{k-1},n_{k}-1]$ for $k=2,3,...$. Now, for $k=2,3,\ldots$ we consider the
sets

\begin{equation}
\begin{array}{l}
A_{k}=\bigcup\limits_{j=n_{k-1}}^{n_{k}-1}\left\{ X_{j}>\left( \mu
j+m\right) ^{1-\delta }\right\} \\
B_{k}=\bigcap\limits_{j=1}^{n_{k}-1}\left\{ X_{j}\leq \left( \mu
n_{k-1}+m\right) ^{1-\delta }\right\} \vspace{0.15cm} \\
A_{k}^{c}\cap B_{k}^{c} \\
\end{array}
\label{eq:Step2 sets Ak Bk Ck}
\end{equation}%
for some $\delta \in (0,1/2]$, also to be selected.

First, the algorithm samples the random variable $K\geq 2$, which has
probability mass function $g(\cdot )$ that will be specified later. The
random variable $K$ relates to the partition on the natural numbers that was
induced by (\ref{eq:Step_2 natural numbers partition}) and $K=k$ will
eventually imply that $T_{m}\in \lbrack n_{k-1},n_{k}-1]$. Given $K=k$, the
algorithm then proposes a walk $\left( S_{1}\left( \mu \right) ,\ldots
,S_{n_{k}-1}\left( \mu \right) \right) $ via conditioning on one of three
possible events described in terms of $A_{k},\,B_{k}\cap A_{k}^{c}$ and $%
A_{k}^{c}\cap B_{k}^{c}$ with equal probability (i.e. $1/3$ each).
Conditioning on $A_{k}$ and $A_{k}^{c}\cap B_{k}^{c}$ will be handled using
a mixtures based on individual large-jum-events of the form $\{X_{j}>\left(
\mu j+m\right) ^{1-\delta }\}$. Conditioning on $B_{k}$ will be handled
using an exponential tilting of the distribution of $X_{j}$ given that $%
\{X_{j}<\left( \mu j+m\right) ^{1-\delta }\}$. The tilting parameter will be
selected via
\begin{equation}
\theta _{k}=\gamma /\left( n_{k-1}\mu +m\right) ,  \label{eq:sel_tilting_1}
\end{equation}%
for some $\gamma >0$.

In order to describe all of these conditional sampling procedures we need to
provide some definitions and state auxiliary lemmas which will be proved in
the appendix.

We will start by specifying the probability mass function $\{g\left(
k\right) ,\,k\geq 2\mathbb{\}}$. Consider $Y$, a Pareto distributed random
variable with some regularly varying index $\alpha >0$, namely,
\[
P\left( Y>y\right) =\frac{1}{\left( 1+y\right) ^{\alpha }},
\]%
for $y\geq 0$. Conditions on $\alpha >0$ will be imposed below. Let
\[
\bar{G}\left( t\right) =\int\limits_{t}^{\infty }P\left( Y>s\right) ds
\]%
Then we set for $k=2,3,\ldots $
\begin{equation}
g\left( k\right) =P\left( K=k\right) =\frac{\bar{G}\left( m+\mu
n_{k-1}\right) -\bar{G}\left( m+\mu n_{k}\right) }{\bar{G}\left( m+\mu
n_{1}\right) }.  \label{def of pmf g}
\end{equation}%
Let us impose conditions on $\delta ,\alpha ,m$ and $\gamma $ that will be
assumed for the implementation of the algorithm.

\subsubsection{Assumptions imposed on the parameters $\protect\delta $, $%
\protect\alpha $, $m$ and $\protect\gamma $\label{Sub_Select_m}}

In addition to $\delta \in (0,1/2]$, and (\ref{C_m_0}), assume that $m\geq 1$
is selected large enough so that
\begin{equation}
\frac{E\left( X^{2}\right) }{m^{2\left( 1-\delta \right) }}\leq \frac{1}{2},
\label{C_m_3}
\end{equation}%
and that the following inequalities hold:%
\begin{eqnarray}
\sup_{z\in \mu \cdot \{2^{k}:k\geq 0\}}\frac{6\left( 1+2z+m\right) ^{\alpha
}P\left( X>\left( z+m\right) ^{1-\delta }\right) }{\left( \alpha -1\right)
\left( m+1\right) ^{\alpha -1}\mu } &\leq &1,  \label{C_I_1} \\
\sup_{z\in \mu \cdot \{2^{k}:k\geq 0\}}\frac{\exp \left( -\gamma \left(
m+z\right) ^{\delta }+\frac{\gamma ^{2}e^{\gamma }E\left( X^{2}\right) z}{%
\left( m+z\right) ^{2(1-\delta )}\mu }+4\frac{z}{\mu }P\left( X>\left(
z+m\right) ^{1-\delta }\right) \right) }{3^{-1}\left( \alpha -1\right)
\left( m+1\right) ^{\alpha -1}\left(1+2z+m\right) ^{-\alpha }z} &\leq &1.
\label{C_I_2}
\end{eqnarray}%
Inequalities (\ref{C_I_1}) and (\ref{C_I_2}) are used during the proofs of
Lemmas \ref{lem 3PA_k over g_k leq 1} and \ref{lem PB_k over g_k leq 1} ,
respectively. Inequality (\ref{C_m_3}) appears in a simple technical step
leading to (\ref{C_I_2}).

In Appendix  \ref{Subsec_Discussion_Param} we will discuss how equations (\ref{C_m_3})-(\ref{C_I_2}) can always be satisfied under our assumptions on the increments $X_{k}$.

%

\subsubsection{Some technical lemmas underlying the description of our
algorithm}

Using the previous assumptions we now are ready to discuss a series of
technical lemmas that are the basis for our algorithm.

\begin{lemma}
\label{lem 3PA_k over g_k leq 1} Under (\ref{C_m_2})
 (see Appendix \ref{Subsec_Discussion_Param}) we have that

\begin{equation}
\frac{3P\left( A_{k}\right) }{g\left( k\right) }\leq1,\;\forall k\geq2.
\label{eq: ratio bound A_k}
\end{equation}
\end{lemma}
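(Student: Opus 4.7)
The plan is to upper-bound $P(A_k)$ by a union bound and lower-bound $g(k)$ via the explicit Pareto form of $\bar{G}$, then compare the resulting quotient directly to (\ref{C_I_1}) with the choice $z=\mu n_{k-1}$. For the numerator, the union bound gives $P(A_k)\le\sum_{j=n_{k-1}}^{n_k-1}P\!\bigl(X>(\mu j+m)^{1-\delta}\bigr)$. Since $(\mu j+m)^{1-\delta}$ is nondecreasing in $j$, every summand is bounded by the one at $j=n_{k-1}$, and because $n_k-n_{k-1}=2^{k-1}-2^{k-2}=n_{k-1}$ for $k\ge 2$ this yields
\[
P(A_k)\;\le\;n_{k-1}\,P\!\bigl(X>(\mu n_{k-1}+m)^{1-\delta}\bigr).
\]

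For the denominator, since $Y$ is Pareto with index $\alpha>1$ I have the closed form $\bar{G}(t)=(1+t)^{1-\alpha}/(\alpha-1)$, so the $(\alpha-1)$-factors cancel in $g(k)=\bigl[(1+m+\mu n_{k-1})^{1-\alpha}-(1+m+\mu n_k)^{1-\alpha}\bigr]/(1+m+\mu)^{1-\alpha}$. Applying the mean value theorem to $t\mapsto(1+t)^{1-\alpha}$ on $[m+\mu n_{k-1},\,m+\mu n_k]$ (of length $\mu n_{k-1}$), taking the right-endpoint estimate $(1+\xi)^{-\alpha}\ge(1+m+\mu n_k)^{-\alpha}=(1+m+2\mu n_{k-1})^{-\alpha}$ (using $n_k=2n_{k-1}$), and bounding the normalization by $(1+m+\mu)^{1-\alpha}\le(1+m)^{1-\alpha}$ (valid because $1-\alpha<0$), I expect to arrive at
\[
g(k)\;\ge\;(\alpha-1)\mu n_{k-1}(m+1)^{\alpha-1}(1+2\mu n_{k-1}+m)^{-\alpha}.
\]

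Dividing the two estimates the factor $n_{k-1}$ cancels, yielding
\[
\frac{3P(A_k)}{g(k)}\;\le\;\frac{3(1+2\mu n_{k-1}+m)^{\alpha}P\!\bigl(X>(\mu n_{k-1}+m)^{1-\delta}\bigr)}{(\alpha-1)(m+1)^{\alpha-1}\mu},
\]
which is exactly half of the expression inside the supremum of (\ref{C_I_1}) evaluated at $z=\mu n_{k-1}$. As $k\ge 2$ varies, $n_{k-1}=2^{k-2}$ ranges over $\{2^\ell:\ell\ge 0\}$, so $z$ lies in $\mu\cdot\{2^k:k\ge 0\}$ and the supremum in (\ref{C_I_1}) applies, giving $3P(A_k)/g(k)\le 1/2\le 1$. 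The only real obstacle is bookkeeping of constants: one must verify that the $(1+2z+m)^{\alpha}$ and $(m+1)^{\alpha-1}$ surviving the cancellation are exactly those appearing in (\ref{C_I_1}), and that both the right-endpoint MVT estimate and the normalization estimate go in the correct direction; the factor of $6$ in (\ref{C_I_1}) against the $3$ in the conclusion provides comfortable slack.
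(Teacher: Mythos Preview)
Your proof is correct and follows essentially the same approach as the paper's: a union bound on $P(A_k)$ combined with an integral/MVT lower bound on $g(k)$, then invoking (\ref{C_I_1}) at $z=\mu n_{k-1}$. The only difference is that you count the $n_k-n_{k-1}=n_{k-1}$ terms in the union bound exactly, whereas the paper uses the cruder bound $n_k=2n_{k-1}$, which is why the paper needs the full factor $6$ in (\ref{C_I_1}) while you end up with $3$ and a spare factor of $2$.
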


\begin{proof}
See Appendix \ref{appndx pf lem bound A_k}
\end{proof}

\bigskip

On the event $B_{k}$ we sample the path $\left( S_{1}\left( \mu \right)
,...,S_{n_{k}-1}\left( \mu \right) \right) $ using an exponential tilting.
Specifically, we sample the increments, $\left( X_{j}:1\leq j\leq
n_{k}-1\right) $, conditional on the event $B_{k}$ and tilted with parameter
$\theta _{k}$ up to time $\min \left\{ T_{m},n_{k}-1\right\} $, where
\[
\theta _{k}=\frac{\gamma }{C_{k}^{1-\delta }},\text{ \ and \ }C_{k}:=\left(
n_{k-1}\mu +m\right) .
\]%
Recall that $\gamma >0$ has been implicitly constrained due to (\ref{C_I_2}%
). The corresponding log-mgf is given by
\[
\psi _{k}\left( \theta _{k}\right) :=\log \left( \frac{E\left[ \exp \left\{
\theta _{k}X\right\} I\left( X\leq C_{k}^{1-\delta }\right) \right] }{%
P\left( X\leq C_{k}^{1-\delta }\right) }\right) .
\]%
The likelihood ratio between $P\left( X_{j}\in \cdot |X_{j}\leq
C_{k}^{1-\delta }\right) $ and the tilted distribution (to be used in an IID
way for $1\leq j\leq n_{k}-1$) denoted via $P_{k,1}\left( \cdot \right) $ is
given by
\begin{equation}
\frac{dP_{k,1}}{dP}\left( X\right) =\frac{I\left( X\leq C_{k}^{1-\delta
}\right) \exp \left( \theta _{k}X-\psi _{k}\left( \theta _{k}\right) \right)
}{P\left( X\leq C_{k}^{1-\delta }\right) }.  \label{eq def of P_k1}
\end{equation}

Now we summarize some bounds for this likelihood ratio.

\begin{lemma}
\label{lem PB_k over g_k leq 1} Under conditions (\ref{C_m_3})-(\ref{C_I_2}) we have that

\begin{equation}
\frac{3\exp(-\theta_{k}S_{T_{m}}+T_{m}\psi_{k}\left( \theta_{k}\right) )}{%
g\left( k\right) } \leq1,\;\forall k\geq2.
\label{eq: ratio bound B_k}
\end{equation}
\end{lemma}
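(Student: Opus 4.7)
The plan is to bound the quantity $3\exp(-\theta_k S_{T_m}+T_m\psi_k(\theta_k))$ pathwise on the event where $T_m\in[n_{k-1},n_k-1]$ (which is the regime relevant to $B_k$) and then compare with $g(k)$, using conditions (\ref{C_m_3})--(\ref{C_I_2}) as the final hammer. Set $z:=\mu n_{k-1}$, so $C_k=m+z$ and $\theta_k=\gamma/C_k^{1-\delta}$.

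Step 1 (the exponential term in $S_{T_m}$). By the very definition of $T_m$, $S_{T_m}>m+\mu T_m \geq m+\mu n_{k-1}=C_k$, and because $\theta_k C_k = \gamma C_k^\delta$ we obtain
\[
-\theta_k S_{T_m} \leq -\gamma(m+z)^\delta,
\]
which is exactly the first (negative) term of the exponent inside (\ref{C_I_2}).

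Step 2 (bound on $\psi_k(\theta_k)$). Split $\psi_k(\theta_k)=\log E[e^{\theta_k X}I(X\leq C_k^{1-\delta})] - \log P(X\leq C_k^{1-\delta})$. For the numerator I use the elementary estimate $e^u\leq 1+u+(u^2/2)e^{\gamma}$ valid whenever $u\leq \gamma$; on $\{X\leq C_k^{1-\delta}\}$ we have $\theta_k X\leq \gamma$, and taking expectations together with $EX=0$ (which forces $E[XI(X\leq C_k^{1-\delta})]\leq 0$) yields $E[e^{\theta_k X}I(X\leq C_k^{1-\delta})]\leq 1+\theta_k^2 e^{\gamma}E[X^2]/2$; then $\log(1+c)\leq c$. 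For the denominator I use $-\log(1-p)\leq 2p$ for $p\leq 1/2$, which applies because (\ref{C_m_3}) together with Markov's inequality gives $P(X> C_k^{1-\delta})\leq E[X^2]/C_k^{2(1-\delta)}\leq 1/2$. Combining,
\[
\psi_k(\theta_k)\leq \frac{\theta_k^2 e^{\gamma} E[X^2]}{2}+2P(X>C_k^{1-\delta}).
\]
Since $T_m<n_k=2n_{k-1}=2z/\mu$, multiplying through and plugging in $\theta_k=\gamma/(m+z)^{1-\delta}$ yields exactly
\[
T_m\psi_k(\theta_k)\leq \frac{\gamma^2 e^{\gamma}E[X^2]z}{(m+z)^{2(1-\delta)}\mu}+4\frac{z}{\mu}P(X>(z+m)^{1-\delta}),
\]
matching the two positive terms of the exponent in (\ref{C_I_2}). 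Condition (\ref{C_I_2}) then upgrades this to
\[
\exp(-\theta_k S_{T_m}+T_m\psi_k(\theta_k)) \leq \frac{(\alpha-1)(m+1)^{\alpha-1}z}{3(1+2z+m)^{\alpha}}.
\]

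Step 3 (lower bound on $g(k)$). Using $\bar G'(t)=-1/(1+t)^\alpha$, write $\bar G(m+z)-\bar G(m+2z)=\int_z^{2z}(1+m+s)^{-\alpha}\,ds\geq z/(1+m+2z)^\alpha$, and divide by $\bar G(m+\mu)=1/[(\alpha-1)(1+m+\mu)^{\alpha-1}]$. Because $\mu\geq 0$ and $\alpha>1$ we have $(1+m+\mu)^{\alpha-1}\geq (m+1)^{\alpha-1}$, giving $g(k)\geq (\alpha-1)(m+1)^{\alpha-1}z/(1+m+2z)^\alpha$, i.e., three times the RHS of (\ref{C_I_2}). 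Chaining the three steps produces $3\exp(-\theta_k S_{T_m}+T_m\psi_k(\theta_k))\leq g(k)$, which is the claim.

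The most delicate step is Step 2: the numerical constants in (\ref{C_I_2}) are tight, so one must use the one-sided Taylor bound adapted to the truncated tilt, exploit $EX=0$ to kill the first-order piece, and, crucially, separate the numerator and denominator logarithms so that the dominant error from truncation enters as $P(X>C_k^{1-\delta})$ rather than as $\theta_k^2 E[X^2]\cdot P(X>C_k^{1-\delta})/\{1-P(X>C_k^{1-\delta})\}$; otherwise the budget in (\ref{C_I_2}) is insufficient. Everything else is bookkeeping.
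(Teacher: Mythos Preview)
Your proof is correct and follows essentially the same route as the paper's. The paper packages your Step~2 bound on $\psi_k(\theta_k)$ into a separate auxiliary lemma (Lemma~\ref{lem log MGF bound}, finite-variance case), proved with the same second-order Taylor expansion, the same use of $EX=0$ to drop the linear term, and the same $-\log(1-p)\leq 2p$ estimate on the truncation probability; and it leaves the $g(k)$ lower bound implicit in the final invocation of (\ref{C_I_2}), whereas you spell it out explicitly in Step~3. Structurally and quantitatively the arguments coincide.
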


\begin{proof}
See Appendix \ref{Appndx pf lemma bound exp tilting}
\end{proof}

\bigskip

As the final piece we will note the following.

\begin{lemma}
\label{lem 3PB_k^c over g_k leq 1} Then, under (\ref{Cond_Alpha}), and (\ref%
{C_m_2}) we have that%
\begin{equation}
\frac{3P\left( B_{k}^{c}\right) }{g\left( k\right) }\leq1,\;\forall k\geq2.
\label{eq: ratio bound B_k^c}
\end{equation}
\end{lemma}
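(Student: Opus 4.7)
The plan is to combine a union bound on $P(B_k^{c})$ with an explicit lower bound on $g(k)$ coming from the Pareto antiderivative, and then to match the resulting ratio against the same kind of quantitative tail estimate that drives the proof of Lemma~\ref{lem 3PA_k over g_k leq 1}.

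First, since $B_k^{c}=\bigcup_{j=1}^{n_k-1}\{X_j>(\mu n_{k-1}+m)^{1-\delta}\}$ and the $X_j$ are i.i.d., a union bound gives
\[
P(B_k^{c})\leq (n_k-1)\,P\!\left(X>(\mu n_{k-1}+m)^{1-\delta}\right)\leq 2n_{k-1}\,P\!\left(X>(\mu n_{k-1}+m)^{1-\delta}\right),
\]
using $n_k=2n_{k-1}$. For the denominator, the hypothesis $\alpha>1$ built into (\ref{Cond_Alpha}) gives the closed-form antiderivative $\bar{G}(t)=(1+t)^{-(\alpha-1)}/(\alpha-1)$. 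Since $(1+s)^{-\alpha}$ is decreasing and the integration interval $[m+\mu n_{k-1},m+\mu n_k]$ has length $n_k-n_{k-1}=n_{k-1}$, a one-line estimate gives
\[
\bar{G}(m+\mu n_{k-1})-\bar{G}(m+\mu n_k)=\int_{m+\mu n_{k-1}}^{m+\mu n_k}(1+s)^{-\alpha}\,ds\geq\frac{\mu n_{k-1}}{(1+m+2\mu n_{k-1})^{\alpha}};
\]
combining this with $\bar{G}(m+\mu)=(1+m+\mu)^{-(\alpha-1)}/(\alpha-1)$ (recall $n_1=1$) yields
\[
g(k)\geq\frac{(\alpha-1)(1+m+\mu)^{\alpha-1}\,\mu n_{k-1}}{(1+m+2\mu n_{k-1})^{\alpha}}.
\]

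Setting $z:=\mu n_{k-1}$, which ranges over $\mu\cdot\{2^{j}:j\geq 0\}$ as $k\geq 2$ varies, the factors of $n_{k-1}$ cancel on taking the ratio and one obtains
\[
\frac{3P(B_k^{c})}{g(k)}\leq\frac{6(1+2z+m)^{\alpha}P(X>(z+m)^{1-\delta})}{(\alpha-1)\mu(1+m+\mu)^{\alpha-1}}\leq\frac{6(1+2z+m)^{\alpha}P(X>(z+m)^{1-\delta})}{(\alpha-1)\mu(m+1)^{\alpha-1}},
\]
where the last step uses $(m+1)^{\alpha-1}\leq(1+m+\mu)^{\alpha-1}$. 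Taking the supremum over the relevant values of $z$ brings the right-hand side into exactly the expression forced to be $\leq 1$ by (\ref{C_I_1}), which is the quantitative content of (\ref{C_m_2}); invoking it closes the argument.

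The only delicate point is lining up the denominator $(1+m+\mu)^{\alpha-1}$ produced by $\bar{G}(m+\mu)$ with the $(m+1)^{\alpha-1}$ that appears in the hypothesized bound; the elementary observation $\mu\geq 0$ takes care of this. Everything else is a union bound and a closed-form Pareto computation, so no hidden difficulties are anticipated; the main conceptual point is simply that $B_k^{c}$ is a union of at most $n_k$ individual large-jump events, exactly the regime for which $g(k)$ was engineered via the antiderivative of a Pareto tail.
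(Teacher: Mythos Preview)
Your proof is correct and follows essentially the same route as the paper: a union bound on $P(B_k^{c})$ giving at most $n_k\,P(X>(\mu n_{k-1}+m)^{1-\delta})$, a lower bound on $g(k)$ via the Pareto integral over $[m+\mu n_{k-1},m+\mu n_k]$, and then the identification of the resulting ratio with the left-hand side of (\ref{C_I_1}) at $z=\mu n_{k-1}$. This is exactly what the paper does (it simply points back to the computation in the proof of Lemma~\ref{lem 3PA_k over g_k leq 1}); if anything you are slightly more careful, correctly carrying the normalization $\bar G(m+\mu n_1)=\bar G(m+\mu)$ and then weakening $(1+m+\mu)^{\alpha-1}\geq (m+1)^{\alpha-1}$ to match (\ref{C_I_1}).
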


\begin{proof}
See Appendix \ref{appndx pf lem bound B_k ^c}
\end{proof}

\bigskip

\subsubsection{Algorithm for sampling $Ber\left( P_{0}\left(
T_{m}<\infty\right) \right) $ jointly with $\left( S_{1}\left( \protect\mu%
\right) ,...,S_{T_{m}}\left( \protect\mu\right) \right) $ given $T_{m}<\infty$}

Now we are ready to fully discuss our algorithm to sample $J$ and $%
\omega=\left( S_{1},...S_{T_{m}}\right) $ given $T_{m}<\infty$. In addition
to the random variable $K$ following the probability mass function $g\left(
\cdot\right) $, let us introduce a random variable $Z$ uniformly distributed
on $\{0,1,2\}$ and independent of $K$. Finally, we also introduce $V\sim
U\left( 0,1\right) $ independent of everything else.

If $Z=0$, then we sample the path $(S_{1},...,S_{n_{k}-1})$ conditional on $%
A_{k}$ (denote $P_{k,0}\left( \cdot\right) =P\left( \cdot|\,A_{k}\right) $).
This will be explained in Subsection \ref{subsubpk0}, the sample takes $%
O\left( n_{k}\right) $ function evaluations to be produced. Then we let
\[
J=I(V\leq3P\left( A_{k}\right) I(T_{m}\in\left[ n_{k-1},n_{k}-1\right]
)/g\left( k\right) ).
\]
Owing to Lemma \ref{lem 3PA_k over g_k leq 1}, we have that

\begin{equation}
\frac{3P\left( A_{k}\right) }{g\left( k\right) }\leq1,\;\forall k\geq2.
\end{equation}

If $Z=1$, we sample $(S_{1}\left( \mu \right) ,...,S_{n_{k}-1}\left( \mu
\right) )$ by applying each increment $X_{j}$ conditional on $\{X_{j}\leq
\left( \mu n_{k-1}+m\right) ^{1-\delta }\}$ for $j\in \{1,...,n_{k}-1\}$ in
an IID way each following the exponential tilting (\ref{eq def of P_k1}).
This sampling distribution is denoted via $P_{k,1}\left( \cdot \right) $.
The simulation of each increment is done using Acceptance/Rejection, as we
shall explain, and the overall sampling $\{X_{j}:j\leq n_{k}-1\}$ takes $%
O\left( n_{k}\right) $ function evaluations, see Subsections \ref%
{subsubpk1_B}. Additional discussion on the evaluation $\psi _{k}\left(
\theta _{k}\right) $ in $O\left( n_{k}\right) $ function evaluations is
given in Subsection \ref{SubSam_P}. We then set%
\[
J=I(V\leq 3\cdot \exp \left\{ -\theta _{k}S_{T_{m}}+T_{m}\psi _{k}\left(
\theta _{k}\right) \right\} I(T_{m}\in \left[ n_{k-1},n_{k}-1\right]
,A_{k}^{c},B_{k})/g\left( k\right) ).
\]%
Observe that Lemma \ref{lem PB_k over g_k leq 1} guarantees the inequality

\begin{equation}
\frac{3\exp \left\{ -\theta _{k}S_{T_{m}}+T_{m}\psi _{k}\left( \theta
_{k}\right) \right\} }{g\left( k\right) }\leq 1,\;\forall k\geq 2.
\label{BND_PK2}
\end{equation}

Finally, if $Z=2$, we sample the path $(S_{1}\left( \mu \right)
,...,S_{n_{k}-1}\left( \mu \right) )$ conditional on the event $B_{k}^{c}$
(denote $P_{k,2}\left( \cdot \right) =P\left( \cdot |\,B_{k}^{c}\right) $).
This is done in a completely analogous manner as in Subsection \ref%
{subsubpk0}, thus taking $O\left( n_{k}\right) $ function evaluations. We
then let
\[
J=I(V\leq 3P\left( B_{k}^{c}\right) I(T_{m}\in \left[ n_{k-1},n_{k}-1\right]
,A_{k}^{c},B_{k}^{c})/g\left( k\right) ).
\]%
Here the inequality

\begin{equation}
\frac{3P\left( B_{k}^{c}\right) }{g\left( k\right) }\leq1,\;\forall k\geq2 ,
\end{equation}
is obtained thanks to Lemma \ref{lem 3PB_k^c over g_k leq 1}. \medskip

Upon termination we will output the pair $\left( J,\omega\right) $. If $J=1$%
, then we set $\omega=(S_{1}\left( \mu\right) ,...,S_{T_{m}}\left(
\mu\right) )$. Otherwise ($J=0$), we set $\omega=\left[ \,\right] $, the
empty vector. The precise description of the algorithm is given next.

\begin{algorithm}
\caption{Sampling $Ber\left( P_{0}\left( T_{m}<\infty \right) \right) $
and $\left( S_{1}\left(\mu \right),...,S_{T_{m}}\left( \mu \right)\right) $ given $T_{m}<\infty $
}\label{algo Sampling M_0}
\KwIn{ $g\left( \cdot \right)$ as in (\ref{def of pmf g}), with
$\alpha, \delta, m, \gamma$ satisfying the conditions in Section \ref{Sub_Select_m}
and $L$ as in (\ref{C_m_0}).}
\KwOut{ $J\sim Ber\left(P_0 \left(T_m <\infty \right)\right)$ and $\omega$. If $J=1$, then $\omega =\left(S_1\left(\mu \right),\ldots, S_{T_m} \left(\mu \right) \right)$. \newline Otherwise ($J=0$), $\omega =\left[ \, \right]$ \tcp{If $J=0$, then $\omega$ equals to the empty vector}
} Sample a time $K$ with probability mass function $g\left(k\right)=P%
\left(K=k\right)$\label{Sample time K}\newline
Sample $Z\sim Unif\left\lbrace 0,1,2\right\rbrace $\newline
Sample $V\sim U\left(0,1\right)$ independent of everything\newline
Given $Z$ and $K=k$ sample $\left(S_{1},\ldots,S_{n_{k}}\right)$ as follows:%
\newline
\If{ $Z=0$ }{
Sample $\tilde{w}=\left(S_{j}:\, j\leq n_{k}-1\right)$ from
$P_{k,0}\left(\cdot\right):=P\left(\cdot|\, A_{k}\right)$ \\
\eIf{ $V\leq\frac{3P\left(A_{k}\right)}{g\left(k\right)}I\left(A_{k},\, T_{m}\in\left[n_{k-1},n_{k}-1\right]\right)$}
{ $J=1$} {$J=0$}
}
\If{ $Z=1$ }{
Sample $\tilde{w}=\left(S_{j}:\, j\leq T_{m}\wedge\left(n_{k}-1\right)\right)$
from $P_{k,1}\left(\cdot\right)$
\[
dP_{k,1}\left(\tilde{w}\right)=\exp\left\{ \theta_{k}S_{T_{m}\wedge\left(n_{k-1}\right)}-\left(T_{m}\wedge\left(n_{k}-1\right)\psi_{k}\left(\theta_{k}\right)\right)\right\} dP\left(\tilde{w}\right)
\]
\eIf{ $V\leq\frac{3\exp\left\{ -\theta_{k}S_{T_{m}}+T_{m}\psi_{k}\left(\theta_{k}\right)\right\} }{g\left(k\right)}I\left(B_{k},\, A_{k}^{c},\, T_{m}\in\left[n_{k-1},n_{k}-1\right]\right)$}
{ $J=1$} {$J=0$}
}
\If{ $Z=2$ }{
Sample $\tilde{w}=\left(S_{j}:\, j\leq n_{k}-1\right)$ from
$P_{k,2}\left(\cdot\right):=P\left(\cdot|\, B_{k}^{c}\right)$ \\
\eIf{ $V\leq\frac{3P\left(B_{k}^{c}\right)}{g\left(k\right)}I\left(B_{k}^{c}\, A_{k}^{c},\, T_{m}\in\left[n_{k-1},n_{k}-1\right]\right)$}
{ $J=1$} {$J=0$}
}
\eIf{$J=1$}{Output $\left(J,\omega \right)$, where $\omega =\left(S_{j}\left(\mu\right):\, 1\leq j\leq T_{m}\right)$
\tcp {Recall: $S_j\left(\mu\right)=S_j -\mu j $. }} {Output $\left(J,\omega\right)$, where $\omega = \left[\, \right]$ and $J=0$. }
\end{algorithm}

\pagebreak

We now provide the following result which justifies the validity of the
algorithm.

\begin{proposition}
\label{Prop_Validity_Ber_Patch_up}The output $J$ is Bernoulli with success
parameter $P_{0}\left( T_{m}<\infty\right) $ and $\omega$ follows the
required distribution of $\left( S_{1},\ldots,S_{T_{m}}\right) $ given $%
T_{m}<\infty$. Moreover, if $E\left\vert X_{1}\right\vert ^{2+\varepsilon
}<\infty$, then the expected number of function evaluations required to
sample $J$ and $\omega$ is finite.
\end{proposition}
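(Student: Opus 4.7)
The plan for correctness is to establish, for every measurable set $E$ of finite-length paths,
\[
P(J=1,\,\omega\in E)=P\bigl(T_m<\infty,\,(S_1,\ldots,S_{T_m})\in E\bigr),
\]
which delivers both parts simultaneously. I would decompose the left side according to the two partitions built into the algorithm: the time partition $\{T_m\in[n_{k-1},n_k-1]\}_{k\ge 2}$ of $\{T_m<\infty\}$, which matches the choice of $K$, and the sample-space partition $\mathcal{E}_0=A_k$, $\mathcal{E}_1=A_k^c\cap B_k$, $\mathcal{E}_2=A_k^c\cap B_k^c$, which matches the choice of $Z$. For each $(k,z)$ the joint law of the accepted output takes the form $g(k)\cdot\tfrac{1}{3}\cdot E_{P_{k,z}}[r_{k,z}\cdot I_{k,z}\cdot I(\omega\in E)]$, where $r_{k,z}$ is the acceptance ratio and $I_{k,z}$ bundles the indicators $I(\mathcal{E}_z,\,T_m\in[n_{k-1},n_k-1])$. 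The goal is to show that each summand equals $P(T_m\in[n_{k-1},n_k-1],\,\mathcal{E}_z,\,\omega\in E)$, so that summing over $z$ and then $k$ collapses to the target. Lemmas \ref{lem 3PA_k over g_k leq 1}, \ref{lem PB_k over g_k leq 1}, \ref{lem 3PB_k^c over g_k leq 1} ensure $r_{k,z}\leq 1$, so the rejection step is valid.

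For $z\in\{0,2\}$ the proposal is mere conditioning on a positive-probability event, and the acceptance ratio $3P(\mathcal{E}_z)/g(k)$ cancels the conditional normalization, yielding the $(k,z)$-target directly. The case $z=1$ is the main obstacle: the proposal is an IID exponential tilt of the conditional-on-$\{X\leq C_k^{1-\delta}\}$ increment law, and the acceptance factor $\exp(-\theta_k S_{T_m}+T_m\psi_k(\theta_k))$ acts as the inverse of the tilted likelihood ratio. Verifying that this reproduces the target $P(T_m\in[n_{k-1},n_k-1],\,A_k^c\cap B_k,\,\omega\in E)$ requires an explicit change of measure from $P_{k,1}$ back to $P$, conditioning on the stopping time $T_m$, and observing that on the tail $(T_m,n_k-1]$ the constraint $A_k^c$ is automatic under the tilted law (since $C_k^{1-\delta}\leq(\mu j+m)^{1-\delta}$ for $j\geq n_{k-1}$) while the $B_k$-indicator is also automatic (the tilted one-dimensional law is supported on $\{X\leq C_k^{1-\delta}\}$), so that the post-$T_m$ coordinates contribute exactly the compensating normalization needed to eliminate all conditional factors.

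For the finite expected-cost claim, I would argue that a single proposal step takes $O(n_K)$ expected function evaluations in each case: the conditional draws under $P_{k,0}$ and $P_{k,2}$ via the standard size-biased trick (choose an index uniformly, sample the exceptional increment from the tail-conditional law in $O(1)$, and draw the remaining coordinates unconditionally), and the tilted draw under $P_{k,1}$ via rejection from the one-dimensional tilted conditional at each of the $n_K$ coordinates, each costing $O(1)$ expected function evaluations under the moment hypothesis. Since $\bar G(t)\sim c(1+t)^{-(\alpha-1)}$ yields $g(k)=\Theta(2^{-k(\alpha-1)})$ for large $k$, one has $E[n_K]=\sum_{k\ge 2}2^{k-1}g(k)<\infty$ precisely when $\alpha>2$. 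The assumption $E|X|^{2+\varepsilon}<\infty$ is exactly what permits the constraints (\ref{C_I_1})--(\ref{C_I_2}) of Section \ref{Sub_Select_m} to be satisfied with some $\alpha>2$ (as verified in the appendix discussion of the parameter choices), which closes the argument.
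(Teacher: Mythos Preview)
Your proposal follows essentially the same route as the paper's proof: decompose the event $\{J=1,\,\omega\in E\}$ according to $(K,Z)$, undo the change of measure in each branch to recover $P_0(T_m\in[n_{k-1},n_k-1],\mathcal{E}_z,\omega\in E)$, and sum. The paper carries this out in two passes (first $P'(J=1)$, then $P'(\omega\in F,J=1)$), whereas you do it in one; this is purely stylistic.

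Two small points where you diverge in detail from the paper. First, for the cost of sampling under $P_{k,0}$ and $P_{k,2}$ you invoke a ``size-biased trick'' as if it samples directly from $P(\cdot\mid A_k)$; it does not (the size-biased law and the conditional law differ when more than one coordinate can exceed its threshold). The paper handles this by using a mixture $Q=\tfrac12 P+\tfrac12\bar P$ as a \emph{proposal} and then accepting/rejecting, showing the acceptance rate is bounded uniformly in $k$ (Subsection~\ref{subsubpk0}). Your $O(n_k)$ conclusion is still right, but the mechanism needs that extra rejection layer. Second, your remark that for $z=1$ the post-$T_m$ coordinates ``contribute exactly the compensating normalization'' is a bit more explicit than the paper's one-line claim $E_{k,1}[r_{k,1}]=P_0(B_k,A_k^c,T_m\in[n_{k-1},n_k-1])$; neither you nor the paper writes out the bookkeeping of the truncation normalizers $P(X\le C_k^{1-\delta})$ in full, so at the sketch level you are on the same footing. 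The complexity argument via $E[n_K]<\infty$ for $\alpha>2$ is identical to the paper's.
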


\begin{proof}
\medskip To verify that indeed $J\sim Ber(P_{0}(T_{m}<\infty))$, let $%
P^{\prime}\left( \cdot\right) $ denote the joint probability distribution of
$K$, $Z$, $(S_{1},...,S_{n_{K}-1})$, and $J$ induced by the algorithm. Note,
of course, that $n_{K}-1\geq T_{m}$ under $P^{\prime}\left( \cdot\right) $.
In addition, observe that%
\begin{equation}
\begin{array}{lll}
P^{\prime}\left( J=1|\,Z=0,K=k\right) & = & \frac{3P\left( A_{k}\right) }{%
g\left( k\right) }\cdot P_{0}\left( T_{m}\in\left[ n_{k-1},n_{k}-1\right]
|\,A_{k}\right) \\
&  &  \\
& = & \frac{3}{g\left( k\right) }\cdot P_{0}\left( T_{m}\in\left[
n_{k-1},n_{k}-1\right] ,\,A_{k}\right).%
\end{array}
\label{eq: J=1 cond Z=0 K=k}
\end{equation}
Let $r_{k,1}:=\exp(-\theta_{k}S_{T_{m}}+T_{m}\psi\left( \theta_{k}\right)
)I\left( B_{k},A_{k}^{c},T_{m}\in\left[ n_{k-1},n_{k}-1%
\right] \right) $, and define $E_{k,1}(\cdot)$ to be the expectation
operator associated to the exponential tilting distribution with parameter $%
\theta_{k}$ applied to the random variables $X_{1},...,X_{n_{k}-1}$ (see (\ref{eq def of P_k1})). Note that,%
\begin{equation}
\begin{array}{lll}
P^{\prime}\left( J=1|\,Z=1,K=k\right) & = & \frac{3}{g\left( k\right) }%
E_{k,1}\left[ r_{k,1}\right] \\
&  &  \\
& = & \frac{3}{g\left( k\right) }P_{0}\left( B_{k},A_{k}^{c},T_{m}\in\left[
n_{k-1},n_{k}-1\right] \right)%
\end{array}
\label{eq: J=1 cond Z=1 K=k}
\end{equation}
Finally,%
\begin{equation}
\begin{array}{lll}
P^{\prime}\left( J=1|\,Z=2,K=k\right) & = & \frac{3}{g\left( k\right) }%
P_{0}\left( B_{k}^{c},A_{k}^{c},T_{m}\in\left[ n_{k-1},n_{k}-1\right] \right)%
\end{array}
\label{eq: J=1 cond Z=2 K=k}
\end{equation}
Combining (\ref{eq: J=1 cond Z=0 K=k})-(\ref{eq: J=1 cond Z=2 K=k}) we have
\begin{equation}
\begin{array}{l}
P^{\prime}\left( J=1\right) = \\
=\sum\limits_{k=2}^{\infty}\frac{1}{3}\left( P^{\prime}\left(
J=1|\,Z=0,K=k\right) +P^{\prime}\left( J=1|\,Z=1,K=k\right) +P^{\prime
}\left( J=1|\,Z=2,K=k\right) \right) g\left( k\right) \\
=\sum\limits_{k=2}^{\infty}\left( P_{0}\left( T_{m}\in\left[ n_{k-1},n_{k}-1%
\right] ,\,A_{k}\right) +P_{0}\left( B_{k},A_{k}^{c},T_{m}\in\left[
n_{k-1},n_{k}-1\right] \right) +P_{0}\left( B_{k}^{c},A_{k}^{c},T_{m}\in%
\left[ n_{k-1},n_{k}-1\right] \right) \right) \\
=\sum\limits_{k=2}^{\infty}P_{0}\left( T_{m}\in\left[ n_{k-1},n_{k}-1\right]
\right) =P_{0}\left( T_{m}<\infty\right) . \\
\end{array}%
\end{equation}
\bigskip Similarly we can verify that if $J=1$, $\omega=\left(
S_{1},...,S_{T_{m}}\right) $ follows the conditional law $P\left( \omega\in
\cdot|T_{m}<\infty\right) $. Just note that for any $F$,

\begin{equation}
\begin{array}{ll}
P^{\prime}\left( \omega\in F,J=1|K=k,Z=0\right) & =P_{0}\left( \omega\in
F,T_{m}\in\left[ n_{k-1},n_{k}-1\right] |A_{k}\right) \cdot\frac{3P\left(
A_{k}\right) }{g\left( k\right) } \\
& =P_{0}\left( \omega\in F,T_{m}\in\left[ n_{k-1},n_{k}-1\right]
,A_{k}\right) \cdot\frac{3}{g\left( k\right) }, \\
P^{\prime}\left( \omega\in F,J=1|K=k,Z=1\right) & =P_{0}\left( \omega\in
F,T_{m}\in\left[ n_{k-1},n_{k}-1\right] A_{k}^{c}|B_{k}\right) \cdot \frac{%
3P\left( B_{k}\right) }{g\left( k\right) } \\
& =P_{0}\left( \omega\in F,T_{m}\in\left[ n_{k-1},n_{k}-1\right]
A_{k}^{c},B_{k}\right) \cdot\frac{3}{g\left( k\right) }, \\
P^{\prime}\left( \omega\in F,J=1|K=k,Z=2\right) & =P_{0}\left( \omega\in
F,A_{k}^{c},T_{m}\in\left[ n_{k-1},n_{k}-1\right] |B_{k}^{c}\right) \cdot%
\frac{3P\left( B_{k}^{c}\right) }{g\left( k\right) } \\
& =P_{0}\left( \omega\in F,T_{m}\in\left[ n_{k-1},n_{k}-1\right]
,B_{k}^{c},A_{k}^{c}\right) \cdot\frac{3}{g\left( k\right) }.%
\end{array}%
\end{equation}

Consequently, combining these terms%
\begin{equation}
\begin{array}{ll}
& P^{\prime}\left( \omega\in F,J=1\right) \\
& =\sum\limits_{k=2}^{\infty}[P_{0}\left( \omega\in F,T_{m}\in\left[
n_{k-1},n_{k}-1\right] ,A_{k}\right) +P_{0}\left( \omega\in F,T_{m}\in\left[
n_{k-1},n_{k}-1\right] A_{k}^{c},B_{k}\right) \\
& +P_{0}\left( \omega\in F,T_{m}\in\left[ n_{k-1},n_{k}-1\right]
,B_{k}^{c},A_{k}^{c}\right) ] \\
& =\sum\limits_{k=2}^{\infty}P_{0}\left( \omega\in F,T_{m}\in\left[
n_{k-1},n_{k}-1\right] \right) =P_{0}\left( \omega\in F,T_{m}<\infty\right) .%
\end{array}%
\end{equation}
Since $P^{\prime}\left( J=1\right) =P_{0}\left( T_{m}<\infty\right) $, we
conclude that indeed
\[
P^{\prime}\left( \omega\in F|J=1\right) =P_{0}\left( \omega\in
F|T_{m}<\infty\right) .
\]
We now argue that the expected number of function evaluations required to
generate $(J,\omega)$ has finite mean. Let us assume that sampling from $P_{k,0}\left( \cdot\right), P_{k,1}\left( \cdot\right) $, and $P_{k,2}\left( \cdot\right)$
takes $O(n_k)$ function evaluations (a fact that it is not difficult to see, but nonetheless we will justify in Subsections \ref{subsubpk0} and \ref{subsubpk1_B}). Then, we note that each proposal $\omega$
takes on the order of
\[
O(\sum_{k=2}^{\infty}n_{k}g\left( k\right) )\leq
O(\sum_{k=2}^{\infty}n_{k}^{2}P\left( Y>n_{k-1}\mu+m\right) )<\infty
\]
function evaluations; the sum is finite assuming that $\alpha>2$, as
indicated in (\ref{Cond_Alpha}).
\end{proof}

We close this section explaining how to sample from $P_{k,0}\left( \cdot
\right) ,P_{k,1}\left( \cdot \right) $, and $P_{k,2}\left( \cdot \right) $.
We will also verify that it takes $O(n_{k})$ function evaluations to sample $%
\omega $ in each of these three cases as claimed in the end of Proposition %
\ref{Prop_Validity_Ber_Patch_up}.

\subsubsection{Sampling from $P_{k,0}\left( \cdot \right) $ and $%
P_{k,2}\left( \cdot \right) $\label{subsubpk0}}

We now explain how to use Acceptance / Rejection to obtain a sample from $%
P_{k,0}\left( \cdot \right) $ (i.e. sampling $(S_{1},...,S_{n_{k}-1})$ given 
$A_{k}$). Our proposal distribution, which we denote by $Q\left( \cdot
\right) $, is based on a mixture $P\left( \cdot \right) $ and another
distribution which we denote by $\bar{P}\left( \cdot \right) $ to be
described momentarily. In particular, we shall set $Q=.5P+.5\bar{P}$. As we
shall see, the reason for introducing $P$ is to make sure that the
acceptance ratio is bounded uniformly over $\mu $. This will be relevant in
our discussion on mixing time in heavy-traffic in Section \ref{Sec:
Numerical} (i.e. when $\mu $ is close to zero). If $\mu $ is not close to
zero then we can simply select $Q=\bar{P}$ and the acceptance ratio will be
bounded uniformly in $k$, but not as $\mu \rightarrow 0$.

The distribution of $(S_{1},...,S_{n_{k}-1})$ under $\bar{P}\left( \cdot
\right) $ is better described algorithmically. First, we sample $T_{k}$ with
probability mass function $r_{k}\left( \cdot \right) $ given by%
\[
r_{k}\left( j\right) =\frac{P(X_{j}>\left( \mu j+m\right) ^{1-\delta })}{%
\sum_{j=n_{k-1}}^{n_{k}-1}P(X_{j}>\left( \mu j+m\right) ^{1-\delta })},
\]%
for $j\in \left\{ n_{k-1},\ldots ,n_{k}-1\right\} $. Next, given $T_{k}=j$,
sample $X_{j}$ conditional on $X_{j}>\left( \mu j+m\right) ^{1-\delta }$.
Finally, sample $X_{i}$, for $i\neq j$ and $1\leq i\leq n_{k}-1$ from the
nominal (unconditional) distribution. We then obtain that 
\[
\frac{d\bar{P}}{dP}\left( X_{1},...,X_{n_{k}-1}\right) =\frac{%
\sum_{j=n_{k-1}}^{n_{k}-1}I\left( X_{j}>\left( \mu j+m\right) ^{1-\delta
}\right) }{\sum_{j=n_{k-1}}^{n_{k}-1}P(X_{j}>\left( \mu j+m\right)
^{1-\delta })}.
\]%
Therefore, with $P_{k,0}\left( \cdot \right) =P\left( \cdot |A_{k}\right) $
we obtain that 
\begin{equation}
\begin{array}{ll}
\frac{I\left( A_{k}\right) }{P\left( A_{k}\right) }\cdot \frac{dP}{dQ}\left(
X_{1},...,X_{n_{k}-1}\right)  & =2\frac{I\left( A_{k}\right) }{P\left(
A_{k}\right) }\cdot \frac{\sum_{j=n_{k-1}}^{n_{k}-1}P(X_{j}>\left( \mu
j+m\right) ^{1-\delta })}{\sum_{j=n_{k-1}}^{n_{k}-1}I\left( X_{j}>\left( \mu
j+m\right) ^{1-\delta }\right) +\sum_{j=n_{k-1}}^{n_{k}-1}P(X_{j}>\left( \mu
j+m\right) ^{1-\delta })} \\ 
& \leq c_{k}:=\frac{2}{P\left( A_{k}\right) }\cdot \frac{%
\sum_{j=n_{k-1}}^{n_{k}-1}P(X_{j}>\left( \mu j+m\right) ^{1-\delta })}{%
1+\sum_{j=n_{k-1}}^{n_{k}-1}P(X_{j}>\left( \mu j+m\right) ^{1-\delta })}.%
\end{array}%
\end{equation}%
Consequently, in order to sample from $P_{k,0}\left( \cdot \right) $ it
suffices to propose from $Q\left( \cdot \right) $ and accept with probability%
\begin{eqnarray*}
q &:&=\frac{1}{c_{k}}\cdot \frac{I\left( A_{k}\right) }{P\left( A_{k}\right) 
}\cdot \frac{dP}{dQ}\left( X_{1},...,X_{n_{k}-1}\right)  \\
&=&I\left( A_{k}\right) \cdot \frac{1+\sum_{j=n_{k-1}}^{n_{k}-1}P(X_{j}>%
\left( \mu j+m\right) ^{1-\delta })}{\sum_{j=n_{k-1}}^{n_{k}-1}I\left(
X_{j}>\left( \mu j+m\right) ^{1-\delta }\right)
+\sum_{j=n_{k-1}}^{n_{k}-1}P(X_{j}>\left( \mu j+m\right) ^{1-\delta })}.
\end{eqnarray*}

We note that the expected number of proposals required to accept is $c_{k}$.
Moreover, as we shall quickly verify, $c_{k}$ is bounded uniformly both in $k
$ and $\mu >0$. To see this, use the fact that for $x\geq 0$, $1-x\leq \exp
\left( -x\right) $ and conclude that 
\begin{eqnarray*}
P\left( A_{k}\right) 
&=&1-\prod\limits_{j=n_{k-1}}^{n_{k}-1}(1-P(X_{j}>\left( \mu j+m\right)
^{1-\delta })) \\
&\geq &1-\exp \left( -\sum_{j=n_{k-1}}^{n_{k}-1}P(X_{j}>\left( \mu
j+m\right) ^{1-\delta })\right) .
\end{eqnarray*}%
Let us write 
\[
\Lambda :=\Lambda \left( k,\mu \right)
=\sum_{j=n_{k-1}}^{n_{k}-1}P(X_{j}>\left( \mu j+m\right) ^{1-\delta })
\]%
and therefore obtain that 
\[
c_{k}\leq \frac{2}{1-\exp \left( -\Lambda \right) }\cdot \frac{\Lambda }{%
1+\Lambda }\leq 4I\left( \Lambda \in \lbrack 0,1/2]\right) +6I\left( \Lambda
\geq 1/2\right) \leq 6.
\]

We suggest applying a completely analogous randomization procedure to sample 
$P_{k,2}\left( \cdot \right) $, which corresponds to sampling given the
event 
\[
B_{k}^{c}=\bigcup\limits_{j=1}^{n_{k}-1}\left\{ X_{j}>\left( \mu
n_{k-1}+m\right) ^{1-\delta }\right\} .
\]

A very similar argument as the one just discussed shows that the number of
proposals required to accept is also uniformly bounded over $k$ and $\mu $.
We therefore conclude that it takes $O(n_{k})$ function evaluations to
sample $\omega $ both under $P_{k,0}\left( \cdot \right) $ and $%
P_{k,2}\left( \cdot \right) $.

\subsubsection{Sampling from $P_{k,1}\left( \cdot \right) $ \label%
{subsubpk1_B}}

In order to simulate from $P_{k,1}\left( \cdot \right) $ we use Acceptance /
Rejection. We propose from $P\left( \cdot \right) $ (the nominal
distribution). Using the fact that $\theta _{k}=\gamma /C_{k}^{1-\delta }$,
note that%
\begin{equation}
\begin{array}{ll}
dP_{k,1} & =\frac{I\left( X\leq C_{k}^{1-\delta }\right) \exp \left( \theta
_{k}X-\psi _{k}\left( \theta _{k}\right) \right) }{P\left( X\leq
C_{k}^{1-\delta }\right) }dP \\
& \leq \frac{I\left( X\leq C_{k}^{1-\delta }\right) \exp \left( \gamma -\psi
_{k}\left( \theta _{k}\right) \right) }{P\left( X\leq C_{k}^{1-\delta
}\right) }dP\leq \frac{\exp \left( \gamma -\psi _{k}\left( \theta
_{k}\right) \right) }{P\left( X\leq C_{k}^{1-\delta }\right) }dP.%
\end{array}%
\end{equation}%
So, in order to sample from $P_{k,1}\left( \cdot \right) $ it suffices to
propose from $P\left( \cdot \right) $ and accept with probability%
\[
q\left( \omega \right) :=\frac{P\left( X\leq C_{k}^{1-\delta }\right) }{\exp
\left( \gamma -\psi _{k}\left( \theta _{k}\right) \right) }\frac{dP_{k,1}}{dP%
}=\exp \left( \theta _{k}X-\gamma \right) I\left( X\leq C_{k}^{1-\delta
}\right) .
\]%
The expected number of proposals required to obtain a successful sample $X$
from $P_{k,1}\left( \cdot \right) $ is equal to
\[
\frac{\exp \left( \gamma -\psi _{k}\left( \theta _{k}\right) \right) }{%
P\left( X\leq C_{k}^{1-\delta }\right) }\leq \frac{\exp \left( \gamma
\right) }{P\left( X\leq m\right) }<\infty ,
\]%
which is clearly uniformly bounded in $k$. So each increment takes $O\left(
1\right) $ time to be simulated and therefore we conclude it takes $O\left(
n_{k}\right) $ function evaluations to simulate $\omega $ under $%
P_{k,1}\left( \cdot \right) $.



\subsection{Building $M_{0}$ and $\left( S_{1}\left( \protect\mu\right)
,....,S_{\Delta}\left( \protect\mu\right) \right) $ from downward\ and
upward\ patches}

Before we move on to the algorithm let us define the following. Given a
vector $\mathbf{s}$, of dimension $d\geq1$, we let $\mathbf{L}(\mathbf{s})=%
\mathbf{s}\left( d\right) $ (i.e. the $d$-th component of the vector $%
\mathbf{s}$).

\begin{algorithm}
\caption{Sampling $M_{0}$ and $\left( S_{1}\left( \mu
\right),....,S_{\Delta} \left( \mu \right)\right)$} \label{algo Sampling
patches of path} 
\KwIn{Same as Algorithm \ref{algo Sampling M_0}}
\KwOut{
The path $\left( S_{1}\left( \mu \right),....,S_{\Delta }\left( \mu \right)
\right) $} 
Initialization\ $\mathbf{s}\leftarrow []$, $F\leftarrow 0$, $%
\mathbf{L}=0$ \newline
\tcp{initially $\mathbf{s}$ is the empty vector,the variable $\mathbf{L}$
represents the last position of the drifted random walk}
\While{$F=0$}  {
	Sample $\left(S_1\left(\mu \right), \ldots, S_{T_{-Lm}}\left(\mu
	\right)\right)$  given $S_0\left(\mu\right)=0$\newline
	$\mathbf{s}=\left[\mathbf{s},\mathbf{L}+ S_1\left(\mu \right), \ldots,\mathbf{L}+ S_{T_{-Lm}}\left(\mu \right)\right] $\newline
	$\mathbf{L}=\mathbf{L}\left(\mathbf{s}\right)$\newline
	Call Algorithm \ref{algo Sampling M_0} and obtain $\left(J,w\right)$ \newline
	\eIf{$J=1$}
	{ Set $\mathbf{s=[s, } \mathbf{L} + \omega ]$} {$F\leftarrow 1 \,\, \left( J=0\right) $}
  }

\textbf{
Output $\mathbf{s}$. }
\end{algorithm}

\begin{proposition}
\label{Prop_Mo_and_upto_Delta}The output of Algorithm \ref{algo Sampling
patches of path} has the correct distribution according to (\ref{eq def
Delta}) and (\ref{Eq_EVAL_M0}). Moreover, if $E\left\vert X_{1}\right\vert
^{2+\varepsilon}<\infty$, then the expected number of function evaluations
required to sample $M_{0}$ and $(S_{1}\left( \mu\right)
,....,S_{\Delta}\left( \mu\right) )$ is finite.
\end{proposition}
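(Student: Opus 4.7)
The plan is to verify by induction on the iteration index $j\geq 1$ that at the start of the $j$-th iteration the recorded vector $\mathbf{s}$ has the joint law of $(S_1(\mu),\ldots,S_{D_{j-1}}(\mu))$ restricted to the event $\{U_1<\infty,\ldots,U_{j-1}<\infty\}$, and that the continuation of the walk past the current endpoint is, by the strong Markov property, distributed as a fresh driftless-mean random walk starting from the current endpoint. Each iteration then performs two sub-tasks. First, it samples the unconditional walk from the current endpoint until the path has descended by $Lm$: this produces $(S_1(\mu),\ldots,S_{T_{-Lm}}(\mu))$, a.s.\ finite by Proposition \ref{pro:behavior of Delta_n and Gamma_n}, which by the strong Markov property is exactly the missing piece of the walk running from the end of the previous upward patch (or the origin if $j=1$) up to the next downward milestone $D_j$. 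Second, it invokes Algorithm \ref{algo Sampling M_0}, which by Proposition \ref{Prop_Validity_Ber_Patch_up} simultaneously returns a Bernoulli $J$ with parameter $P_0(T_m<\infty)$ and, on $\{J=1\}$, the conditional path $(S_1(\mu),\ldots,S_{T_m}(\mu))$ given $T_m<\infty$; after translation, this is precisely the conditional joint law of the walk from $D_j$ to $U_j$ on $\{U_j<\infty\}$. The loop exits the first time $J=0$, which corresponds to the first $U_j=\infty$ and hence, by (\ref{eq def Delta}), to the stopping index defining $\Delta$; therefore $\mathbf{s}$ has the distribution of $(S_1(\mu),\ldots,S_\Delta(\mu))$, and $M_0$ can be recovered from $\mathbf{s}$ via (\ref{Eq_EVAL_M0}).

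For the running-time analysis, let $N$ denote the total number of iterations of the while loop. By Proposition \ref{Prop_Validity_Ber_Patch_up}, each iteration's $J$ is Bernoulli with failure probability $P_0(T_m=\infty)=P(M_0\leq m)\geq P(M_0=0)>0$ (the same bound used in the proof of Proposition \ref{pro:behavior of Delta_n and Gamma_n}), and these Bernoullis are independent across iterations by the strong Markov property. Hence $N$ is stochastically dominated by a Geometric random variable with parameter $P(M_0=0)$, giving $E[N]<\infty$. Within a single iteration, the downward-patch phase uses $T_{-Lm}$ function evaluations and the subsequent call to Algorithm \ref{algo Sampling M_0} has finite expected cost by Proposition \ref{Prop_Validity_Ber_Patch_up}; the two phases are independent of iteration index in distribution, so the per-iteration expected cost is a single constant.

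The remaining quantitative estimate is the finiteness of $E[T_{-Lm}]$ under $E|X_1|^{2+\varepsilon}<\infty$. Since $EX_1=0$ and $\mu>0$, Rosenthal's inequality yields $E|S_n|^{2+\varepsilon}=O(n^{1+\varepsilon/2})$, and therefore for $n>2Lm/\mu$,
\begin{equation*}
P(T_{-Lm}>n)\leq P\bigl(S_n(\mu)\geq -Lm\bigr)\leq P\bigl(|S_n|\geq n\mu/2\bigr)=O\bigl(n^{-(1+\varepsilon/2)}\bigr),
\end{equation*}
which is summable in $n$, whence $E[T_{-Lm}]<\infty$. Combining this with $E[N]<\infty$ and the i.i.d.\ structure of the per-iteration work across iterations, Wald's identity gives
\begin{equation*}
E[\text{total function evaluations}]=E[N]\cdot E[\text{work per iteration}]<\infty,
\end{equation*}
completing the argument.

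The main obstacle is the finiteness of $E[T_{-Lm}]$: a plain Chebyshev bound yields only an $O(1/n)$ tail and is not summable, so the full $(2+\varepsilon)$-moment has to be used via a Rosenthal-type inequality. Everything else reduces to straightforward bookkeeping with the strong Markov property, inheriting the hard work of Proposition \ref{Prop_Validity_Ber_Patch_up}.
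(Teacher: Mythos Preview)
Your argument is correct and follows the same approach as the paper: correctness via the strong Markov property together with Proposition \ref{Prop_Validity_Ber_Patch_up}, and finite expected running time via a geometric iteration count combined with Wald's identity. You are actually more thorough than the paper on one point---the paper's proof does not explicitly verify $E[T_{-Lm}]<\infty$, which you handle via Rosenthal's inequality (this also follows more cheaply from the classical fact that a negative-drift random walk with integrable increments has finite-mean first passage time below any level); note too that $N$ is \emph{exactly} geometric with parameter $P_0(T_m=\infty)=P(M_0\leq m)$, not merely stochastically dominated by one.
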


\begin{proof}
The fact that the output has the correct distribution follows directly from
our discussion leading to (\ref{Eq_EVAL_M0}) and from Proposition \ref%
{Prop_Validity_Ber_Patch_up}, which also implies that simulating a single
replication of $\left( J,\omega\right) $ using Algorithm \ref{algo Sampling
M_0} requires finite expected running time. But Algorithm \ref{algo Sampling
patches of path} requires a number of calls to Algorithm \ref{algo Sampling
M_0} which is geometrically distributed with mean $1/P_{0}\left(
T_{m}=\infty\right) <\infty$. Therefore, by Wald's identity (see \cite%
{Durrett}, p. 178) we conclude the finite expected running time of Algorithm %
\ref{algo Sampling patches of path}.
\end{proof}


\bigskip

 \section{From $M_{0}$ to $\left( S_{\lowercase{k}}\left( \protect\mu\right)
,M_{\lowercase{k}}\,:\lowercase{k}\geq0\right) $: Implementation of Procedure \ref{Algo
Hueristics for S_n M_n}}

\label{sec:Implementation-of-the algorithm}

In this section we will explain in detail how to implement the steps behind
the construction of the sequence $\left( S_{n} \left( \mu\right)
,M_{n}:\,n\geq0\right) $ that were described in Procedure \ref{Algo
Hueristics for S_n M_n}. We will be calling Algorithm \ref{algo Sampling M_0}
and Algorithm \ref{algo Sampling patches of path} repeatedly.


\subsection{Implementing Step \protect\ref{Hueristics Step_1} in Procedure
\protect\ref{Algo Hueristics for S_n M_n}}

\label{sub:Step-1}

In Step \ref{Hueristics Step_1} we need to sample a downward\ patch\ of the
drifted random walk $\left( S_{n}\left( \mu\right) :\,n\geq0\right) $. The
goal is to detect the time where the next downward milestone is crossed,
namely the next element in the sequence $(D_{n}:\,n\geq1)$, conditional on
the event that the level $C_{UB}$ is not crossed. To this end, let us invoke
a result in \cite{BS11}.

\begin{lemma}
\label{lem:Blanchet Sigman}Let $0<a<b\leq\infty$ and consider any sequence
of bounded positive measurable functions $f_{k}:\mathbb{R}%
^{k+1}\longrightarrow \mathbb{[}0,\infty)$.
\footnotesize {
\[
E_{0}\left[ f_{T_{-a}}\left( S_{0}\left( \mu\right) ,...,S_{T_{-a}}\left(
\mu\right) \right) |T_{b}=\infty\right] =\frac{E_{0}\left[ f_{T_{-a}}\left(
S_{0}\left( \mu\right) ,...,S_{T_{-a}}\left( \mu\right) \right)
I\left(S_i\left(\mu\right)<b,\, \forall i<T_{-a}\right)
P_{S_{T_{-a}}}\left( T_{b}=\infty\right) \right] }{P_{0}\left(
T_{b}=\infty\right) }
\]
}

So, if $P^{\ast }\left( \cdot \right) =P_{0}\left( \cdot |T_{b}=\infty
\right) $, then%
\begin{equation}
\frac{dP^{\ast }}{dP_{0}}=\frac{I\left( S_{i}\left( \mu \right) <b,\forall
i<T_{-a}\right) P_{S_{T_{-a}}}\left( T_{b}=\infty \right) }{P_{0}\left(
T_{b}=\infty \right) }\leq \frac{1}{P_{0}\left( T_{b}=\infty \right) }.
\end{equation}

\end{lemma}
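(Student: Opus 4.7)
The plan is to obtain the stated identity as a direct application of the strong Markov property at the stopping time $T_{-a}$, after which the Radon--Nikodym form and the bound fall out immediately.

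First I would argue that $T_{-a}<\infty$ almost surely. This is a consequence of the strong law of large numbers: since $ES_{1}(\mu)=-\mu<0$, we have $S_{n}(\mu)/n\to-\mu$ a.s., so $S_{n}(\mu)\to-\infty$ and in particular the walk eventually dips below $-a$. Hence the random variable $S_{T_{-a}}(\mu)$ and the path segment $(S_{0}(\mu),\ldots,S_{T_{-a}}(\mu))$ are well-defined $P_{0}$-a.s., and it suffices to establish the identity in the form
\[
E_{0}\!\left[f_{T_{-a}}(S_{0}(\mu),\ldots,S_{T_{-a}}(\mu))\, I(T_{b}=\infty)\right] = E_{0}\!\left[f_{T_{-a}}(\cdots)\, I(S_{i}(\mu)<b,\,\forall i<T_{-a})\, P_{S_{T_{-a}}}(T_{b}=\infty)\right],
\]
since dividing by $P_{0}(T_{b}=\infty)$ then gives the claim.

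The central step is to decompose the event $\{T_{b}=\infty\}=\{S_{n}(\mu)\le b\text{ for all }n\ge 0\}$ according to whether the potential upcrossing occurs before or after $T_{-a}$:
\[
\{T_{b}=\infty\}=\bigl\{S_{i}(\mu)<b,\,\forall i<T_{-a}\bigr\}\cap\bigl\{S_{i}(\mu)<b,\,\forall i\ge T_{-a}\bigr\}.
\]
The first event is $\mathcal{F}_{T_{-a}}$-measurable, while the second, by the strong Markov property of the random walk applied at the stopping time $T_{-a}$, has conditional probability given $\mathcal{F}_{T_{-a}}$ equal to $P_{y}(T_{b}=\infty)$ evaluated at $y=S_{T_{-a}}(\mu)$. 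Since $f_{T_{-a}}(S_{0}(\mu),\ldots,S_{T_{-a}}(\mu))$ is also $\mathcal{F}_{T_{-a}}$-measurable, tower-property-conditioning on $\mathcal{F}_{T_{-a}}$ yields the identity above.

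The bound $dP^{\ast}/dP_{0}\le 1/P_{0}(T_{b}=\infty)$ is then immediate from the formula: both $I(S_{i}(\mu)<b,\,\forall i<T_{-a})$ and $P_{S_{T_{-a}}}(T_{b}=\infty)$ are bounded above by $1$. There is not really a hard obstacle here; the only point requiring care is the clean invocation of the strong Markov property at $T_{-a}$, ensuring that the "no-upcrossing" indicator really splits multiplicatively into an $\mathcal{F}_{T_{-a}}$-measurable piece and a post-$T_{-a}$ piece whose conditional law depends only on $S_{T_{-a}}(\mu)$.
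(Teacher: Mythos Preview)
Your proposal is correct and matches the paper's approach: the paper does not give a detailed argument but simply states that the result ``holds due to the strong Markov property'' (citing \cite{BS11}), which is exactly the mechanism you spell out---splitting $\{T_b=\infty\}$ into its pre- and post-$T_{-a}$ pieces and conditioning on $\mathcal{F}_{T_{-a}}$. Your added justification that $T_{-a}<\infty$ a.s.\ via the strong law is a welcome clarification that the paper leaves implicit.
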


\medskip{}

The result of Lemma \ref{lem:Blanchet Sigman} holds due to the strong Markov
property. Lemma \ref{lem:Blanchet Sigman} enables us to sample a downward
patch\ by means of the Acceptance/Rejection method using the nominal (i.e.
unconditional) distribution as proposal. More precisely, suppose that our
current position is $S_{D_{j}}\left( \mu\right) $ and we know that the
random walk will never reach position $C_{UB}$ (say, if $U_{j}=\infty$ then $C_{UB}=S_{D_{j}}\left( \mu\right) +m$).
Next we need to simulate the path
up to time $D_{j+1}$. Lemma \ref{lem:Blanchet Sigman} says that we can
propose a downward patch $s_{1}:=S_{1}\left( \mu\right)
,...,s_{T_{-Lm}}:=S_{T_{-Lm}}\left( \mu\right) $,  under the nominal
probability given $S_{0}\left( \mu\right) =0$ and 
$S_i\left(\mu\right)\leq m$ for $i\leq T_{-Lm}$. Then we accept the
downward patch with probability $P_{0}\left( T_{\sigma}=\infty\right) $,
where $\sigma=C_{UB}-S_{D_{j}}\left( \mu\right) -s_{T_{-Lm}}$.  
For example, 
if $U_{j}=\infty$ then $\sigma=m-s_{T_{-Lm}}\geq(L+1)m$.

Of course, to accept,  we can simulate a Bernoulli, say $B$, with probability $%
P_{0}\left( T_{\sigma }=\infty \right) $ by calling Algorithm \ref{algo
Sampling M_0} with $m\longleftarrow \sigma $ and returning $B=1-J$. If the
downward patch $\left( s_{1},...,s_{T_{-Lm}}\right) $ is accepted we
concatenate to produce the output%
\begin{eqnarray*}
&&\left( S_{0}\left( \mu \right) ,...,S_{D_{j}}\left( \mu \right)
,S_{D_{j}+1}\left( \mu \right) ,...,S_{D_{j+1}}\left( \mu \right) \right) \\
&=&(S_{0}\left( \mu \right) ,...,S_{D_{j}}\left( \mu \right)
,S_{D_{j}}\left( \mu \right) +s_{1},...,S_{D_{j}}\left( \mu \right)
+s_{T_{-Lm}}).
\end{eqnarray*}%
Otherwise, we keep simulating downward-patch proposals until acceptance.


\subsection{Implementing Step 2 in Procedure \protect\ref{Algo Hueristics
for S_n M_n}}

\label{sub:Step-2}

Assume we have finished generating the path up to time $D_{j+1}$ as
explained in Subsection \ref{sub:Step-1}. At this point we let $%
\sigma=C_{UB}-S_{D_{j+1}}\left( \mu\right) \geq(L+1)m$ and define

\begin{eqnarray*}
\xi &=&P_{0}\left( U_{j+1}<\infty |\,S_{1},\ldots ,S_{D_{j+1}},U_{1},\ldots
,U_{j}\right) \\
&=&P_{0}\left( T_{m}<\infty |\,T_{\sigma }=\infty \right) =P_{0}\left(
M_{0}>m\,|\,M_{0}\leq \sigma \right) .
\end{eqnarray*}%
\newline

Observe that assumption in equation (\ref{C_m_0}) ensures that $\xi>0$. We will explain
how to simulate $B\sim Ber\left( \xi\right) $. First, we call Algorithm \ref%
{algo Sampling patches of path} and obtain the output $\omega=\left(
s_{1},...,s_{\Delta}\right) $. We compute $M_{0}$ according to (\ref%
{Eq_EVAL_M0}) and keep calling Algorithm \ref{algo Sampling patches of path}
until we obtain $M_{0}\leq\sigma$, at which point we set $B=I\left(
M_{0}>m\right) $. Of course, we obtain $B\sim Ber\left( \xi\right) $ and if $%
B=1$ we can write%
\begin{equation}
\left( S_{D_{j+1}}\left( \mu\right) ,S_{D_{j+1}+1}\left( \mu\right)
,\ldots,S_{U_{j+1}}\left( \mu\right) \right) =(S_{D_{j+1}}\left( \mu\right)
,S_{D_{j+1}+1}\left( \mu\right) +s_{1},...,S_{D_{j}}\left( \mu\right)
+s_{\Delta}).  \label{Outp}
\end{equation}
Otherwise, $B=0$, we could simply declare $U_{j+1}=\infty$, update $%
C_{UB}\leftarrow S_{D_{j+1}}\left( \mu\right) +m$ and proceed to the next
iteration. 

Breaking the path into ``upward'' and ``downward'' patches helps to conceptualize the logic of our method.
However, it is not an efficient way of implementing the method.   
A more efficient implementation would be to sequentially generate versions of $\omega=(s_{1},...,s_{\Delta})$   as long
as $M_{0}\leq\ m$. 
We can then output the right hand side of (\ref{Outp}) even when $%
B=0$, because the path has been simulated according to the correct
distribution given $T_{\sigma}=\infty$. 
We provide a precise description of this implementation in Algorithm \ref{algo last} in the next section.

\subsection{Our algorithm to sample $\left( S_{k}\left( \protect\mu\right)
,M_{k}\,:0\leq k\leq n\right)$ and Proof of Theorem \protect\ref{Thm_MAIN}
\label{Sub_Algo_last}}

We close this section by giving the explicit implementation of our general
method outlined in Subsections \ref{sub:Step-1} and \ref{sub:Step-2}. In
order to describe the procedure, let us recall some definitions. Given a
vector $\mathbf{s}$ of dimension $d\geq1$, let $\mathbf{L}\left( \mathbf{s}%
\right) =\mathbf{s}\left( d\right) $ (the last element of the vector) and
set $\mathbf{d}\left( \mathbf{s}\right) =d$ (the length of the vector). The
implementation is given in Algorithm \ref{algo last}.

\begin{proof}[ of Theorem \protect\ref{Thm_MAIN}]
The validity of Algorithm \ref{algo last} is justified following the same
logic as in Proposition \ref{Prop_Mo_and_upto_Delta}. The only difference
here is that the number of trials required to simulate each upward patch is
geometrically distributed with a mean which is bounded by $1/P_{0}\left(
M_{0}=0 \right) <\infty $, following the reasoning behind (\ref%
{Good_Lower_Bound_p}). Also note that%
\[
E_{0}(T_{m}I(T_{m}<\infty ))\leq \sum_{k=2}^{\infty }n_{k}g\left( k\right)
<\infty .
\]%
Moreover, if $\sigma \geq (L+1)m$, by assumption (\ref{C_m_0})
\[
E_{0}\left( T_{m}|T_{m}<\infty ,T_{\sigma }=\infty \right) \leq \frac{%
E_{0}\left( T_{m}I(T_{m}<\infty \right) )}{P_{0}\left( T_{m}<\infty
,T_{\sigma }=\infty \right) }\leq \frac{E_{0}\left( T_{m}I(T_{m}<\infty
\right) )}{P_{0}\left( m<M_{0}\leq \sigma \right) }<\infty .
\]%
So, each upward path requires finite number of function evaluations to be
produced. The argument for finite expected running time then follows along
the lines of Proposition \ref{Prop_Mo_and_upto_Delta}.
\end{proof}

\begin{algorithm}[t]
\caption{An Efficient Implementation of Procedure 1}
\label{algo last}
\KwIn{Same as Algorithm \ref{algo Sampling M_0} and some $n\geq 1$}
\KwOut {$\left(S_{k}\left(\mu\right), M_k \, :0\leq k\leq n\right) $}

Initialization\  $\mathbf{s}\longleftarrow \lbrack 0]$,  $\mathbf{N}\longleftarrow \lbrack 0]$, $F\longleftarrow 0$
	\tcp {Initialize the sample path with the 1-dimensional zero vector.} 
	\tcp {The vector $N$, which is initially equals to zero records the times $D_{j}$ such that $U_{j}=\infty $}
	\tcp {$F$ is a Boolean variable which detects when we have enough information to compute $M_{n}$}

Call Algorithm \ref{algo Sampling patches of path} and obtain $\omega =(s_{1},...,s_{\Delta })$ 

Set $\mathbf{s=[s},\omega ]$
	\tcp { concatenate $\omega $ to $\mathbf{s}$} 

Set $\mathbf{N}=[\mathbf{N},\mathbf{d}\left( \mathbf{s}\right) -1]$  
	\tcp { update $\mathbf{N}$}

\While {$F=0$} 
{ 
	\eIf{$\mathbf{N}\left(\mathbf{d}\left(\mathbf{N}\right)-1\right)\geq n$}{
	$F=1$
	}
	{
	Call Algorithm \ref{algo Sampling patches of path} and  obtain $\omega =(s_{1},...,s_{\Delta })$, and compute $M_{0}$
		
		\If {$M_{0}\leq m$} 
		{
		Set $\mathbf{s=[s,} \mathbf{L}(\mathbf{s}) + \omega ]$
	
		Set $\mathbf{N}=[\mathbf{N},\mathbf{d}\left( \mathbf{s}\right) -1]$
		}
	
	} 

}

\For {$i=0,...,n$} 
{
$M_{i}=\max (\mathbf{s}\left( i+1\right) ,\mathbf{s}%
\left( i+2\right) ,....,\mathbf{s}\left( \mathbf{d(s)}\right) )-\mathbf{s}%
\left( i+1\right) $\\
$S_{i}\left( \mu \right) =\mathbf{s}\left( i+1\right) $
}

\medskip
\textbf{Output} $\left( S_{k}\left(\mu \right), M_k \,:1\leq k\leq n\right) $

\end{algorithm}

\bigskip{}

\section{Additional considerations: increments with infinite variance and
computing truncated tilted distributions}

\label{Sec_Add_Cons}

\subsection{Assuming that $E\left\vert X\right\vert ^{\protect\beta}<\infty$
for $\protect\beta\in(1,2]$}

We will now discuss how to relax the assumption that $E\left\vert
X\right\vert ^{\beta}<\infty$ for $\beta>2$ and assume only that $%
E\left\vert X\right\vert ^{1+\varepsilon}<\infty$ for $\varepsilon\in(0,1]$.

The development can be easily adapted. In order to facilitate the
explanation let us discuss the adaptation in the setting of Subsection \ref%
{Subsec_Discussion_Param}, which leads somewhat weaker bounds that those
assumed in (\ref{C_I_1}) to (\ref{C_I_2}) but strong enough to adapt the
conclusion in Lemmas \ref{lem 3PA_k over g_k leq 1} to \ref{lem 3PB_k^c over
g_k leq 1}.

In order to adapt equation (\ref{C_m_2}), for example, we now select $\delta
>0$ small enough so that $1<\alpha \leq \left( 1+\varepsilon \right) \left(
1-\delta \right) $. Then (\ref{C_m_2}) is replaced by
\[
\frac{6\cdot 2^{\alpha }}{\left( \alpha -1\right) \left( m+1\right) ^{\alpha
-1}\mu }\times E\left[ \left( X_{1}^{+}\right) ^{1+\varepsilon }\right] \leq
1.
\]%
These changes yield that inequality (\ref{C_I_1}), which in turn yields the
proof Lemma \ref{lem 3PA_k over g_k leq 1} and Lemma \ref{lem 3PB_k^c over
g_k leq 1}.

As for Lemma \ref{lem PB_k over g_k leq 1}, let us now apply Lemma \ref{lem
log MGF bound} with
\[
A\left( \gamma \right) =\left( \frac{\gamma ^{2}}{2}\cdot \frac{\exp \left(
1\right) }{1-\varepsilon }+2\right) \cdot E\left( \left\vert X\right\vert
^{1+\varepsilon }\right) ,
\]%
and obtain%
\begin{equation}
\exp (\psi _{k}\left( \theta _{k}\right) )\leq \exp \left( A\left( \gamma
\right) \frac{1}{C_{k}}\right) .  \label{eq bound log mgf exponantially}
\end{equation}%
Since $T_{m}$ we have that $S_{T_{m}}\geq \mu T_{m}+m$, and because $%
T_{m}\in \left[ n_{k-1},n_{k}-1\right] $ we conclude that%
\[
S_{T_{m}}\geq \mu n_{k-1}+m=C_{k}.
\]%
Therefore, on $T_{m}\in \left[ n_{k-1},n_{k}-1\right] $
\[
\exp (-\theta _{k}S_{T_{m}}+T_{m}\psi _{k}\left( \theta _{k}\right) )\leq
\exp (-\theta _{k}C_{k}+n_{k}\psi _{k}\left( \theta _{k}\right) )\leq \exp
(-\gamma C_{k}^{\delta }+A\left( \gamma \right) \frac{n_{k}}{C_{k}})\leq
\exp (-\gamma C_{k}^{\delta }+2A\left( \gamma \right) /\mu ),
\]%
where the last inequality was obtained from the bound $n_{k}/C_{k}\leq
n_{k}/(n_{k-1}\mu )$. So, we conclude, letting $z=\mu n_{k-1}$, that

\[
\frac{3\exp (-\gamma C_{k}^{\delta }+2A\left( \gamma \right) /\mu )}{g\left(
k\right) }\leq \frac{3\left( 2z+m\right) ^{\alpha }}{\left( \alpha -1\right)
\left( m+1\right) ^{\alpha -1}z}\exp \left( -\gamma \left( m+z\right)
^{\delta }+2A\left( \gamma \right) /\mu \right) .
\]%
Further, if $u=\gamma ^{1/\delta }(m+z)$, following the development in
Subsection \ref{Subsec_Discussion_Param}, we arrive at%
\begin{eqnarray*}
\frac{3\exp (-\gamma C_{k}^{\delta }+2A\left( \gamma \right) /\mu )}{g\left(
k\right) } &\leq &\frac{3\cdot 2^{\alpha }\gamma ^{-\alpha /\delta }}{\left(
\alpha -1\right) \left( m+1\right) ^{\alpha -1}\mu }\exp \left( 2A\left(
\gamma \right) /\mu \right) \max_{u\geq \gamma ^{1/\delta }m}u^{\alpha }\exp
\left( -u^{\delta }\right) \\
&\leq &\frac{3\cdot 2^{\alpha }\gamma ^{-\alpha /\delta }}{\left( \alpha
-1\right) \left( m+1\right) ^{\alpha -1}\mu }\exp \left( 2A\left( \gamma
\right) /\mu \right) \left( \frac{\alpha }{\delta }\right) ^{\alpha }\exp
\left( -\left( \frac{\alpha }{\delta }\right) ^{\delta }\right) .
\end{eqnarray*}%
For every $\gamma >0$ we can select $m$ large enough to make the right hand
side less than one and this yields the adaptation of the proof of Lemma \ref%
{lem PB_k over g_k leq 1} to the case $\beta \in (1,2]$.

This discussion implies that Algorithm \ref{algo last} provides unbiased
samples from $\left( M_{k},S_{k}\left( \mu\right) :0\leq k\leq n\right) $ in
finite time with probability one. Nevertheless, if $\varepsilon\in(0,1]$, we
have that $\alpha\leq\left( 1-\delta\right) \left( 1+\varepsilon\right) <2$
and therefore the expected number of function evaluations required to sample
$J$ in Algorithm \ref{algo Sampling M_0} is bounded from below by
\[
\sum_{k}n_{k}^{2}P\left( Y>\mu n_{k}+m\right) =\infty.
\]
Therefore, the expected running time of Algorithm \ref{algo last} is not
finite.

\subsection{The issue of evaluating $\protect\psi _{k}\left( \protect\theta %
_{k}\right) $\label{SubSam_P}}

We are concerned with the evaluation of (\ref{BND_PK2}), that is, during the
course of the algorithm we must decide if
\begin{equation}
V\leq 3\cdot \exp \left\{ -\theta _{k}S_{T_{m}}+T_{m}\psi _{k}\left( \theta
_{k}\right) \right\}I(T_{m}\in \left[ n_{k-1},n_{k}-1\right] ,A_{k}^{c},B_{k})  \label{IN_V_phi}
\end{equation}%
where $V\sim U\left( 0,1\right) $ independent of $S_{T_{m}}$ and $T_{m}$.
In order to decide if inequality (\ref{IN_V_phi}) holds one does not need to
compute $\eta _{k}:=\exp (\psi _{k}\left( \theta _{k}\right) )$ explicitly.
It suffices to construct a pair of monotone sequences $\{\eta _{k}^{+}\left(
n\right) :n\geq 0\}$ and $\{\eta _{k}^{-}\left( n\right) :n\geq 0\}$ such
that $\eta _{k}^{+}\left( n\right) \searrow \eta _{k}$ as $n\rightarrow
\infty $ and $\eta _{k}^{-}\left( n\right) \nearrow \eta _{k}$ as $%
n\rightarrow \infty $. It is important, however, to have the sequences
converging at a suitable speed. 
For example, it is not difficult to show that if%
\[
0\leq \eta _{k}^{+}\left( n\right) -\eta _{k}^{-}\left( n\right) \leq
c_{0}n^{-r}
\]%
for $r>2$, and the evaluation of $\eta _{k}^{+}\left( n\right) $, $\eta
_{k}^{-}\left( n\right) $ takes $O\left( l\left( k\right) n\right) $
function evaluations then the expected number of function evaluations
required to terminate Algorithm \ref{algo Sampling M_0} will be bounded if $%
\sum_{k}g\left( k\right) l\left( k\right) <\infty $ (this holds if $%
E\left\vert X\right\vert ^{\beta }<\infty $ for $\beta >2$ and $l\left(
k\right) =O\left( n_{k}\right) $, given our selection of $\alpha >2$). Note
the requirement on quadratic convergence ($r>2$). Sequences $\eta
_{k}^{+}\left( \cdot \right) $ and $\eta _{k}^{-}\left( \cdot \right) $ can
be constructed assuming the existence of a smooth density for $X$ using
quadrature methods. Nevertheless, we do not want to impose the existence of
a smooth density and thus we shall advocate a different approach for
estimating $\psi _{k}\left( \theta _{k}\right) $, based on coupling.

The approach that we advocate proceeds as follows. First, note that if $X$
has a lattice distribution, with span $h>0$, then $\psi_{k}\left( \theta
_{k}\right) $ can be evaluated with $O\left( C_{k}^{1-\delta}/h\right) $
function evaluations given $k$. So, the expected number of function
evaluations involved in implementing Algorithm \ref{algo last} and deciding (%
\ref{IN_V_phi}) is bounded, since $\sum g\left( k\right) C_{k}^{1-\delta
}=O(\sum g\left( k\right) n_{k})<\infty$.

Now, suppose that the distribution of $X$ is non-lattice. The idea is to
construct a coupling between $X_{j}\left( \mu\right) $ and a suitably
defined lattice-valued random variable $X_{j}^{\prime}\left( \mu^{\prime
}\right) $ so that $X_{j}\left( \mu\right) \leq X_{j}^{\prime}\left(
\mu^{\prime}\right) $, $EX_{j}^{\prime}=0$, and $\mu^{\prime}>0$. We will
simulate the random walk associated to the $X_{j}^{\prime}\left( \mu^{\prime
}\right) $'s, namely, $S_{j}^{\prime}\left( \mu^{\prime}\right) $ and the
associated sequence $\left( M_{j}^{\prime}:j\geq0\right) $, jointly with $%
\left( S_{j}\left( \mu\right) :0\leq j\leq n\right) $. Since $\max
\{S_{j}^{\prime}\left( \mu^{\prime}\right) :j\geq l\}\searrow-\infty$ as $%
l\rightarrow\infty$ we will be able to sample $(M_{k}:k\leq n)$ after
computing $N$ such that $\max\{S_{j}^{\prime}\left( \mu^{\prime}\right)
:j\geq N\}\leq\min\{S_{k}\left( \mu\right) :k\leq n\}$. We now proceed to
describe this strategy in detail.

Given $h>0$ define $X_{j}^{\prime}=h\lfloor X_{j}/h\rfloor-E(h\lfloor
X_{j}/h\rfloor)$; we omit the dependence on $h$ in $X_{j}^{\prime}$ for
notational convenience. In addition, let $\mu^{\prime}=\mu-E(h\lfloor
X_{j}/h\rfloor)-h$. Since $E(h\lfloor X_{j}/h\rfloor)<0$ for each $h>0$, if
we also select $h\leq\mu$ we have $\mu^{\prime}>0$. Define
\[
X_{j}^{\prime}\left( \mu^{\prime}\right) =X_{j}^{\prime}-\mu^{\prime
}=h\lfloor X_{j}/h\rfloor-\mu+h,
\]
and note that
\[
X_{j}^{\prime}\left( \mu^{\prime}\right) \geq X_{j}\left( \mu\right) .
\]
We then define the corresponding random walks $S_{n}^{\prime}=X_{1}^{\prime
}+...+X_{n}^{\prime}$, $S_{n}^{\prime}\left( \mu^{\prime}\right)
=S_{n}^{\prime}-n\mu^{\prime}$ with $S_{0}^{\prime}=0$ and
\[
M_{n}^{\prime}\left( \mu^{\prime}\right) =\sup\{S_{k}^{\prime}\left(
\mu^{\prime}\right) :k\geq n\}-S_{n}^{\prime}\left( \mu^{\prime}\right) .
\]
The following algorithm summarizes our strategy to simulate $(S_{k}\left(
\mu\right) ,M_{k}:0\leq k\leq n)$ when $\psi_{k}\left( \theta_{k}\right) $
cannot be computed exactly.

\begin{algorithm}
\caption{Strategy for simulating $(S_{k}\left(
\mu \right) ,M_{k}:0\leq k\leq n)$ } \label{Procedure sampling path
trancated increments}
\KwIn{Same as Algorithm \ref{algo Sampling M_0} but
for $X_{j}^{\prime}$ and $h\in(0,\mu)$ } \KwOut
{$\left(S_{k}\left(\mu\right), M_k \, :1\leq k\leq n\right) $} Call
Algorithm \ref{algo last} and obtain $\omega^{\prime}= (S_{k}^{\prime
}\left( \mu ^{\prime }\right) ,M_{k}^{\prime }:0\leq k\leq n)$\newline
Given $\omega^{\prime}= \left( S_{k}^{\prime }\left( \mu ^{\prime }\right)
:0\leq k\leq n\right) $ sample $\omega = (S_{k}:0\leq k\leq n)$ \tcp* {this
is done by sampling $X_{k}$ given the simulated outcome of $\lfloor
X_{k}/h\rfloor $} Set $M_{n}^{-}:=\min (S_{k}\left( \mu \right) :0\leq k\leq
n)$\newline
Using Algorithm \ref{algo last}, continue sampling $(S_{k}^{\prime }\left(
\mu ^{\prime }\right) ,M_{k}^{\prime }:n\leq k\leq N)$, where $N=\inf
\{k\geq n:M_{k}^{\prime }+S_{k}^{\prime }\left( \mu ^{\prime }\right) \leq
M_{n}^{-}\} $\newline
Given $\left( S_{k}^{\prime }\left( \mu ^{\prime}\right) :n\leq k\leq
N\right) $, sample $\left( S_{k}:n\leq k\leq N\right) $\newline
Set $M_{k}=\max \{S_{j}\left( \mu \right) :k\leq j\leq N\}-S_{k}\left( \mu
\right) $ for $0\leq k\leq n$\newline
\textbf{Output} $\left( S_{k}\left( \mu \right) ,M_{k}:0\leq k\leq n\right) $%
.
\end{algorithm}

The complexity analysis (i.e. finite expected running time if $E\left\vert
X_{1}\right\vert ^{2+\varepsilon}<\infty$) carries over since $%
EM_{0}^{\prime }<\infty$, $E|\min\{S_{k}(\mu):k\leq n\}|<\infty$, and
therefore $EN<\infty$, with $N$ defined in Algorithm \ref{Procedure sampling
path trancated increments}.

\bigskip 

\section{Numerical Example}

\label{Sec: Numerical}

We will now illustrate our algorithm by revisiting the example that was
described in the Introduction. 
This example considers the waiting time
sequence that corresponds to the single-server queue. 
Recall that this
sequence $\left(W_n:\,n\geq0\right)$ can be generated by the
so-called Lindley's recursion
\begin{equation}  \label{Lindley's recursion}
W_n = \left(W_{n-1}+X_{n}-\mu\right)^{+}
\end{equation}
and when in steady state, the $W_n$'s are equal in distribution to
\[
M_0=\max\limits_{m\geq0}\lbrace S_{m} \left(\mu\right)\rbrace
\]
To demonstrate the capability of our algorithm, we chose 
a sequence of $X_n$'s of the form
\begin{equation}  \label{eq example increamet def}
X_n=h\left\lfloor\frac{c}{h}V_n\right\rfloor-E\left(h\left\lfloor\frac{c}{h}%
V_n\right\rfloor\right)=:Y_n-E\left(Y_n\right)
\end{equation}
where $V_n \sim Pareto\left(\alpha'\right)$, that is,
$$
P\left(V>t\right)=\frac{1}{\left(1+t\right)^{\alpha'}} \hspace{1cm} t>0
$$
The parameters $\alpha'$, $c$, and $h$ can be changed in order to test the algorithm in different scenarios. 
$\alpha'>2$ determines how heavy the tail of the increments is, 
$h>0$ is the lattice parameter (the non-lattice case is where $h\rightarrow 0$), 
and $c>0$ controls the mean
of $Y_n$.

\subsection{Choice of Parameters}

As mentioned at the end of Subsection \ref{Subsec_Discussion_Param}, we used
the Excel solver in the following way: given our selection of $\alpha \in \left( 2,4\right) $, we
picked $\delta \in (0,1/2]$, $\gamma \geq 0$, and $m\geq 0$ so as to minimize
$m$ subject to (\ref{C_I_1}) and (\ref{C_I_2}). The input parameters $\mu $,
$\alpha' $, $h$, and $c$ 
are chosen to
test conditions ranging from light to heavy traffic (controlled primarily by
the parameter $\mu $), and from heavy tails to relatively lighter tails
(which are controlled by the parameter $\alpha' $).

We conclude our discussion by providing a formal comparison against the
relaxation time of the Markov chain $\{W_{n}:n\geq 0\}$ in heavy-traffic.
We chose a formal comparison because a rigorous computation of the exact relaxation time of the single-server queue is not available (to the best of
our knowledge) at the level of generality at which our algorithm works, although bounds have been studied, as is the case in \cite{FS06}.
We have argued that our algorithm is sharp, in the sense that it is applicable
under close to minimal conditions required for the stability of the
single-server queue. We believe that the heavy-traffic analysis provides yet
another interesting perspective.

Assuming that $\beta >2$ (i.e. the increments have finite variance), in
heavy traffic, as $\mu \rightarrow 0$, it is well known that at temporal
scales of order $O\left( 1/\mu ^{2}\right) $ and spatial scales of order $%
O\left( 1/\mu \right) $ Lindley's recursion can be approximated by a one
dimensional reflected Brownian motion (RBM). 
In fact, the approximation
persists also for the corresponding stationary distribution (which converges
after proper normalization to an exponential distribution, which is the
stationary distribution of RBM (see \cite{KA61}, for example)). The relaxation time of $\{W_{n}:n\geq
0\}$ is of order $O\left( 1/\mu ^{2}\right) $ as $\mu \rightarrow 0$.

The running time analysis of our algorithm involves the ``downward" patches, which take $O\left( m\right) $ random
numbers to be produced. We also need to account for the simulation of the
Bernoulli trials for each ``upward" patch,
which requires the generation of $K$ under $g\left( \cdot \right) $, and a
total of $C_{0}=O(\sum_{k=1}^{\infty }n_{k}g\left( k\right) )$ expected
random numbers to be simulated. This analysis holds because the number
of proposals required to sample $P_{k,0}$, $P_{k,1}$ and $P_{k,2}$ remains
bounded also as $\mu \rightarrow 0$. 
Therefore, the actual $X_{i}$'s conditional on the $E_{i}
$'s can be easily simulated. A similar strategy can be implemented for $%
P_{k,2}$.

Consequently, the over all cost of our algorithm is driven by $%
C_{0}=O\left( \mu ^{-2}m\right) $. We also need to ensure that $m$ is
selected so that (\ref{C_I_1}) and (\ref{C_I_2}) are satisfied. From the
analysis of (\ref{C_m_2}) and (\ref{C_m_4}), we see that $m=O\left( \mu
^{-1}\right) $ is always a possible choice. However, this choice can be
improved if one can select a large $\alpha $ , which in turn is feasible as
long as $z^{\alpha }P\left( X>z^{1-\delta }\right) =O\left( 1\right) $. In
particular, we can choose $m=O\left( 1/\mu ^{1/(\alpha -1)}\right) $,
provided that $\delta $ is chosen sufficiently close to unity in order to
satisfy (\ref{C_m_4}). Our exact sampling algorithm in heavy traffic has
a running time that is not worse that $O\left( 1/\mu ^{3}\right) $ and it
can be arbitrarily close to the relaxation time $O\left( 1/\mu ^{2}\right) $
of the chain $\{W_{n}:n\geq 0\}$.

\subsection{Simulation Results}

We tested the algorithm in four different cases in which we changed the nature of the random walk increments and the traffic intensity.
By picking $\alpha'=2.9$ and $\alpha'=7$, we considered heavy tailed increments and relatively lighter tailed increments, respectively. 
By changing the value of $c$, we changed the traffic intensity $\rho$, which is given by
$$
\rho =\frac{E\left(h\left\lfloor\frac{c}{h}V\right\rfloor\right)}
{E\left(h\left\lfloor\frac{c}{h}V\right\rfloor\right)+\mu}
\approx
\frac{cE\left(V\right)}
{cE\left(V\right)+\mu}
$$

Throughout all scenarios we used the parameters 
$$
L=1.1, \hspace{0.5cm}
h=0.1, \hspace{0.5cm}
\mu=1 \hspace{0.5cm}
\normalfont{and} \hspace{0.5cm}
\delta=0.38
$$

The rest of the parameters were chosen as follows:
\begin{center}
\begin{tabular}[h]{|l|l|l|l|l|l|l|l|l|}
\cline{1-9} 
  &
  \multicolumn{4}{|c|}{$\rho=0.3$}&
  \multicolumn{4}{|c|}{$\rho=0.8$}  
  \\
\cline{2-9}
&
$\alpha$ & $\gamma$ & $c$ & $m$  
&
$\alpha$ & $\gamma$ & $c$ & $m$ \\
\hline

\multirow{1} {*} {$\alpha'=7$}
& 
4 & 1.7 & 3 & 16 
&
4 & 0.75 & 25 & 217 \\
\hline

\multirow{1} {*} {$\alpha'=2.9$}
&
2.01 & 1.24 & 0.85 & 35 
&
2.01 & 0.74 & 8 & 400 \\ 
\hline
\end{tabular}
\end{center}

In each of the above cases we generated 100,000 exact replicas of $M_0$ and compared it with the chain $\left\lbrace W_n:\,0\leq n\leq l\right\rbrace$, where $l$ was picked to fit the scenario. 
To analyze the output of the chain, we used batches with varying sizes. In the light traffic case, for both $\alpha'=2.9$ and $\alpha'=7$, we used $l=10^6$ with batches of size $25$. 
In the heavy traffic scenario, we used $l=2\cdot 10^6$ with batches of size $50$ for $\alpha'=7$, and $l=4\cdot 10^6$ with batches of size $100$ for $\alpha'=2.9$

\noindent We summarized the result in the following table 
(see also Figure \ref{fig:Numeric Expectation}):

\begin{center}
\begin{tabular}[h]{|b{1.2cm} l|b{1.2cm}|b{1.2cm}|b{0.9cm}|b{1.2cm}|b{1.2cm}|b{0.9cm}| }
  \cline{1-8} 
    & &
  \multicolumn{3}{|c|}{\vspace{0.05cm} $\rho=0.3$}&
  \multicolumn{3}{|c|}{$\rho=0.8$}  
  \\
   
  \cline{3-8} 
  &
  & \vspace{0.2cm} LCI & UCI & RT 
  & LCI & UCI & RT \\ [0.2cm]
  \hline
 
 \multirow{2} {*} {$\alpha'=7$}
  & \multicolumn{1}{|l|}{Exact sampler} 
  & \vspace{0.2cm} 0.0709 & 0.0726
  & $\approx 1.5$
  & 10.9092 & 11.1159 
  & $\approx 10$ \\
 	[0.3cm] \cline{2-8} 
  & \multicolumn{1}{|l|}{Batch mean}
  & \vspace{0.2cm} 0.0701 & 0.0734 
  & $\approx 1$
  & 10.7542 & 11.1152 
  & $\approx 3$ \\
   [0.3cm]\hline

  \multirow{2}{*} {$\alpha'=2.9$} 
  &\multicolumn{1}{|l|}{Exact sampler} 
    & \vspace{0.2cm} 0.6505   & 0.7336
    & $\approx 3$  
    & 28.7925  & 29.6832
    & $\approx 15$     \\
   	[0.3cm] \cline{2-8} 
  &\multicolumn{1}{|l|}{Batch mean}
     & \vspace{0.2cm} 0.5344   & 0.7429
     & $\approx 1$
     & 28.7908  & 30.1681
     & $\approx 4$    \\
     [0.3cm]\hline
   	
\end{tabular}

\vspace{0.1cm}
LCI/UCI=Lower/Upper 95\% Confidence Interval. \hfill RT= Running Time (in minutes).
\end{center}

In the numerical examples we see that the IID replications of $M_0$ 
appear to be a reasonable approach to steady-state estimation, especially in light traffic. 
The performance deteriorates somewhat in heavy traffic, which is expected given our earlier discussion on running time in heavy traffic. 
Nevertheless, it is important to note that while our procedure does not have any bias, batch means do not provide control on the bias with absolute certainty. 
Overall, we feel that a few minutes of additional running time in exchange for total bias deletion is not an onerous price to pay.
Therefore, our procedure is not only of theoretical interest (as the first exact sampler for a general single-server queue), but of practical value as well.

\begin{figure}[tbp]
        \centering
 		\begin{minipage}{0.49\textwidth}
 		\begin{center}
 		$\alpha'=7$
 		\end{center}
 		\begin{minipage}[b]{0.45\textwidth}
 		\includegraphics[scale=0.14]{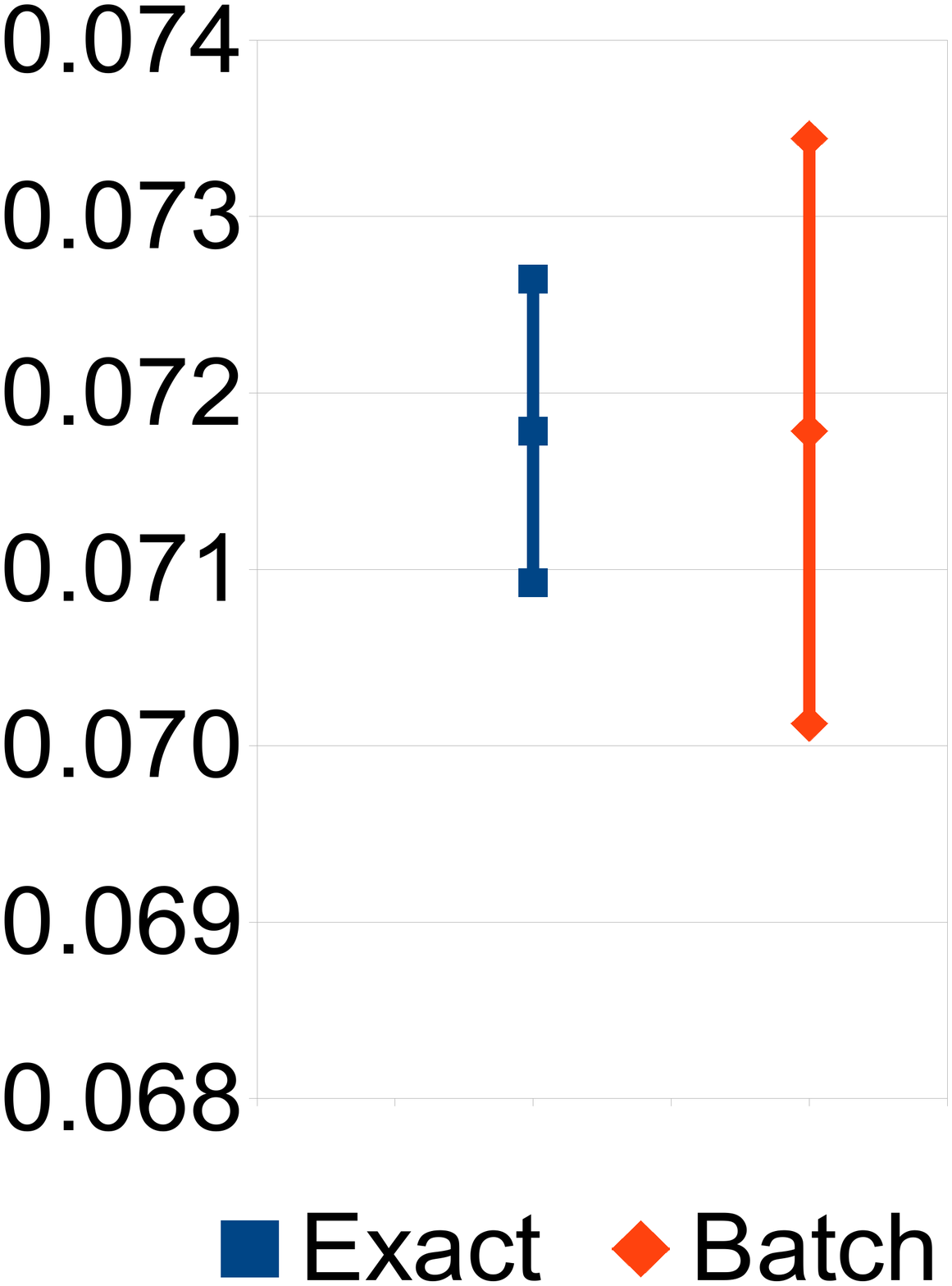}
 		\begin{center}
 		 $\rho=0.3$
 		\end{center}
 		\end{minipage}
 		\begin{minipage}[b]{0.45\textwidth}
 		\includegraphics[scale=0.14]{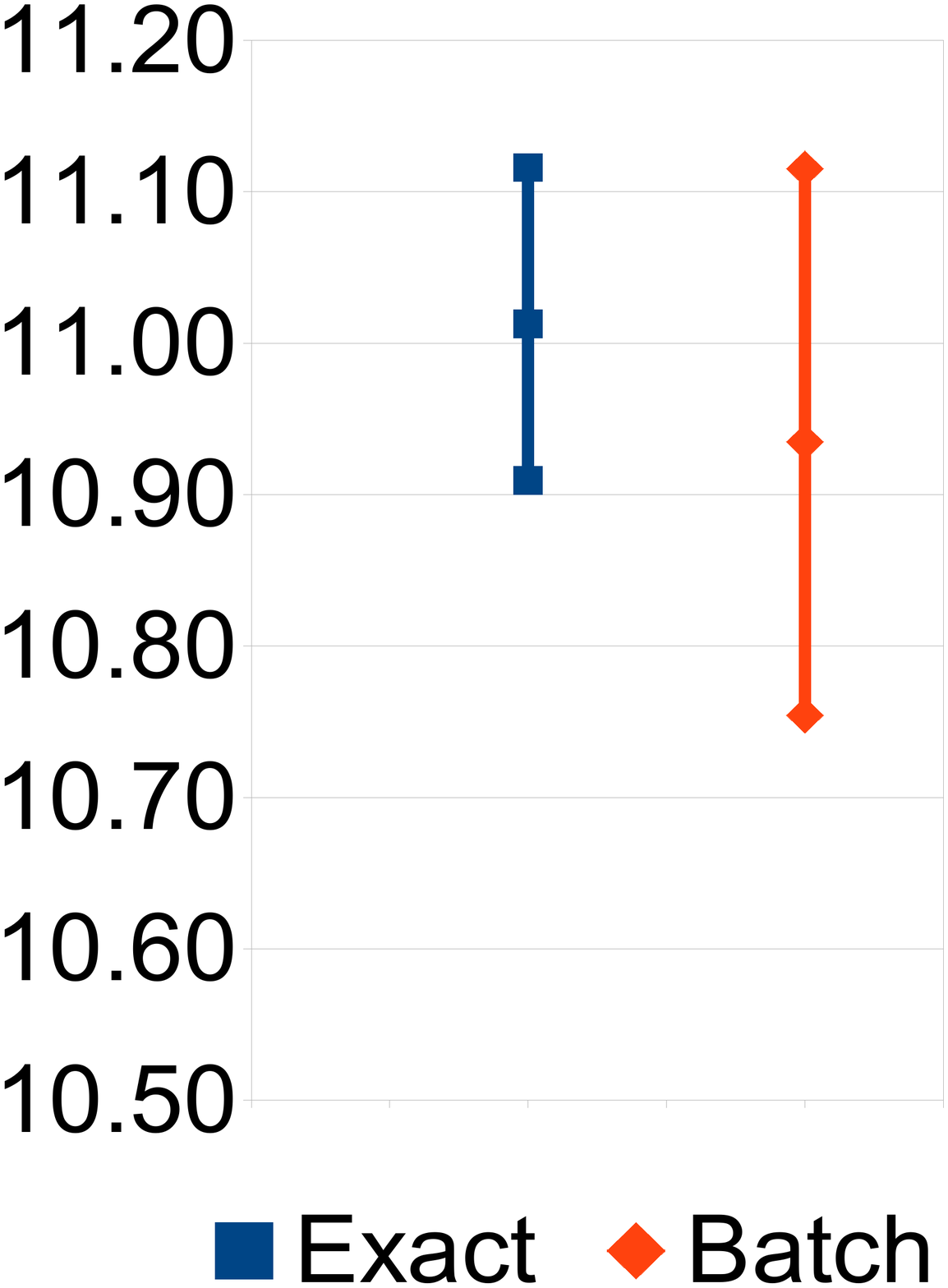}
 		\begin{center}
 		$\rho=0.8$
 		\end{center}
 		\end{minipage}
 		\end{minipage}      
        \begin{minipage}{0.49\textwidth}
         		\begin{center}
         		$\alpha'=2.6$
         		\end{center}
         		\begin{minipage}[b]{0.45\textwidth}
         		\includegraphics[scale=0.14]{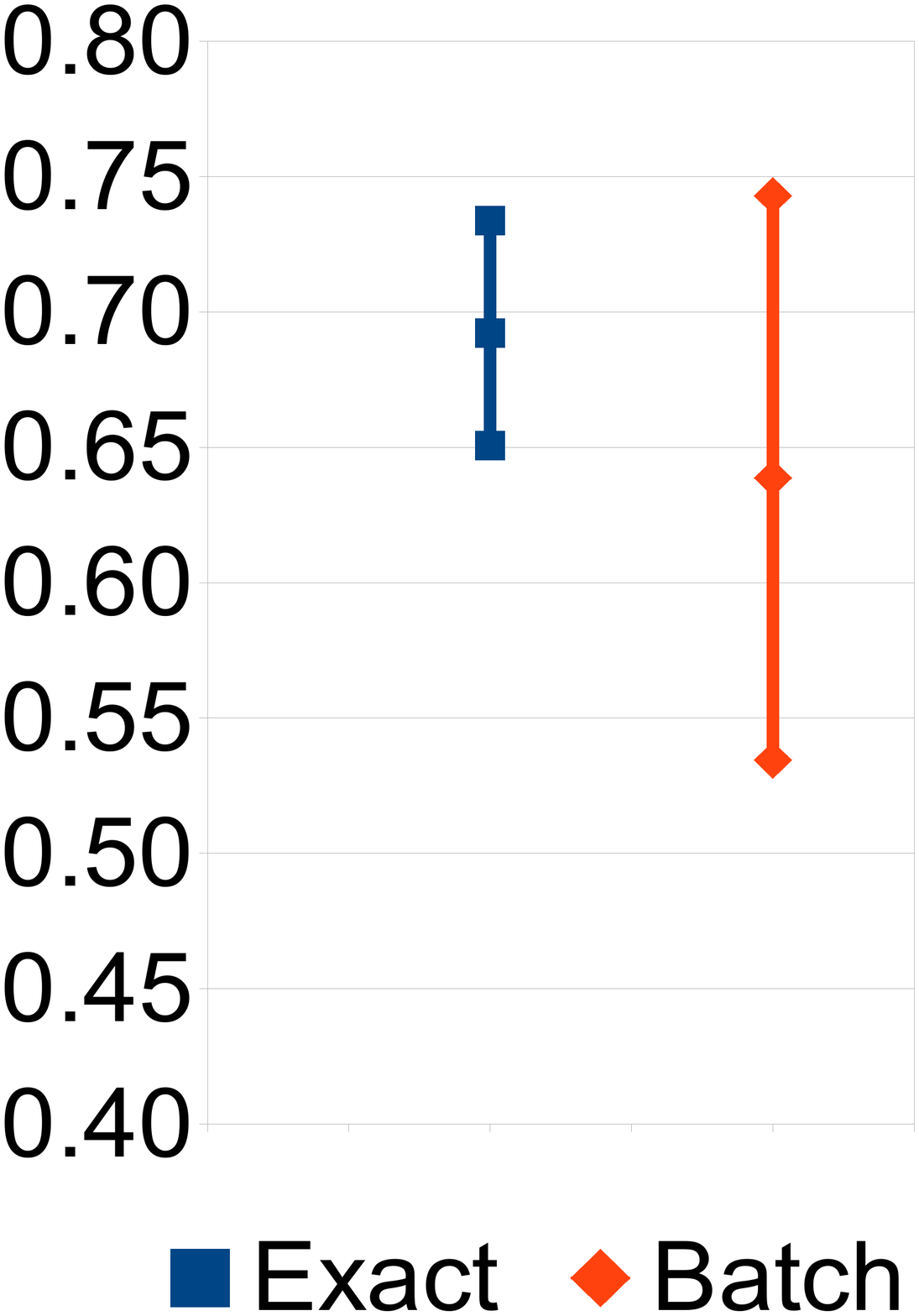}
         		\begin{center}
         		 $\rho=0.3$
         		\end{center}
         		\end{minipage}
         		\begin{minipage}[b]{0.45\textwidth}
         		\includegraphics[scale=0.14]{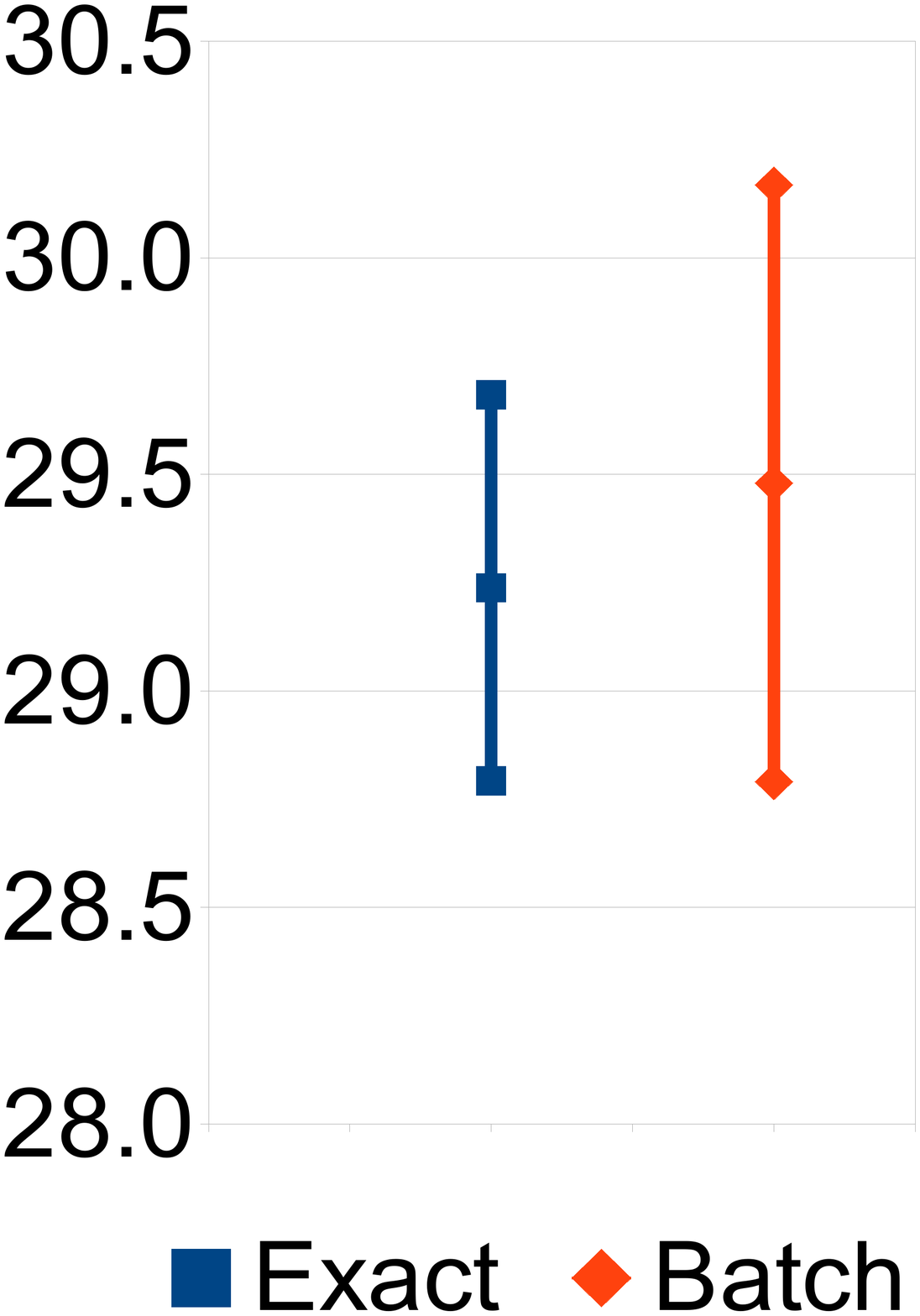}
         		\begin{center}
         		$\rho=0.8$
         		\end{center}
         		\end{minipage}
         		\end{minipage}
         		
\caption{Exact sampler mean $E\left(M_0\right)$ VS. batches mean of $\left\lbrace W_n:0\leq n\leq l\right\rbrace$, along with the corresponding 95\% confidence intervals.} 
\label{fig:Numeric Expectation}%
\end{figure}

\section{Conclusions}

The work presented in this paper was motivated by the important role that
single-server queue plays in many applications that use the DCFTP method
as well as the challenge of efficiently dealing with random walks involving
heavy-tailed increments. We developed an exact simulation method that can be
used to simulate the stationary waiting-time sequence of a single-server
queue backward in time, jointly with the input process of the queue. We
provided an algorithm, which is easy to implement, that has a finite
expected termination time under nearly minimal assumptions.

\medskip

\appendix

\section*{APPENDIX}

\section{Discussion on the generality of the assumptions imposed and
selection of parameters 
}

\label{Subsec_Discussion_Param}

In this section we will  argue that 
the inequalities (\ref{C_m_3})-(\ref{C_I_2})  can always be satisfied under
our underlying assumption that $E\left\vert X_{k}\right\vert ^{\beta
}<\infty $ for $\beta =2+\varepsilon >2$ (the case $\beta >1$ is discussed
in Section \ref{Sec_Add_Cons}). First, the selection of $L$ in (\ref{C_m_0})
is always feasible, as indicated earlier $L=1$ is most of the time feasible;
for example $L=1$ will be feasible if $X_{1}$ is non-lattice.

Clearly the selection of $m$ satisfying (\ref{C_m_3}) is always feasible.
Now, note that we can always select $\delta >0$ so that

\begin{equation}
2<\alpha \leq \left( 2+\varepsilon \right) \left( 1-\delta \right) .
\label{Cond_Alpha}
\end{equation}%
Then observe that if $m\geq 1$, applying Chebyshev's inequality,%
\begin{eqnarray*}
&&\frac{6\left( 1+2\mu z+m\right) ^{\alpha }}{\left( \alpha -1\right) \left(
m+1\right) ^{\alpha -1}\mu }P(X>\left( \mu z+m\right) ^{1-\delta }) \\
&\leq &\frac{6\cdot 2^{\alpha }\left( \mu z+m\right) ^{\alpha }}{\left(
\alpha -1\right) \left( m+1\right) ^{\alpha -1}\mu }\times \frac{E\left[
\left( X_{1}^{+}\right) ^{2+\varepsilon }\right] }{\left( \mu z+m\right)
^{\left( 2+\varepsilon \right) (1-\delta )}}\leq \frac{6\cdot 2^{\alpha }}{%
\left( \alpha -1\right) \left( m+1\right) ^{\alpha -1}\mu }\times E\left[
\left( X_{1}^{+}\right) ^{2+\varepsilon }\right] .
\end{eqnarray*}%
So, condition (\ref{C_I_1}) is automatically satisfied if $m$ is chosen
sufficiently large so that%
\begin{equation}
\frac{6\cdot 2^{\alpha }}{\left( \alpha -1\right) \left( m+1\right) ^{\alpha
-1}\mu }\times E\left[ \left( X_{1}^{+}\right) ^{2+\varepsilon }\right] \leq
1.  \label{C_m_2}
\end{equation}

Next, for (\ref{C_I_2}), we optimize over $z$ and obtain
\begin{equation}
\frac{z}{\left( m+z\right) ^{2(1-\delta )}}\leq \frac{1}{m^{1-2\delta }}%
\cdot \frac{\left( 1-2\delta \right) ^{1-2\delta }}{\left( 2\left( 1-\delta
\right) \right) ^{2\left( 1-\delta \right) }},  \label{C_Aux_B}
\end{equation}%
for all $\delta \in (0,1/2]$. Use Chebyshev's inequality, together with (\ref%
{C_Aux_B}), and the change of variable $u=\gamma ^{1/\delta }(m+z)$ to
obtain
\begin{eqnarray*}
&&\frac{3\left(1+2z+m\right) ^{\alpha }}{\left( \alpha -1\right) \left(
m+1\right) ^{\alpha -1}z}\exp \left( -\gamma \left( m+z\right) ^{\delta }+%
\frac{\gamma ^{2}e^{\gamma }E\left( X^{2}\right) z}{\left( m+z\right)
^{2(1-\delta )}\mu }+4\frac{z}{\mu }P\left( X>\left( z+m\right) ^{1-\delta
}\right) \right) \\
&\leq &\frac{3\left(1+2z+m\right) ^{\alpha }}{\left( \alpha -1\right) \left(
m+1\right) ^{\alpha -1}z}\exp \left( -\gamma \left( m+z\right) ^{\delta }+%
\frac{(\gamma ^{2}e^{\gamma }+4)E\left( X^{2}\right) \left( 1-2\delta
\right) ^{1-2\delta }}{\left( 2\left( 1-\delta \right) \right) ^{2\left(
1-\delta \right) }\mu m^{1-2\delta }}\right) . \\
&\leq &\frac{3\cdot 2^{\alpha }\gamma ^{-\alpha /\delta }}{\left( \alpha
-1\right) \left( m+1\right) ^{\alpha -1}\mu }\exp \left( \frac{(\gamma
^{2}e^{\gamma }+4)E\left( X^{2}\right) \left( 1-2\delta \right) ^{1-2\delta }%
}{\left( 2\left( 1-\delta \right) \right) ^{2\left( 1-\delta \right) }\mu
m^{1-2\delta }}\right) \max_{u\geq \gamma ^{1/\delta }m}u^{\alpha }\exp
\left( -u^{\delta }\right) .
\end{eqnarray*}%
Thus, we can first select $\gamma =1$, for example, and then pick the
smallest $m$ so that

\begin{equation}
\frac{3\cdot 2^{\alpha }}{\left( \alpha -1\right) \left( m+1\right) ^{\alpha
-1}\mu }\exp \left( \frac{7E\left( X^{2}\right) \left( 1-2\delta \right)
^{1-2\delta }}{\left( 2\left( 1-\delta \right) \right) ^{2\left( 1-\delta
\right) }\mu m^{1-2\delta }}\right) \max_{u\geq \gamma ^{1/\delta
}m}u^{\alpha }\exp \left( -u^{\delta }\right) \leq 1.  \label{C_m_4}
\end{equation}%
This can be done numerically or, explicitly by simply by noting (using
elementary calculus) that
\[
\max_{u\geq \gamma ^{1/\delta }m}u^{\alpha }\exp \left( -u^{\delta }\right)
\leq \left( \frac{\alpha }{\delta }\right) ^{\alpha }\exp \left( -\left(
\frac{\alpha }{\delta }\right) ^{\delta }\right) .
\]

In the numerical examples that we will discuss in Section \ref{Sec:
Numerical} we noted that the performance of the algorithm is not too
sensitive to the selection of $\alpha $, and thus we advocate picking $%
\alpha $ somewhat larger than 2, for instance $\alpha \in (2,4]$, but it is
important to constrain $\alpha $ and $\delta $ so that $z^{\alpha }P\left(
X>z^{1-\delta }\right) =O(1)$, due to (\ref{C_I_1}).

It is constraint (\ref{C_I_2}) the one that has the highest impact in the
algorithm's performance and we noted that the selection of $m$, in
particular, was the most relevant parameter. So, we simply used the Excel
solver; given our selection of $\alpha $ we picked $\delta \in (0,1/2]$, $%
\gamma \geq 0$ and $m\geq 0$ so as to minimize $m$ subject to (\ref{C_I_1})
and (\ref{C_I_2}). The optimization is done only once and it took a second.

In Section \ref{Sec: Numerical} we will also argue that the running time of
our algorithm is close to the relaxation time of the Markov chain from a
heavy-traffic perspective.

\section{Proof of Lemma \ref{lem 3PA_k over g_k leq 1}}

\label{appndx pf lem bound A_k}

\begin{proof}
Notice that
\[
\begin{array}{lll}
P\left( A_{k}\right) & \leq & \sum\limits_{j=n_{k-1}}^{n_{k}-1}P\left(
X_{j} >\left( j\mu +m\right) ^{1-\delta }\right) \\
& \leq & n_{k}P\left( X_{1}>\left( n_{k-1}\mu +m\right) ^{1-\delta }\right) .%
\end{array}%
\]%
It is straightforward to verify (using Chebyshev's inequality, the fact that
$E\left\vert X_{1}\right\vert ^{\beta }<\infty $ for $\beta >1$ and the
definition of $n_{k}$) that for any $\delta >0$,%
\[
\sum\limits_{k}n_{k}P\left( X_{1}>\left( n_{k-1}\mu +m\right) ^{1-\delta
}\right) <\infty .
\]%
Now we have for $k\geq 2$
\begin{equation}
\begin{array}{ll}
\frac{3P\left( A_{k}\right) }{g\left( k\right) } & \leq 3\bar{G}\left(
m\right) \frac{n_{k}P(X_{1}>\left( n_{k-1}\mu +m\right) ^{1-\delta })}{%
\int_{m+\mu n_{k-1}}^{m+\mu n_{k}}P\left( Y>s\right) ds} \\
& \leq 3\bar{G}\left( m\right) \frac{n_{k}P(X_{1}>\left( \mu
n_{k-1}+m\right) ^{1-\delta })}{\mu n_{k-1}P\left( Y>m+n_{k}\right) } \\
& =6\bar{G}\left( m\right) \frac{P(X_{1}^{+}>\left( \mu n_{k-1}+m\right)
^{1-\delta })}{\mu P\left( Y>m+\mu n_{k}\right) }\leq \frac{6\left(
1+2\mu n_{k-1}+m\right) ^{\alpha }}{\left( \alpha -1\right) \left( m+1\right)
^{\alpha -1}\mu }P\left( X>\left( \mu n_{k-1}+m\right) ^{1-\delta }\right)
\leq 1%
\end{array}%
\end{equation}%
Making $z=\mu n_{k-1}=\mu 2^{k-2}$ and using (\ref{C_I_1}) we obtain the
conclusion of the lemma.
\end{proof}


\section{Proof of Lemma \ref{lem PB_k over g_k leq 1}}

\label{Appndx pf lemma bound exp tilting}

Before we prove Lemma \ref{lem PB_k over g_k leq 1}, we will first introduce
an auxiliary lemma, which will be proved at the end of this section.

\begin{lemma} 
\label{lem log MGF bound} Set ${\theta =\gamma /u}^{1-\delta }$ \textup{\
for }$\delta \in \left( 0,1\right) $\textup{, }$u,\gamma >0$ and\textup{\
suppose that }$E\left( X\right) =0$. If $E\left( \left\vert X\right\vert
^{1+\varepsilon }\right) <\infty $ for some $\varepsilon \in \left(
0,1\right) $ and%
\begin{equation}
\frac{E\left( \left\vert X\right\vert ^{1+\varepsilon }\right) }{u^{\left(
1-\delta \right) \left( 1+\varepsilon \right) }}\leq \frac{1}{2},
\label{IN1}
\end{equation}%
then
\begin{equation}
E\left[ \exp (\theta X)\;|\;X\leq u^{1-\delta }\right] \leq \exp \left\{
\frac{A}{u^{\left( 1-\delta \right) \left( 1+\varepsilon \right) }}\right\}
\label{INB1}
\end{equation}%
with
\begin{equation}
A=\left( \frac{\gamma ^{2}}{2}\cdot \frac{\exp \left( \gamma \right) }{%
1-\varepsilon }+2\right) \cdot E\left( \left\vert X\right\vert
^{1+\varepsilon }\right) .  \label{IN1NV}
\end{equation}%
Moreover, if $E\left( X^{2}\right) <\infty $ and
\begin{equation}
\frac{E\left( X^{2}\right) }{u^{2\left( 1-\delta \right) }}\leq \frac{1}{2}
\label{IN2}
\end{equation}%
then%
\begin{equation}
E\left[ \exp (\theta X)\;|\;X\leq u^{1-\delta }\right] \leq \exp \left(
\frac{\gamma ^{2}\exp \left( \gamma \right) E(X^{2})}{2u^{2\left( 1-\delta
\right) }}+2P\left( X>u^{1-\delta }\right) \right) \leq \exp \left\{ \frac{A%
}{u^{2\left( 1-\delta \right) }}\right\} ,  \label{INB2}
\end{equation}%
with
\begin{equation}
A=\left( \frac{\gamma ^{2}\exp \left( \gamma \right) }{2}+2\right) \cdot
E\left( X^{2}\right) .  \label{IN2FV}
\end{equation}%
If in addition $u\geq 1$ and $0<\delta \leq \varepsilon /2$ then from (\ref%
{INB1}) we obtain
\begin{equation}
E\left[ \exp (\theta X)\;|\;X\leq u^{1-\delta }\right] \leq \exp \left(
\frac{A}{u}\right) ,  \label{INB3}
\end{equation}%
and if $EX^{2}<\infty $ inequality (\ref{INB3}) follows from (\ref{INB2})
choosing $0\leq \delta \leq 1/2$.
\end{lemma}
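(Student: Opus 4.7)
My plan is to prove both conclusions (\ref{INB1}) and (\ref{INB2}) in parallel via a single three-step recipe: (i) a Taylor-type pointwise inequality for $e^{\theta X}-1-\theta X$ valid on the truncation set, (ii) cancellation of the first-order contribution using $E(X)=0$, and (iii) a careful handling of the normalization $1/P(X\le u^{1-\delta})$ through the elementary estimate $-\log(1-p)\le 2p$ for $p\le 1/2$. The choice $\theta=\gamma/u^{1-\delta}$ was made precisely so that on $\{X\le u^{1-\delta}\}$ we have $\theta X\le \gamma$.

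First I would establish the relevant pointwise bounds. For the finite-variance case, the elementary inequality $e^{y}\le 1+y+(y^{2}/2)e^{\gamma}$ for $y\le \gamma$ (immediate from $e^{y}\le 1+y+y^{2}/2$ on $(-\infty,0]$ plus a second-derivative comparison on $[0,\gamma]$) is exactly what I need. For the $(1+\varepsilon)$-moment case I need a bound of the form $e^{y}-1-y\le c(\gamma,\varepsilon)|y|^{1+\varepsilon}$ for $y\le \gamma$, which I would obtain by splitting into $0\le y\le \gamma$ (where the quadratic estimate combines with $y^{2}\le \gamma^{1-\varepsilon}y^{1+\varepsilon}$) and $y\le 0$ (where $e^{y}\le 1$ gives $e^{y}-1-y\le \min(y^{2}/2,|y|)$, and a case split on $|y|\le 1$ versus $|y|>1$ absorbs both sub-regimes into $|y|^{1+\varepsilon}$). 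Then, writing $t=u^{1-\delta}$ and taking expectation of the pointwise bound on the event $\{X\le t\}$, the hypothesis $E(X)=0$ gives $\theta\,E[X\,I(X\le t)]=-\theta\,E[X\,I(X>t)]\le 0$, which I drop, leaving
\[
E[e^{\theta X}\,I(X\le t)] \le P(X\le t)+R,
\]
with $R=(\theta^{2}e^{\gamma}/2)E(X^{2})$ or $R=c(\gamma,\varepsilon)\theta^{1+\varepsilon}E(|X|^{1+\varepsilon})$ respectively.

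Finally, setting $p=P(X>u^{1-\delta})$, assumptions (\ref{IN1})/(\ref{IN2}) combined with Markov's/Chebyshev's inequality guarantee $p\le 1/2$, so that $-\log(1-p)\le p/(1-p)\le 2p$. Chaining $(1+R)\le e^{R}$ and $1/(1-p)\le e^{2p}$ through
\[
E[e^{\theta X}\mid X\le t]=\frac{P(X\le t)+R}{P(X\le t)}\le \frac{1+R}{1-p}\le e^{R}\cdot e^{2p}
\]
yields precisely the intermediate bound in (\ref{INB2}), with the $2P(X>u^{1-\delta})$ summand arising cleanly from $-\log(1-p)\le 2p$; the analogous computation gives (\ref{INB1}). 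A second application of Markov/Chebyshev to the $2p$ term then absorbs it into the main remainder and produces the $A$-forms in (\ref{IN1NV}) and (\ref{IN2FV}). Finally, (\ref{INB3}) follows immediately from the algebraic observation $(1-\delta)(1+\varepsilon)\ge 1$ whenever $0<\delta\le \varepsilon/2$, since then $u^{(1-\delta)(1+\varepsilon)}\ge u$ for $u\ge 1$. I expect the main obstacle to lie in the $(1+\varepsilon)$-moment case: the case split $y\le 0$ with $|y|$ large cannot be handled by a quadratic Taylor remainder (that would invoke $E(X^{2})$, which is not assumed finite), and the estimate $e^{y}-1-y\le |y|$ must be used in that regime; verifying that both sub-regimes knit together into a single clean constant $c(\gamma,\varepsilon)$ matching $\gamma^{2}e^{\gamma}/(2(1-\varepsilon))$ is the only delicate computation.
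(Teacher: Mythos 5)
Your overall architecture is the same as the paper's and it is sound: a pointwise bound on $e^{\theta X}$ over the truncation set, cancellation of the first-order term from $E[X\,I(X\le u^{1-\delta})]=-E[X\,I(X>u^{1-\delta})]\le 0$, and control of the normalizer via $P(X\le u^{1-\delta})\ge 1-p\ge e^{-2p}$ with $p\le 1/2$ guaranteed by (\ref{IN1})/(\ref{IN2}) and Markov/Chebyshev. The finite-variance branch is essentially identical to the paper's proof. The $(1+\varepsilon)$-moment branch, however, is where you take a genuinely different route, and also where you have left a gap.

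The paper does not establish a pointwise inequality $e^{y}-1-y\le c(\gamma,\varepsilon)\lvert y\rvert^{1+\varepsilon}$. Instead it keeps the Taylor remainder in the form $E\bigl[(\theta X)^{2}e^{\theta X}I(X\le u^{1-\delta})\bigr]$ and splits the domain into $\lvert X\rvert\le u^{1-\delta}$ and $X<-u^{1-\delta}$: on the far left tail it uses the elementary bound $z^{2}e^{-z}<1$ for $z\ge 0$ (applied to $z=-\theta X$) so that the contribution is just $P(X<-u^{1-\delta})$, and on the middle range it uses the tail-integral identity
$E\bigl[\lvert X\rvert^{2}I(\lvert X\rvert\le t)\bigr]=2\int_{0}^{t}sP(\lvert X\rvert>s)\,ds\le \frac{E\lvert X\rvert^{1+\varepsilon}}{1-\varepsilon}\,t^{1-\varepsilon}$,
which is where the factor $1/(1-\varepsilon)$ in (\ref{IN1NV}) comes from. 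Your route via a pointwise $c(\gamma,\varepsilon)$ is arguably cleaner, but if you carry it out carefully the best uniform constant you obtain from the split you describe is of order $\max\bigl(1,\tfrac{\gamma^{1-\varepsilon}e^{\gamma}}{2}\bigr)$, which after multiplying by $\theta^{1+\varepsilon}=\gamma^{1+\varepsilon}/u^{(1-\delta)(1+\varepsilon)}$ gives a main coefficient $\max\bigl(\gamma^{1+\varepsilon},\tfrac{\gamma^{2}e^{\gamma}}{2}\bigr)$ rather than $\tfrac{\gamma^{2}e^{\gamma}}{2(1-\varepsilon)}$. For $\gamma\ge 1$ your constant is in fact smaller (better), but for small $\gamma$ the term $\gamma^{1+\varepsilon}$ dominates $\tfrac{\gamma^{2}e^{\gamma}}{2(1-\varepsilon)}$, so the pointwise approach does not reproduce (\ref{IN1NV}) as stated for all $\gamma>0$. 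You flagged exactly this ``knitting'' step as the delicate one and left it open; if your goal is the literal constant in (\ref{IN1NV}) you should instead follow the paper's domain-split with the tail-integral interpolation, whereas if you only need some $A=A(\gamma,\varepsilon)$ of the right shape (which is all the downstream conditions such as (\ref{C_I_2}) require, and the paper later recommends $\gamma=1$ anyway), your version is perfectly adequate once the constant is pinned down. One more minor point: for (\ref{INB3}) the condition you actually use is $(1-\delta)(1+\varepsilon)\ge 1$, equivalently $\delta\le\varepsilon/(1+\varepsilon)$; the lemma's stated hypothesis $\delta\le\varepsilon/2$ is a sufficient (strictly stronger, since $\varepsilon<1$) condition, so your derivation is correct but it is worth noting that the paper's hypothesis is not tight.
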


\bigskip

Having Lemma \ref{lem log MGF bound} at hand we are now ready to prove Lemma %
\ref{lem PB_k over g_k leq 1}

\bigskip

\begin{proof}[Proof of Lemma \protect\ref{lem PB_k over g_k leq 1}]
Since $m\geq 1$ satisfies inequality (\ref{C_m_3}), then we can invoke Lemma %
\ref{lem log MGF bound} with $u=n_{k-1}\mu +m=C_{k}$ and obtain%
\begin{equation}
\exp (\psi _{k}\left( \theta _{k}\right) )\leq \exp \left( \frac{\gamma
^{2}\exp \left( \gamma \right) E(X^{2})}{2C_{k}^{2\left( 1-\delta \right) }}%
+2P\left( X>C_{k}^{1-\delta }\right) \right) .
\label{eq bound log mgf exponantially}
\end{equation}%
By definition of $T_{m}$ we have that $S_{T_{m}}\geq \mu T_{m}+m$, and
because $T_{m}\in \left[ n_{k-1},n_{k}-1\right] $ we conclude that%
\[
S_{T_{m}}\geq \mu n_{k-1}+m=C_{k}.
\]%
Therefore, on $T_{m}\in \left[ n_{k-1},n_{k}-1\right] $
\begin{equation}
\exp (-\theta _{k}S_{T_{m}}+T_{m}\psi _{k}\left( \theta _{k}\right) )\leq
\exp (-\theta _{k}C_{k}+n_{k}\psi _{k}\left( \theta _{k}\right) ).
\label{eq_b_lik_ratio}
\end{equation}%
Combining (\ref{eq bound log mgf exponantially}) and (\ref{eq_b_lik_ratio}),
and letting $z=\mu n_{k-1}$, we obtain that%
\begin{eqnarray*}
&&\exp (-\theta _{k}S_{T_{m}}+T_{m}\psi _{k}\left( \theta _{k}\right) ) \\
&\leq &\exp \left( -\gamma \left( \mu n_{k-1}+m\right) ^{\delta }+\frac{%
\gamma ^{2}\exp \left( \gamma \right) E(X^{2})n_{k-1}}{\left( \mu
n_{k-1}+m\right) ^{2(1-\delta )}}+2n_{k}P\left( X>\left( \mu
n_{k-1}+m\right) ^{(1-\delta )}\right) \right) \\
&=&\exp \left( -\gamma \left( z+m\right) ^{\delta }+\frac{\gamma ^{2}\exp
\left( \gamma \right) E(X^{2})z}{\left( z+m\right) ^{2(1-\delta )}\mu }+4%
\frac{z}{\mu }P\left( X>\left( z+m\right) ^{(1-\delta )}\right) \right) .
\end{eqnarray*}%
So, using (\ref{C_I_2}) we conclude that
\begin{eqnarray*}
&&\frac{3\exp (-\theta _{k}S_{T_{m}}+T_{m}\psi _{k}\left( \theta _{k}\right)
)}{g\left( k\right) } \\
&\leq &\frac{3\left(1+2z+m\right) ^{\alpha }}{\left( \alpha -1\right) \left(
m+1\right) ^{\alpha -1}z}\exp \left( -\gamma \left( z+m\right) ^{\delta }+%
\frac{\gamma ^{2}\exp \left( \gamma \right) E(X^{2})z}{\left( z+m\right)
^{2(1-\delta )}\mu }+4\frac{z}{\mu }P\left( X>\left( z+m\right) ^{(1-\delta
)}\right) \right) \leq 1,
\end{eqnarray*}%
thereby obtaining the result.
\end{proof}

\bigskip

We conclude this appendix with the proof of the auxiliary lemma.

\bigskip

\begin{proof}[Proof of Lemma \ref{lem log MGF bound}]
Since $EX=0$, $E[XI\left( X\leq u^{1-\delta }\right) ]<0$, and therefore a
Taylor expansion of second order yields
\[
E\left[ \exp \left\{ X\frac{\gamma }{u^{1-\delta }}\right\} ,X\leq
u^{1-\delta }\right] \leq 1+\frac{\gamma ^{2}}{2}E\left[ \left( \frac{X}{%
u^{1-\delta }}\right) ^{2}\exp \left\{ \frac{\gamma X}{u^{1-\delta }}%
\right\} I\left( X\leq u^{1-\delta }\right) \right]
\]%
If $EX^{2}<\infty $, we conclude that
\[
E\left[ \exp \left\{ X\frac{\gamma }{u^{1-\delta }}\right\} ,X\leq
u^{1-\delta }\right] \leq 1+\frac{\gamma ^{2}\exp \left( \gamma \right) }{2}%
\cdot E(X^{2})\cdot \frac{1}{u^{2\left( 1-\delta \right) }}.
\]%
Since $1+x\leq \exp \left( x\right) $ for $x\geq 0$ we conclude that
\[
E\left[ \exp \left\{ X\frac{\gamma }{u^{1-\delta }}\right\} ,X\leq
u^{1-\delta }\right] \leq \exp \left( \frac{\gamma ^{2}\exp \left( \gamma
\right) }{2}\cdot E(X^{2})\cdot \frac{1}{u^{2\left( 1-\delta \right) }}%
\right) .
\]%
On the other hand%
\[
P\left( X\leq u^{1-\delta }\right) =1-P\left( X>u^{1-\delta }\right) \geq 1-%
\frac{E(X^{2})}{u^{2\left( 1-\delta \right) }}
\]%
and since $1-x\geq \exp \left( -2x\right) $ for $x\in \left( 0,1/2\right) $
we conclude that if (\ref{IN2}) holds then%
\[
E\left[ \exp \left\{ X\frac{\gamma }{u^{1-\delta }}\right\} \;|\;X\leq
u^{1-\delta }\right] \leq \exp \left( \frac{\gamma ^{2}\exp \left( \gamma
\right) E(X^{2})}{2u^{2\left( 1-\delta \right) }}+2P\left( X>u^{1-\delta
}\right) \right) ,
\]%
which yields (\ref{INB2}). Now, let's assume that $\varepsilon \in \left(
0,1\right) $ and $E\left\vert X\right\vert ^{1+\varepsilon }<\infty $. Since
$z^{2}\exp \left( -z\right) \leq 4\exp \left( -2\right) <1$ for $z\geq 0$ we
have that
\[
E\left[ \left( \frac{X\gamma }{u^{1-\delta }}\right) ^{2}\exp \left\{ \frac{%
X\gamma }{u^{1-\delta }}\right\} I\left( X\leq u^{1-\delta }\right) \right]
\leq \gamma ^{2}\exp \left( \gamma \right) E\left[ \left( \frac{X}{%
u^{1-\delta }}\right) ^{2}I\left( \left\vert X\right\vert \leq u^{1-\delta
}\right) \right] +P\left( X<-u^{1-\delta }\right) .
\]%
In addition,
\[
E\left[ \left\vert X\right\vert ^{2}I\left( \left\vert X\right\vert \leq
u^{1-\delta }\right) \right] =2E\left[ \int\limits_{0}^{u^{1-\delta
}}sI\left( \left\vert X\right\vert >s\right) ds\right] =2\int%
\limits_{0}^{u^{1-\delta }}sP\left( \left\vert X\right\vert >s\right) ds\leq
\frac{E\left\vert X\right\vert ^{1+\varepsilon }}{1-\varepsilon }%
u^{(1-\varepsilon )\left( 1-\delta \right) }\label{B1a}
\]%
Therefore,
\[
E\left[ \left( \frac{X}{u^{1-\delta }}\right) ^{2}I\left( \left\vert
X\right\vert \leq u^{1-\delta }\right) \right] \leq \frac{E\left\vert
X\right\vert ^{1+\varepsilon }}{1-\varepsilon }\cdot \frac{1}{u^{\left(
1+\varepsilon \right) \left( 1-\delta \right) }}.
\]%
Since%
\[
P\left( X<-u^{1-\delta }\right) \leq \frac{E\left\vert X\right\vert
^{1+\varepsilon }}{u^{\left( 1+\varepsilon \right) \left( 1-\delta \right) }}%
,\label{B1b}
\]%
we conclude combining (\ref{B1a})\ and (\ref{B1b}) that%
\begin{equation}
\begin{array}{ll}
E\left[ \exp \left\{ X\frac{\gamma }{u^{1-\delta }}\right\} ,X\leq
u^{1-\delta }\right] & \leq 1+\frac{\gamma ^{2}}{2}\cdot E\left\vert
X\right\vert ^{1+\varepsilon }\cdot \left( \frac{\exp \left( \gamma \right)
}{\left( 1-\varepsilon \right) }+1\right) \cdot \frac{1}{u^{\left(
1+\varepsilon \right) \left( 1-\delta \right) }} \\
& \leq 1+\gamma ^{2}\cdot E\left\vert X\right\vert ^{1+\varepsilon }\cdot
\frac{\exp \left( \gamma \right) }{\left( 1-\varepsilon \right) }\cdot \frac{%
1}{u^{\left( 1+\varepsilon \right) \left( 1-\delta \right) }}.%
\end{array}%
\end{equation}%
Similarly to the finite variance case we conclude that if (\ref{IN2}) holds,
then
\[
E\left[ \exp \left\{ X\frac{\gamma }{u^{1-\delta }}\right\} \;|\;X\leq
u^{1-\delta }\right] \leq \exp \left( \gamma ^{2}\cdot E\left\vert
X\right\vert ^{1+\varepsilon }\cdot \frac{\exp \left( \gamma \right) }{%
\left( 1-\varepsilon \right) }\cdot \frac{1}{u^{\left( 1+\varepsilon \right)
\left( 1-\delta \right) }}+2E\left\vert X\right\vert ^{1+\varepsilon }\cdot
\frac{1}{u^{\left( 1+\varepsilon \right) \left( 1-\delta \right) }}\right) ,
\]%
which in turn yields (\ref{INB1}). The last part of the result, namely (\ref%
{INB3}) follows from elementary algebra and the fact that we are requiring $%
u\geq 1$.
\end{proof}

\section{Proof of Lemma \ref{lem 3PB_k^c over g_k leq 1}}

\label{appndx pf lem bound B_k ^c}

\begin{proof}
Notice that
\[
\begin{array}{lll}
P\left( B_{k}^{c}\right) & \leq & \sum\limits_{j=n_{k-1}}^{n_{k}-1}P\left(
X_{j}>\left( j\mu+m\right) ^{1-\delta}\right) \\
& \leq & n_{k}P\left( X_{1}>\left( n_{k-1}\mu+m\right) ^{1-\delta}\right) .%
\end{array}
\]
Now we can continue and apply the same arguments as in Lemma \ref{lem 3PA_k
over g_k leq 1} to conclude the proof.
\end{proof}

\newpage
\bibliographystyle{plain}
\bibliography{Blanchet_Wallwater_Exact_Sampling_ref}

\begin{thebibliography}{10}

\bibitem{As03}
S.~Asmussen.
\newblock {\em Applied Probability and Queues, 2nd ed.}
\newblock Springer, New York, 2003.

\bibitem{BC11}
J.~Blanchet and X.~Chen.
\newblock Steady-state simulation for reflected {B}rownian motion and related
  networks.
\newblock {\em http://arxiv.org/pdf/1202.2062.pdf}, 2012.

\bibitem{BD12}
J.~Blanchet and J.~Dong.
\newblock Sampling point processes on stable unbounded regions and exact
  simulation of queues.
\newblock In {\em Proceedings of the 2012 Winter Simulation Conference C.
  Laroque, J. Himmelspach, R. Pasupathy, O. Rose, and A. M. Uhrmacher, eds},
  2012.

\bibitem{BS11}
J.~Blanchet and K.~Sigman.
\newblock On exact sampling of stochastic perpetuities.
\newblock {\em Journal of Applied Probability}, 48A:165--182, 2011.

\bibitem{CK14}
S.~Connor and S.~Kendall.
\newblock Perfect simulation of {M/G/c} queues.
\newblock {\em http://arxiv.org/abs/1402.7248}, 2014.

\bibitem{Durrett}
R.~Durrett.
\newblock {\em Probability: Theory and Examples}.
\newblock Duxbury Advanced Series, 2005.

\bibitem{EG00}
K.~B. Ensor and P.~W. Glynn.
\newblock Simulating the maximum of a random walk.
\newblock {\em Journal of Statistical Planning and Inference}, 85:127--135,
  2000.

\bibitem{FS06}
S.~Foss and A.~Sapozhnikov.
\newblock Convergence rates in monotone separable stochastic networks.
\newblock {\em Queueing Systems}, 52(2):125--137, 2006.

\bibitem{BDP15}
J.~Dong J.~Blanchet and Y.~Pei.
\newblock Perfect sampling of {GI/G/c} queues.
\newblock {\em Submitted}, 2015.

\bibitem{Ken98}
W.~S. Kendall.
\newblock Perfect simulation for the area-interaction point process.
\newblock In {\em Probability Towards 2000 (ed. L. Accardi and C.C. Heyde).
  Lecture Notes in Statistics}. volume 128, New York; Springer-Verlag, 218-234,
  1998.

\bibitem{KA61}
J.~F.~C. {Kingman} and M.~F. {Atiyah}.
\newblock {The single server queue in heavy traffic}.
\newblock {\em Proceedings of the Cambridge Philosophical Society}, 57:902,
  1961.

\bibitem{MJB13}
K.~Murthy, S.~Juneja, and J.~Blanchet.
\newblock State-independent importance sampling for random walks with regularly
  varying increments.
\newblock {\em http://arxiv.org/pdf/1206.3390.pdf}, 2013.

\bibitem{PW96}
J.~G. Propp and D.~B. Wilson.
\newblock Exact sampling with coupled {M}arkov chains and applications to
  statistical mechanics.
\newblock {\em Random Structures \& Algorithms (Atlanta, Georgia: Proceedings
  of the Seventh International Conference on Random Structures and
  Algorithms)}, 9:223--252, 1996.

\bibitem{Si11a}
K.~Sigman.
\newblock Exact simulation of the stationary distribution of the {FIFO M/G/c}
  queue.
\newblock {\em Journal of Applied Probability}, 48A:209--216, 2011.

\bibitem{Si11b}
K.~Sigman.
\newblock Exact simulation of the stationary distribution of the {FIFO M/G/c}
  queue: The general case for $\rho < c$.
\newblock {\em Queueing Systems}, 70:37--43, 2012.

\end{thebibliography}



@BOOK{As03,
  title = {Applied Probability and Queues, 2nd ed.},
  publisher = {Springer, New York},
  year = {2003},
  editor = {?},
  author = {S. Asmussen},
  owner = {Aya Wallwater},
  timestamp = {2012.07.25}
}

@BOOK{AG07,
  title = {Stochastic Simulation: Algorithms and Analysis},
  publisher = {Springer-Verlag},
  year = {2007},
  author = {S. Asmussen and P. W. Glynn},
  owner = {Aya Wallwater},
  timestamp = {2012.08.01}
}

@ARTICLE{BC11,
  author = {J. Blanchet and X. Chen},
  title = {Steady-state Simulation for Reflected {B}rownian Motion and Related
	Networks},
  journal = {http://arxiv.org/pdf/1202.2062.pdf},
  year = {2012},
  owner = {Aya Wallwater},
  timestamp = {2012.07.18}
}

@CONFERENCE{BD12,
  author = {J. Blanchet and J. Dong},
  title = {Sampling Point Processes on Stable Unbounded Regions and Exact Simulation
	of Queues},
  booktitle = {Proceedings of the 2012 Winter Simulation Conference C. Laroque,
	J. Himmelspach, R. Pasupathy, O. Rose, and A. M. Uhrmacher, eds},
  year = {2012},
  owner = {Aya Wallwater},
  timestamp = {2012.07.18}
}

@ARTICLE{BS11,
  author = {J. Blanchet and K. Sigman},
  title = {On Exact Sampling of Stochastic Perpetuities},
  journal = {Journal of Applied Probability},
  year = {2011},
  volume = {48A},
  pages = {165-182},
  owner = {Aya Wallwater},
  timestamp = {2012.07.18}
}

@ARTICLE{CK14,
  author = {Connor, S. and Kendall, S.},
  title = {Perfect Simulation of {M/G/c} Queues},
  journal = {http://arxiv.org/abs/1402.7248},
  year = {2014},
  owner = {Aya Wallwater},
  timestamp = {2012.07.18}
}

@ARTICLE{Duf90,
  author = {D. Dufresne},
  title = {The distribution of a perpetuity, with applications to risk theory
	and pension funding},
  journal = {Scandinavian Actuarial Journal},
  year = {1990},
  volume = {1990},
  pages = {39-79},
  owner = {Aya Wallwater},
  timestamp = {2012.07.30}
}

@BOOK{Durrett,
  title = {Probability: Theory and Examples},
  publisher = {Duxbury Advanced Series},
  year = {2005},
  author = {Durrett, R.},
  owner = {Aya Wallwater},
  timestamp = {2012.08.01}
}

@ARTICLE{EG00,
  author = {K. B. Ensor and P. W. Glynn},
  title = {Simulating the Maximum of a Random Walk},
  journal = {Journal of Statistical Planning and Inference},
  year = {2000},
  volume = {85},
  pages = {127-135},
  owner = {Aya Wallwater},
  timestamp = {2012.07.18}
}

@ARTICLE{FS06,
  author = {Foss, S. and Sapozhnikov, A.},
  title = {Convergence rates in monotone separable stochastic networks},
  journal = {Queueing Systems},
  year = {2006},
  volume = {52},
  pages = {125-137},
  number = {2},
  doi = {10.1007/s11134-006-4261-z},
  issn = {0257-0130},
  keywords = {Convergence rates; Moments; Coupling; Renovating events; Harris ergodic
	Markov chain; Monotone separable network; Generalized Jackson network;
	Multiserver queue},
  language = {English},
  publisher = {Kluwer Academic Publishers},
  url = {http://dx.doi.org/10.1007/s11134-006-4261-z}
}

@ARTICLE{Har77,
  author = {J. M. Harrison},
  title = {Ruin problems with compounding assets},
  journal = {Stochastic Processes and their Applications},
  year = {1977},
  volume = {5},
  pages = {67-79},
  owner = {Aya Wallwater},
  timestamp = {2012.07.30}
}

@ARTICLE{BDP15,
  author = {J. Blanchet, J. Dong and Y. Pei},
  title = {Perfect Sampling of {GI/G/c} Queues},
  journal = {Submitted},
  year = {2015},
  owner = {Aya Wallwater},
  timestamp = {2012.07.18}
}

@ARTICLE{LKPR11,
  author = {K. Latuszynski, I. Kosmidis, O. Papaspiliopoulos and G. O. Roberts},
  title = {Simulating Events of Unknown Probabilities via Reverse Time Martingales},
  journal = {Random Structures and Algorithms},
  year = {2011},
  volume = {38},
  pages = {441-452},
  owner = {Aya Wallwater},
  timestamp = {2012.07.23}
}

@ARTICLE{KO94,
  author = {M. S. Keane and G. L. O'Brien},
  title = {A {B}ernoulli factory},
  journal = {ACM Transactions on Modeling and Computer Simulation},
  year = {1994},
  volume = {4},
  pages = {213-219},
  owner = {Aya Wallwater},
  timestamp = {2012.08.01}
}

@ARTICLE{Kel96,
  author = {O. Kella},
  title = {Stability and non-product form of stochastic fluid networks with
	Levy inputs},
  journal = {The Annals of Applied Probability},
  year = {1996},
  volume = {6},
  pages = {186-199},
  owner = {Aya Wallwater},
  timestamp = {2012.07.18}
}

@ARTICLE{KW96,
  author = {O. Kella and W. Whitt},
  title = {Stability and structural properties of stochastic fluid networks},
  journal = {Journal of Applied Probability},
  year = {1996},
  volume = {33},
  pages = {1169-1180},
  owner = {Aya Wallwater},
  timestamp = {2012.07.18}
}

@INCOLLECTION{Ken98,
  author = {W. S. Kendall},
  title = {Perfect simulation for the area-interaction point process},
  booktitle = {Probability Towards 2000 (ed. L. Accardi and C.C. Heyde). Lecture
	Notes in Statistics},
  publisher = {volume 128, New York; Springer-Verlag, 218-234},
  year = {1998},
  owner = {Aya Wallwater},
  timestamp = {2012.07.18}
}

@ARTICLE{KA61,
  author = {{Kingman}, J.~F.~C. and {Atiyah}, M.~F.},
  title = {{The single server queue in heavy traffic}},
  journal = {Proceedings of the Cambridge Philosophical Society},
  year = {1961},
  volume = {57},
  pages = {902},
  adsnote = {Provided by the SAO/NASA Astrophysics Data System},
  adsurl = {http://adsabs.harvard.edu/abs/1961PCPS...57..902K},
  doi = {10.1017/S0305004100036094}
}

@ARTICLE{MJB13,
  author = {K. Murthy and S. Juneja and J. Blanchet},
  title = {State-independent Importance Sampling for Random Walks with Regularly
	Varying Increments},
  journal = {http://arxiv.org/pdf/1206.3390.pdf},
  year = {2013},
  owner = {Aya Wallwater},
  timestamp = {2012.07.18}
}

@ARTICLE{NP05,
  author = {S. Nacu and Y. Peres},
  title = {Fast simulation of new coins from old},
  journal = {Annals of Applied Probability},
  year = {2005},
  volume = {15},
  pages = {93-115},
  owner = {Aya Wallwater},
  timestamp = {2012.07.23}
}

@ARTICLE{Nyr01,
  author = {H. Nyrhinen},
  title = {Finite and infinite time ruin probabilities in a stochastic economic
	environment},
  journal = {Stochastic Processes and their Applications},
  year = {2001},
  volume = {92},
  pages = {265-285},
  owner = {Aya Wallwater},
  timestamp = {2012.07.30}
}

@INCOLLECTION{CPY01,
  author = {P. Carmona, F. Petit and M. Yor},
  title = {Explonential functionals of {L}evy processes},
  booktitle = {Levy Processes},
  publisher = {Birkhauser, Boston, MA},
  year = {2001},
  owner = {Aya Wallwater},
  timestamp = {2012.07.30}
}

@ARTICLE{Pau98,
  author = {J. Paulsen},
  title = {Sharp conditions for certain ruin in a risk process with stochastic
	return on investments},
  journal = {Stochastic Processes and their Applications},
  year = {1998},
  volume = {75},
  pages = {135-148},
  owner = {Aya Wallwater},
  timestamp = {2012.07.30}
}

@ARTICLE{PW96,
  author = {J. G. Propp and D. B. Wilson},
  title = {Exact sampling with coupled {M}arkov chains and applications to statistical
	mechanics},
  journal = {Random Structures \& Algorithms (Atlanta, Georgia: Proceedings of
	the Seventh International Conference on Random Structures and Algorithms)},
  year = {1996},
  volume = {9},
  pages = {223-252},
  owner = {Aya Wallwater},
  timestamp = {2012.07.18}
}

@ARTICLE{AGH92,
  author = {S. Asmussen, P. Glynn and H. Thorison},
  title = {Stationarity Detection in the Initial Transient Problem},
  journal = {ACM Transactions�on�Modeling�and Computer�Simulation},
  year = {1992},
  volume = {2},
  pages = {???},
  owner = {Aya Wallwater},
  timestamp = {2012.07.18}
}

@ARTICLE{AFK03,
  author = {S. Asmussen, S. Foss, and D. Korshunov},
  title = {Asymptotics for sums of random variables with local subexponential
	behaviour},
  journal = {Journal of Theoretical Probability},
  year = {2003},
  volume = {16},
  pages = {489-518},
  owner = {Aya Wallwater},
  timestamp = {2012.07.18}
}

@ARTICLE{FTC98,
  author = {S.G. Foss, R.L. Tweedie and J.N. Corcoran},
  title = {Simulating the invariant measures of {M}arkov chains using backward
	coupling at regeneration times},
  journal = {Probability in the Engineering and Informational Sciences},
  year = {1998},
  volume = {12},
  pages = {303-320},
  owner = {Aya Wallwater},
  timestamp = {2012.07.18}
}

@ARTICLE{Si11b,
  author = {K. Sigman},
  title = {Exact simulation of the stationary distribution of the {FIFO M/G/c}
	queue: The general case for $\rho < c$},
  journal = {Queueing Systems},
  year = {2012},
  volume = {70},
  pages = {37-43},
  owner = {Aya Wallwater},
  timestamp = {2012.07.23}
}

@ARTICLE{Si11a,
  author = {K. Sigman},
  title = {Exact simulation of the stationary distribution of the {FIFO M/G/c}
	queue},
  journal = {Journal of Applied Probability},
  year = {2011},
  volume = {48A},
  pages = {209-216},
  owner = {Aya Wallwater},
  timestamp = {2012.07.23}
}

@ARTICLE{TC00,
  author = {R. Tweedie and J. Corcoran},
  title = {Perfect Sampling of Ergodic {H}arris Chains},
  journal = {Annals of Applied Probability},
  year = {2000},
  volume = {11},
  pages = {438-451},
  owner = {Aya Wallwater},
  timestamp = {2012.07.18}
}

@comment{jabref-meta: selector_keywords:}

@comment{jabref-meta: selector_journal:}

@comment{jabref-meta: selector_publisher:}

@comment{jabref-meta: selector_author:}

@comment{jabref-meta: selector_review:}

\end{document}